\def\X{{\mathcal{X}}}
\def\L{{\mathcal{L}}}
\def\d{{\rm{d}}}
\let\cal=\mathcal
\def\N{{\mathbb N}}
\def\R{{\mathbb R}}
\newtheorem{lemma}{Lemma}[section]
\newtheorem{theorem}[lemma]{Theorem}
\newtheorem{remark}[lemma]{Remark}
\newtheorem{prop}[lemma]{Proposition}
\newtheorem{definition}[lemma]{Definition}
\numberwithin{equation}{section}
\begin{document}

\title[recurrent solutions for SPDEs]
{Periodic, almost periodic and almost automorphic solutions for SPDEs with monotone coefficients}

\author{mengyu cheng}
\address{M. Cheng: School of Mathematical Sciences,
Dalian University of Technology, Dalian 116024, P. R. China}
\email{mengyucheng@mail.dlut.edu.cn}

\author{zhenxin liu}
\address{Z. Liu (Corresponding author): School of Mathematical Sciences,
Dalian University of Technology, Dalian 116024, P. R. China}
\email{zxliu@dlut.edu.cn}

\date{November 5, 2019}
\subjclass[2010]{60H15, 37L15, 35B10, 35B15, 35B40, 35B35.}
\keywords{Stochastic partial differential equations; Periodic solutions; Almost periodic solutions;
Almost automorphic solutions; Asymptotically stable;
Stochastic reaction diffusion equations; Stochastic porous media equations. }

\begin{abstract}
In this paper, we use the variational approach to investigate recurrent properties of solutions
for stochastic partial differential equations, which is in contrast to the previous semigroup framework.
Consider stochastic differential equations with monotone coefficients.
Firstly, we establish the continuous dependence on initial values and coefficients for solutions.
Secondly, we prove the existence of recurrent solutions, which
include periodic, almost periodic and almost automorphic solutions.
Then we show that these recurrent solutions are globally
asymptotically stable in square-mean sense. Finally,
for illustration of our results we give two applications, i.e. stochastic
reaction diffusion equations and stochastic porous media equations.
\end{abstract}

\maketitle

\section{Introduction}

Recurrence is an important concept in dynamical systems, which roughly means that a motion returns infinitely often
to any small neighborhood of the initial position. The recurrent phenomenon has been found in
almost all interesting systems, so it has attracted wide attention. This paper is devoted to studying the recurrence
of solutions for stochastic partial differential equations with monotone coefficients.
The types of recurrent solutions we investigate in present paper include periodic,
almost periodic and almost automorphic solutions.

The analysis of recurrent solutions to ordinary differential equations dates back to
Poincar\'e who studied periodic solutions of the three-body problem.
Later, the notion of almost periodic functions was proposed and comprehensively studied
by Bohr \cite{Bohr1924, Bohr1925, Bohr1926}. Many interesting results were acquired
in this subject; see, for example, Bochner \cite{Boch1927, Boch1962}, von Neumann \cite{vonN}
and van Kampen \cite{vanK}. After that, it was found that many differential equations,
especially equations arising from physics, possess almost periodic solutions.
So extensive investigations concerning almost periodic solutions for differential equations were conducted,
following Favard's pioneering work \cite{Fava1, Fava2};
see e.g. Amerio and Prouse \cite{AP}, Fink \cite{Fink}, Yoshizawa \cite{Yosh}, Sacker and Sell \cite{SS},
Levitan and Zhikov \cite{LZ} for a survey.
Subsequently, almost periodicity was further generalized to almost
automorphy by Bochner \cite {Boch1955}. Veech \cite{Veech1965, Veech1977}, Johnson \cite{Joh},
Shen and Yi \cite{SY}, N'Gu\'er\'ekata \cite{GM}
et al studied properties of almost automorphic functions and this kind of solutions for differential equations.

Random factors may have significant impacts on the dynamics,
so a natural question is: will recurrent phenomenon still persist when equations are perturbed by noise?
Some works have been done to prove the existence of recurrent solutions for stochastic differential
equations in both finite and infinite dimensions. For finite dimensional case, among many
other works, we mention the following which are closely related to our work.
Khasminskii \cite{Khas} investigated periodic solutions for stochastic ordinary differential equations
by Lyapunov's second method.
The existence of periodic and almost periodic solutions to
affine stochastic equations were proved by Halanay \cite{Hala}, Morozan and Tudor \cite{MT}, Arnold and Tudor \cite{AT}.
Zhao and Zheng \cite{ZZ} showed that there exist pathwise random periodic solutions to
stochastic differential equations. Liu and Wang \cite{LW} reported the existence of almost periodic solutions for
stochastic differential equations by the Favard separation method.
For infinite dimensional case, Da Prato and Tudor \cite{PT} provided the existence of periodic and almost periodic
solutions of semilinear stochastic partial differential equations.
Later, studies of periodic, almost periodic and almost automorphic solutions to
semilinear stochastic differential equations were performed
by  Bezandry and Diagana \cite{BD}, Fu and Liu \cite{FL}, Wang and Liu \cite{WL},
Chen and Lin \cite{CL}, Liu and Sun \cite{LS}, Gao \cite{GaoP}, Cheban and Liu \cite{CL1}, among others.
Note that the almost periodic/automorphic solution in \cite{BD,FL} should be in distribution sense instead of
square-mean sense, see \cite{KMR,LS} for details. It is known that the distribution of solutions
for a stochastic differential equation satisfy the corresponding
Fokker-Planck equation, so we can also study recurrent solutions through the associated Fokker-Planck equation.
See the very recent works of  Chen et al \cite{CHLY} and Ji et al \cite{JQSY} on periodic solutions to Fokker-Planck equations.

Despite considerable advances in this direction, as far as we know there is no research so far on recurrent solutions
to stochastic partial differential equations with monotone coefficients.
Note that if the equation is no longer assumed to be semilinear, it will arise that
the semigroup approach does not work any more. So a natural question is: can we still obtain recurrent
solutions for stochastic partial differential equations which are not of the semilinear form?
One of our main motivations is to partly answer this question. To this end, we adopt in this paper
the variational approach which is sometimes called monotone method, to study recurrent solutions for stochastic partial
differential equations.

Variational approach is one of basic approaches to analyze nonlinear deterministic/stochastic partial differential equations.
For deterministic partial differential equations, the approach originated from the pioneering works of
Lions \cite{Lions} and Agmon \cite{Agmon}. For stochastic partial differential equations,
the first work was done by Pardoux \cite{Pard} who proved the existence of strong solutions for
linear stochastic partial differential equations, which was based on Lions \cite{Lions}.
Subsequently, Krylov and Rozovskii \cite{KryRoz} further developed this approach to nonlinear equations
with continuous martingales as integrators. Fairly rigorous and complete description
in a slightly general form was provided by Pr\'ev\^ot and R\"ockner \cite{PR}; see also \cite{LGVM,LR}.

Now let us state the framework and our main results more precisely. Let
$(H,\langle~,~\rangle_{H})$ be a separable Hilbert space and  $H^{*}$ the dual space of $H$.
As in Zhang \cite{ZhangXC}, we assume that for each $i=1,2$, $(V_{i},\|\cdot\|_{V_{i}})$
is a reflexive Banach space such that
$V_{i}\subset H$ continuously and densely. Then we get two Gelfand triples
\[
V_{1}\subset H\subset V_{1}^{*},\quad V_{2}\subset H\subset V_{2}^{*}.
\]
Consider the following stochastic differential equation on $H$
\begin{equation}\label{maineq}
 \d X(t)=A(t,X(t))\d t +B(t,X(t))\d W(t),
\end{equation}
where $A:=A_{1}+A_{2}$, $A_{i}:\R\times V_{i}\rightarrow V_{i}^{*}$, $i=1,2$ and
$B:\R\times V\rightarrow L_{2}(U,H)$ satisfy hemicontinuous, monotone, coercive,
bounded conditions (see Section 2 for details).
Here $W(t)$, $t\in\R$ is a two-sided cylindrical Winner process on another separable Hilbert space
$(U,\langle~,~\rangle_{U})$.
Under these conditions, the existence and uniqueness of solutions
to equation \eqref{maineq} was established in \cite{ZhangXC}.
In the present paper, we first prove that the solutions of \eqref{maineq}
depend continuously on the initial value and the coefficients $A,B$, which is
useful to study qualitatively stochastic equations.
We next show that \eqref{maineq} admits a unique $L^{2}$-bounded solution when
the coefficients $A$ and $B$ satisfy some coercive and monotone conditions. Furthermore,
with the help of continuous dependence property for solutions,
we establish the recurrent properties in distribution sense for this unique $L^{2}$-bounded solution.
Indeed, it shares the same recurrent properties with the coefficients $A$ and $B$; that is,
when $A$ and $B$ are stationary (respectively, periodic, almost periodic, almost automorphic),
then so is the $L^{2}$-bounded solution in distribution sense. Then we show that this unique recurrent
(and bounded) solution is globally asymptotically stable in square-mean sense. This asymptotic stability property
is very similar to the ergodicity of homogeneous Markov processes; note that the coefficients $A$ and $B$ depend on
$t$, so equation \eqref{maineq} generates an inhomogeneous Markov process. Finally,
to illustrate the theoretical results obtained above, we discuss two examples, i.e. stochastic reaction
diffusion equations and stochastic porous media equations.

The remainder of this paper is organized as follows.
Section 2 gives some definitions and properties of recurrent functions
as well as a rough introduction to variational approach.
In Section 3, we obtain continuous dependence on initial values and coefficients for solutions
of \eqref{maineq}, and prove that \eqref{maineq} admits a unique $L^{2}$-bounded solution under suitable conditions.
In Section 4, we show that the $L^{2}$-bounded solution has the same recurrent properties as the coefficients.
In Section 5, we consider the additive noise case. In this situation, the strictly monotone condition
can be weakened to the strong monotone condition. Section 6 discusses global asymptotic
stability of the $L^{2}$-bounded solution. In the last section, we give two applications.

\section{Preliminaries}

Before turning to our results, we first give some preliminaries.
Let $(\cal{X},d)$ be a complete metric space. We write $C(\R,\cal{X})$ to mean the space of
all continuous functions $\varphi:\R\rightarrow\cal{X}$.

\subsection{Recurrent functions.}

Let us now recall some types of recurrent functions to be studied in this paper.

\begin{definition}\rm\label{tperiod}
We say $\varphi\in C(\mathbb R,\cal{X})$ is {\em T-periodic}, if there exists some nonzero constant
$T\in\R$ such that $\varphi(t+T)=\varphi(t)$ for all $t\in\R$. In particular,
$\varphi\in C(\mathbb R,\cal{X})$ is called {\em stationary}  provided
$\varphi(t)=\varphi(0)$ for all $t\in\R$.
\end{definition}

\begin{definition} \rm\label{Bohrap}
We say $\varphi \in C(\mathbb R,\cal{X})$ is {\em Bohr almost periodic} if the set $\mathcal T(\varphi,\varepsilon)$
of $\varepsilon$-almost periods of $\varphi$ is relatively dense for each $\varepsilon >0$, i.e. there exists
a constant $l=l(\varepsilon)>0$ such that
$\mathcal T(\varphi,\varepsilon)\cap [a,a+l]\not=\emptyset$ for all $a\in\mathbb R$, where
$$\mathcal T(\varphi,\varepsilon):=\{\tau\in\R:\sup\limits_{t\in\R}d(\varphi(t+\tau),\varphi(t))<\varepsilon\}.$$
\end{definition}

Let $\varphi$ be a mapping from $\mathbb{R}$ to $\cal{X}$. We employ $\gamma$ to denote a sequence
$\{\gamma_{n}\}:=\{\gamma_{n}\}_{n=1}^{\infty}$ in $\R$. Denote
$(T_{\gamma}\varphi)(\cdot):=\lim\limits_{n\rightarrow\infty}\varphi(\cdot+\gamma_{n})$, provided the limit exists.
The mode of convergence will be pointed out when this symbol is used.
Recall the following characterization of almost periodicity that is due to
Bochner \cite{Boch1927}.

\begin{definition}\rm
We say $\varphi\in C(\mathbb{R},\cal{X})$ is {\em Bochner almost periodic},
if for any sequence $\gamma'=\{\gamma'_{n}\}\subset\mathbb{R}$ there exists a subsequence
$\gamma=\{\gamma_{n}\}\subset\gamma'$ such that $T_{\gamma}\varphi$ exists uniformly on $\R$.
\end{definition}

\begin{theorem}[Bochner]\label{BochBohr}
Assume that $\varphi:\R\rightarrow \X$ is continuous. Then the following statements are equivalent.
\begin{enumerate}
  \item $\varphi$ is Bohr almost periodic.
  \item $\varphi$ is Bochner almost periodic.
  \item For any two sequences $\gamma'=\{\gamma'_{n}\}\subset\mathbb R$ and
         $\beta'=\{\beta'_{n}\}\subset\mathbb R$ there exist two subsequences $\gamma=\{\gamma_{n}\}\subset\gamma'$
         and $\beta=\{\beta_{n}\}\subset\beta'$ with the same indexes such that
         $$T_{\gamma+\beta}\varphi=T_{\gamma}T_{\beta}\varphi$$
        uniformly on $\mathbb R$.
  \item  For any two sequences $\gamma'=\{\gamma'_{n}\}\subset\mathbb R$ and
         $\beta'=\{\beta'_{n}\}\subset\mathbb R$ there exist two subsequences $\gamma=\{\gamma_{n}\}\subset\gamma'$
         and $\beta=\{\beta_{n}\}\subset\beta'$ with the same indexes such that
         $$T_{\gamma+\beta}\varphi=T_{\gamma}T_{\beta}\varphi$$
         pointwise.
\end{enumerate}
\end{theorem}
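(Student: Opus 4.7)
The plan is to close the cycle (i) $\Leftrightarrow$ (ii) $\Rightarrow$ (iii) $\Rightarrow$ (iv) $\Rightarrow$ (ii). For (i) $\Rightarrow$ (ii), I would first invoke the standard consequences of relative density of $\eps$-almost periods: $\varphi$ is bounded, uniformly continuous, and has relatively compact range in $\X$. Given any sequence $\gamma'$, a diagonal extraction over a countable dense subset of $\R$ combined with range precompactness produces a pointwise convergent subsequence, which is then upgraded to uniform convergence on $\R$ via equicontinuity of $\{\varphi(\cdot+\gamma'_n)\}$. For (ii) $\Rightarrow$ (i) I would argue by contradiction: if $\mathcal T(\varphi,\eps_0)$ fails to be relatively dense for some $\eps_0>0$, inductively choose $\tau_1,\tau_2,\dots$ so that $\tau_n-\tau_m\notin\mathcal T(\varphi,\eps_0/2)$ whenever $n\neq m$; then no subsequence of $\{\varphi(\cdot+\tau_n)\}$ can be uniformly Cauchy, contradicting (ii).

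For (ii) $\Rightarrow$ (iii), given $\gamma'$ and $\beta'$, first apply (ii) to $\beta'$ to obtain $\beta^{(1)}\subset\beta'$ with $T_{\beta^{(1)}}\varphi=\psi$ uniformly on $\R$; note that $\psi$ is again Bochner almost periodic, since uniform limits of almost periodic functions are almost periodic. Let $\gamma^{(1)}$ be the subsequence of $\gamma'$ with the same indices, and apply (ii) to $\psi$ along $\gamma^{(1)}$ to extract $\gamma\subset\gamma^{(1)}$ with $T_\gamma\psi=\chi$ uniformly; let $\beta$ be the subsequence of $\beta^{(1)}$ with the same indices as $\gamma$. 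The triangle inequality
\[
\sup_{t\in\R}d\bigl(\varphi(t+\gamma_n+\beta_n),\chi(t)\bigr)\leq\sup_{s\in\R}d\bigl(\varphi(s+\beta_n),\psi(s)\bigr)+\sup_{t\in\R}d\bigl(\psi(t+\gamma_n),\chi(t)\bigr)
\]
then forces $T_{\gamma+\beta}\varphi=T_\gamma T_\beta\varphi=\chi$ uniformly on $\R$. The implication (iii) $\Rightarrow$ (iv) is immediate from uniform $\Rightarrow$ pointwise convergence.

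For (iv) $\Rightarrow$ (ii), I would first deduce two auxiliary properties. To get uniform continuity, from a presumed violation $d(\varphi(s_n),\varphi(t_n))\geq\eps_0$ with $s_n-t_n\to 0$, apply (iv) to $\gamma'_n=t_n$, $\beta'_n=s_n-t_n$; continuity of $\varphi$ gives $T_\beta\varphi=\varphi$, and the pointwise identity $T_{\gamma+\beta}\varphi=T_\gamma\varphi$ evaluated at $r=0$ forces $\lim\varphi(s_{n_k})=\lim\varphi(t_{n_k})$, contradicting the $\eps_0$-separation by continuity of $d$. Relative compactness of the range follows by applying (iv) with $\beta'\equiv 0$ to an arbitrary sequence in the range. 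Now for an arbitrary sequence $\alpha'$, extract $\alpha\subset\alpha'$ via (iv) so that $T_\alpha\varphi=\psi$ exists pointwise; if convergence is not uniform, choose $\eps>0$, $\alpha''\subset\alpha$, and witnesses $t_n$ with $d(\varphi(t_n+\alpha''_n),\psi(t_n))\geq\eps$. If $\{t_n\}$ is bounded, pass to $t_n\to t^*$ and combine equicontinuity with $\varphi(t^*+\alpha''_n)\to\psi(t^*)$ to contradict the separation; if $\{t_n\}$ is unbounded, apply (iv) to $\gamma'=(t_n)$ and $\beta'=(\alpha''_n)$ to obtain same-index subsequences with $T_{\gamma+\beta}\varphi=T_\gamma T_\beta\varphi$ pointwise. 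Since $\beta\subset\alpha''\subset\alpha$ forces $T_\beta\varphi=\psi$, evaluating this identity at $r=0$ yields $\lim\varphi(t_{n_k}+\alpha''_{n_k})=\lim\psi(t_{n_k})$, again contradicting the $\eps$-separation. The main obstacle is precisely this last step: the extraction must be arranged so that the witness times $t_n$ survive into the pointwise limit-exchange identity, and it is essential that the extracted $\beta$ remain a subsequence of $\alpha$ so that $T_\beta\varphi$ agrees with the original pointwise limit $\psi$ rather than some a priori unrelated function.
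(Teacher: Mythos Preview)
The paper does not prove this theorem: it is stated as a classical result attributed to Bochner (note the bracketed attribution and the absence of any \texttt{proof} environment after the statement), so there is no ``paper's own proof'' to compare against. Your sketch follows the standard route (essentially Bochner's 1962 argument and its refinements in Fink \cite{Fink}) and is correct; in particular, your handling of (iv) $\Rightarrow$ (ii) is sound, and you may note that the case distinction on boundedness of $\{t_n\}$ is unnecessary, since the second case's re-application of (iv) to $\gamma'=(t_n)$, $\beta'=(\alpha''_n)$ works regardless.
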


\begin{remark}\rm
It follows from  the above theorem that Bohr's almost periodicity is equivalent to Bochner's one.
Therefore, we just call them almost periodicity below.
\end{remark}

\begin{definition}\rm\label{Bochaa}
We say $\varphi\in C(\mathbb{R},\X)$ is {\em almost automorphic},
if for any sequence $\gamma'=\{\gamma'_{n}\}\subset\mathbb{R}$ there exist a subsequence
$\gamma=\{\gamma_{n}\}\subset\gamma'$ and some function $\psi:\mathbb{R}\rightarrow \X$
such that
$$\lim_{n\rightarrow\infty}\varphi(t+\gamma_{n})=\psi(t)\quad {\rm{and}}\quad
\lim_{n\rightarrow\infty}\psi(t-\gamma_{n})=\varphi(t)$$
pointwise for $t\in\mathbb{R}$.
\end{definition}

In order to study recurrent solutions of differential equations, we need to recall the definition of uniformly almost
periodic/automorphic functions. Let $(\X_{i},d_{i})$, $i=1,2$ be complete metric spaces.
We write $C(\R\times\X_{1},\X_{2})$ to mean the set of all continuous functions
$\varphi:\R\times\X_{1}\rightarrow\X_{2}$.

\begin{definition}\rm
We say $\varphi \in C(\mathbb R\times\X_{1},\cal{X}_{2})$ is {\em T-periodic in t},
if there exists some nonzero constant $T\in\R$ such that $\varphi(t+T,x)=\varphi(t,x)$
for all $t\in\R$, $x\in\X_{1}$.
\end{definition}

\begin{definition} [See Yoshizawa \cite{Yosh}]\rm
We say $\varphi \in C(\mathbb R\times\X_{1},\cal{X}_{2})$ is
{\em almost periodic in t uniformly for $x\in\X_{1}$} if for any $\varepsilon >0$
and any compact set $Q\subset\X_{1}$, the set $\mathcal T(\varphi,\varepsilon,Q)$ is relatively dense,
i.e. there exists a constant $l=l(\varepsilon,Q)>0$ such that
$\mathcal T(\varphi,\varepsilon,Q)\cap [a,a+l]\not=\emptyset$ for all $a\in\mathbb R$, where
$$\mathcal T(\varphi,\varepsilon,Q):=
\{\tau\in\R: \sup\limits_{(t,x)\in\R\times Q}d_{2}(\varphi(t+\tau,x),\varphi(t,x))<\varepsilon\}.$$
\end{definition}

Similar to Theorem \ref{BochBohr}, we recall the following results.

\begin{theorem}[See Yoshizawa \cite{Yosh}]
Suppose that $\varphi:\mathbb R\times\X_{1}\rightarrow\cal{X}_{2}$ is continuous. Then the
following statements are equivalent.
\begin{enumerate}
  \item $\varphi$ is almost periodic in $t$ uniformly for $x\in\X_{1}$.
  \item For any sequence $\gamma'=\{\gamma_{n}'\}\subset\R$ there exists a  subsequence
        $\gamma=\{\gamma_{n}\}\subset\gamma'$ such that
        $$(T_{\gamma}\varphi)(t,x):=\lim_{n\rightarrow\infty}\varphi(t+\gamma_{n},x)$$
        exists uniformly with respect to $t\in\R$ and $x\in Q$, where $Q$ is an arbitrary compact
        subset of $\X_{1}$.
  \item For any two sequences $\gamma'=\{\gamma_{n}'\}\subset\R$ and $\beta'=\{\beta_{n}'\}\subset\R$
        there exist two subsequences $\gamma=\{\gamma_{n}\}\subset\gamma'$ and
        $\beta=\{\beta_{n}\}\subset\beta'$ with the same indexes such that
        $$T_{\gamma+\beta}\varphi=T_{\gamma}T_{\beta}\varphi$$
        uniformly on $\R\times Q$, where $Q$ is an arbitrary compact subset of $\X_{1}$.
\end{enumerate}
\end{theorem}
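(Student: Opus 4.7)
My plan is to follow the chain (i) $\Rightarrow$ (ii) $\Rightarrow$ (iii) $\Rightarrow$ (i), adapting the classical scheme of Bochner's argument (Theorem \ref{BochBohr}) to the parametrized setting, with uniformity taken over each compact set $Q \subset \X_{1}$ rather than all of $\X_{1}$ simultaneously.

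For (i) $\Rightarrow$ (ii), I would first establish two auxiliary facts from almost periodicity in $t$ uniformly for $x$ on a fixed compact $Q$: $\varphi$ is bounded on $\R\times Q$ and $\varphi$ is uniformly continuous on $\R\times Q$. Both follow from covering $\R$ by intervals of length $l=l(\eps,Q)$, each containing an $\eps$-almost period from $\mathcal T(\varphi,\eps,Q)$, which lets one transfer estimates on a bounded slab $[-l,2l]\times Q$ (where continuity on a compact set yields boundedness and uniform continuity) to all of $\R\times Q$. Given a sequence $\gamma'$, I pick a countable dense subset $\{(t_{k},x_{k})\}$ of $\R\times Q$ (possible since $Q$ is a compact subset of a metric space, hence separable) and, via a Cantor diagonal extraction using boundedness in $\X_{2}$, produce a subsequence $\gamma\subset\gamma'$ along which $\varphi(t_{k}+\gamma_{n},x_{k})$ converges for every $k$. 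Equicontinuity of the family $\{\varphi(\cdot+\gamma_{n},\cdot)\}_{n}$ on $\R\times Q$, combined with the almost-period covering trick, then upgrades pointwise convergence on a dense set to uniform convergence on $\R\times Q$, producing $T_{\gamma}\varphi$.

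For (ii) $\Rightarrow$ (iii), I fix a compact $Q$ and perform successive extractions. Apply (ii) to $\beta'$ to obtain $\beta^{(1)}\subset\beta'$ with $T_{\beta^{(1)}}\varphi$ existing uniformly on $\R\times Q$, and let $\gamma^{(1)}$ be the same-index subsequence of $\gamma'$. Apply (ii) to $\gamma^{(1)}$ to obtain $\gamma^{(2)}\subset\gamma^{(1)}$ with $T_{\gamma^{(2)}}\varphi$ existing uniformly, and take $\beta^{(2)}\subset\beta^{(1)}$ with matching indices. Finally apply (ii) to the sum sequence $\gamma^{(2)}+\beta^{(2)}$ and extract common-index subsequences $\gamma\subset\gamma^{(2)}$, $\beta\subset\beta^{(2)}$ for which $T_{\gamma+\beta}\varphi$ also exists uniformly. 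A $3\eps$-argument using uniform continuity of $\varphi$ on $\R\times Q$ together with the uniform convergence of $T_{\beta}\varphi$ and of $T_{\gamma}T_{\beta}\varphi$ then identifies $T_{\gamma+\beta}\varphi$ with $T_{\gamma}T_{\beta}\varphi$ uniformly on $\R\times Q$, which gives (iii).

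For (iii) $\Rightarrow$ (i), I argue by contradiction. If (i) fails, there exist a compact $Q\subset\X_{1}$, some $\eps_{0}>0$, and an unbounded sequence of intervals $[a_{n},a_{n}+n]$ containing no element of $\mathcal T(\varphi,\eps_{0},Q)$. Choosing $\gamma_{n}'$ inductively inside these intervals, one can arrange that every difference $\gamma_{m}'-\gamma_{n}'$ with $n<m$ lies in the complement of $\mathcal T(\varphi,\eps_{0}/3,Q)$; set $\beta_{n}'=-\gamma_{n}'$. For any same-index subsequences $\gamma\subset\gamma'$, $\beta\subset\beta'$ as in (iii), uniform convergence of $T_{\gamma+\beta}\varphi=T_{\gamma}T_{\beta}\varphi$ on $\R\times Q$ forces $\sup_{(t,x)\in\R\times Q}d_{2}(\varphi(t+(\gamma_{m}-\gamma_{n}),x),\varphi(t,x))<\eps_{0}/3$ for all sufficiently large $n<m$, contradicting the choice of $\gamma'$. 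The main obstacle throughout is securing the uniformity in $x$: one must ensure that the almost-period length $l(\eps,Q)$ and the modulus of uniform continuity depend only on $Q$ and $\eps$ (not on the translation parameter), so that the Bochner-type estimates transfer under arbitrary shifts in $\R$; once this uniformity is in hand, the remaining steps parallel the proof of Theorem \ref{BochBohr}.
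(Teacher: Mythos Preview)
The paper does not give its own proof of this theorem: it is stated with the attribution ``See Yoshizawa \cite{Yosh}'' and no argument follows. So there is nothing to compare against; your sketch is in fact supplying what the paper omits, and it follows the standard Bochner--Yoshizawa route.

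Two small points are worth tightening. First, in (i) $\Rightarrow$ (ii) you invoke ``boundedness in $\X_{2}$'' to run the diagonal extraction, but $\X_{2}$ is only a complete metric space, so boundedness alone does not yield convergent subsequences; what you actually need (and what almost periodicity gives) is that the range $\{\varphi(t,x):t\in\R,\,x\in Q\}$ is totally bounded, hence relatively compact. Second, your extraction in (i) $\Rightarrow$ (ii) is carried out for a fixed compact $Q$, whereas statement (ii) asks for a single subsequence $\gamma$ that works for \emph{every} compact $Q\subset\X_{1}$. In the applications of the paper $\X_{1}=V$ is separable, so one can diagonalize over a countable dense set of $\X_{1}$ and then use the uniform continuity of the translates to pass to arbitrary compact $Q$; you should make this step explicit. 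Your (iii) $\Rightarrow$ (i) is fine once you note that the very existence of $T_{\beta}\varphi$ uniformly on $\R\times Q$ (implicit in (iii)) makes $\{\varphi(\cdot-\gamma_{n},\cdot)\}$ uniformly Cauchy, which after the shift $t\mapsto t+\gamma_{m}$ yields the contradiction you describe.
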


\begin{definition}[See  Shen and Yi \cite{SY}]\rm
We say $\varphi \in C(\mathbb R\times\X_{1},\cal{X}_{2})$ is
{\em almost automorphic in t uniformly for $x\in\X_{1}$}, if for any sequence
$\gamma'=\{\gamma_{n}'\}\subset\R$
there exist a subsequence $\gamma=\{\gamma_{n}\}\subset\gamma'$ and some  function
$\psi:\R\times\X_{1}\rightarrow\X_{2}$ such that
$$\lim_{n\rightarrow\infty}\varphi(t+\gamma_{n},x)=\psi(t,x) \quad {\rm{and}}\quad
\lim_{n\rightarrow\infty}\psi(t-\gamma_{n},x)=\varphi(t,x)$$
uniformly on $[a,b]\times Q$, where $[a,b]$ is an arbitrary finite interval and $Q$ an arbitrary
compact subset of $\X_{1}$.
\end{definition}

\begin{remark}\rm
When we consider stochastic partial differential equations, $\X_{1}$ is a Banach space $V$.
For the sake of simplicity, we call a function $\varphi$ ``uniformly almost periodic
(uniformly almost automorphic)'', which means that $\varphi$ is almost periodic (almost automorphic) in
$t$ uniformly  for $x\in V$.
\end{remark}

\subsection{Recurrence in distribution.}

Suppose further that $(\X,d)$ is a Polish space, i.e. a separable complete metric space. We write $Pr(\X)$ to mean the set of all
Borel probability measures on $\X$. Denote by $C_{b}(\X)$ the space of all continuous functions
$\varphi:\X\rightarrow \R$ for which the norm
$\|\varphi\|_{\infty}:=\sup\limits_{x\in \X}|\varphi(x)|$
is finite. Let $\{\mu_{n}\}:=\{\mu_{n}\}_{n=1}^{\infty}\subset Pr(\X)$ and $\mu\in Pr(\X)$. We say
$\mu_{n}$ {\em converges weakly} to $\mu$ in $Pr(\X)$, provided $\int \varphi\d\mu_{n}$
converges to $\int \varphi\d\mu$ for all $\varphi\in C_{b}(\X)$. Let $\varphi\in C_{b}(\X)$
be Lipschitz continuous, we define
\begin{equation*}
  \|\varphi\|_{BL}:=Lip(\varphi)+\|\varphi\|_{\infty},
\end{equation*}
where $Lip(\varphi)=\sup\limits_{x\neq y}\frac{|\varphi(x)-\varphi(y)|}{d(x,y)}$. We endow $Pr(\X)$
with $d_{BL}$ metric, where
\[
d_{BL}(\mu,\nu):=\sup\left\{\left|\int \varphi\d\mu-\int \varphi\d\nu\right|:\|\varphi\|_{BL}\leq1\right\}
\]
for all $\mu,~\nu\in Pr(\X)$.
It is well known that $d_{BL}$ generates the weak topology on $Pr(\X)$, i.e.
$\mu_{n}\rightarrow\mu$ weakly in $Pr(\X)$ if and only if
$d_{BL}(\mu_{n},\mu)\rightarrow0$ as $n\rightarrow\infty$. See Chapter 11 in \cite{Dudley} for this metric
$d_{BL}$ (denoted by $\beta$ there) and its related properties.

We assume in the following exposition that $(\Omega,\mathcal{F},P)$ is a complete probability space and that
$(H,\langle~,~\rangle_{H})$ is a separable Hilbert space. The space $L^{2}(\Omega,P;H)$
consists of all $H$-valued random variables $\zeta$ such that
$E\|\zeta\|_{H}^{2}=\int_{\Omega}\|\zeta\|_{H}^{2} \d P<\infty$. We say an $H$-valued stochastic process
$X=\{X(t): t\in\R\}$ is {\em $L^{2}$-bounded} provided $\sup\limits_{t\in\R}E\|X(t)\|_{H}^{2}<\infty$.
Throughout the paper, we denote by $\L(\zeta)\in Pr(H)$ the law or distribution of
$H$-valued random variable $\zeta$. A sequence of $H$-valued continuous stochastic processes $\{X_{n}\}$ is said to
{\em converge in distribution to $X$ (on $C(\R,H)$)} provided $\L(X_{n})$ weakly converges to $\L(X)$ in $Pr(C(\R,H))$,
where $\L(X)$ is the law or distribution of $X$ on $C(\R,H)$; if $d_{BL} (\L (X_n(t)), \L(X(t)))\to 0$
as $n\to\infty$ for each $t\in\R$, we simply say that $X_n$ {\em converges in distribution to $X$ on $H$}.

Note that $(Pr(C(\R,H)),d_{BL})$ and $(Pr(H),d_{BL})$ are Polish spaces (see, e.g. Theorems 6.2 and 6.5 in
Chapter \uppercase\expandafter{\romannumeral2} of \cite{Parth}). So, similar to Definitions \ref{tperiod}, \ref{Bohrap}
and \ref{Bochaa}, we can define recurrence in distribution as follows.

\begin{definition}\rm\label{apaalaw}
We say an $H$-valued continuous stochastic process $X$ is {\em T-periodic
(respectively, almost periodic, almost automorphic) in distribution},
if the mapping $t\mapsto \L(X(t+\cdot))$ is $T$-periodic (respectively, almost periodic, almost automorphic) in $Pr(C(\R,H))$.
In particular, $X$ is called {\em stationary} provided $X$ is $T$-periodic in distribution for any $T\in\R$.
\end{definition}

\begin{remark}\rm
Let $X$ be an $H$-valued continuous stochastic process. Note that
$\mu(t):=\L(X(t))$, $t\in\R$ is $T$-periodic (respectively, almost periodic, almost automorphic)
in $Pr(H)$, provided $\L(X)$ is $T$-periodic (respectively, almost periodic, almost automorphic)
in $Pr(C(\R,H))$. But the converse is not true in general.
\end{remark}

\subsection{Variational approach}

Recall that $H$ is a separable Hilbert space with norm $\|\cdot\|_{H}$ and inner product
$\langle~,~\rangle_{H}$, and that $H^{*}$ is the dual space of $H$. Let $(V,\|\cdot\|_{V})$ be a
reflexive Banach space such that $V\subset H$ continuously and densely. So we have $H^{*}\subset V^{*}$
continuously and densely. Identifying $H$ with its dual $H^{*}$ via
the Riesz isomorphism, then we have
$$V\subset H\subset V^{*}$$
continuously and densely. We write $_{V^{*}}\langle~,~\rangle_{V}$ to denote the pairing between $V^{*}$
and $V$. It follows that
$$ _{V^{*}} \langle h,v\rangle_{V}=\langle h,v\rangle_{H}$$
for all $h\in H$, $v\in V$. $(V,H,V^{*})$ is called {\em Gelfand triple}.
Since $H\subset V^{*}$ continuously and densely,
we deduce that $V^{*}$ is separable, hence so is $V$.
See \cite{PR} for details.

Assume that $(V_{1},\|\cdot\|_{V_{1}})$ and $(V_{2},\|\cdot\|_{V_{2}})$ are reflexive Banach spaces
and embedded in $H$ continuously and densely. Then we get two triples:
\[
V_{1}\subset H\backsimeq H^{*}\subset V_{1}^{*}\quad {\rm{and}}\quad
V_{2}\subset H\backsimeq H^{*}\subset V_{2}^{*}.
\]
We define the norm $\|v\|_{V}:=\|v\|_{V_{1}}+\|v\|_{V_{2}}$ on the space $V:=V_{1}\cap V_{2}$.
Note that $(V,\|\cdot\|_{V})$ is also a Banach space. Since $V_{1}^{*}$ and $V_{2}^{*}$
can be thought as subspaces of $V^{*}$, we get a Banach space
$W:=V_{1}^{*}+V_{2}^{*}\subset V^{*}$ with norm
\[
\|f\|_{W}:=\inf\left\{\|f_{1}\|_{V_{1}^{*}}+\|f_{2}\|_{V_{2}^{*}}:
f=f_{1}+f_{2}, ~f_{i}\in V_{i}^{*}, ~i=1,2\right\}.
\]
Similarly, we write $_{V_{i}^{*}}\langle~,~\rangle_{V_{i}}$ to denote the pairing between $V_{i}^{*}$
and $V_{i}$, $i=1,2$. Then, for all $v\in V$ and $f=f_{1}+f_{2}\in W\subset V^{*}$ we have
\[
_{V^{*}}\langle f,v \rangle_{V}= ~_{V_{1}^{*}}\langle f_{1},v \rangle_{V_{1}}
+~_{V_{2}^{*}}\langle f_{2},v \rangle_{V_{2}}.
\]
Note carefully that if $f\in H$ and $v\in V$, then we obtain
\[
_{V^{*}}\langle f,v \rangle_{V}= ~_{V_{1}^{*}}\langle f,v \rangle_{V_{1}}
=~_{V_{2}^{*}}\langle f,v \rangle_{V_{2}}=\langle f,v \rangle_{H}.
\]

Let $W(t)$, $t\in\R$ be a two-sided cylindrical $Q$-Wiener process with $Q=I$ on
a separable Hilbert space $(U,\langle~,~\rangle_{U})$ with respect to a complete
filtered probability space $(\Omega,\mathcal{F},\mathcal{F}_{t},P)$. Denote by
$L_{2}(U,H)$ the space of all Hilbert-Schmidt operators from $U$ into $H$. Consider the following
stochastic partial differential equation on $H$
\begin{equation}\label{eqSPDE1}
\ \d X(t)=A(t,X(t))\d t+B(t,X(t))\d W(t),
\end{equation}
where $A=A_{1}+A_{2}$, $A_{i}:\R\times V_{i}\rightarrow V_{i}^{*}$, $i=1,2$ and
$B:\R\times V\rightarrow L_{2}(U,H)$.

Let us introduce the following conditions.
\begin{enumerate}

\item[(H1)](Hemicontinuity) For all $u$, $v$, $w \in V$ and $t\in \mathbb{R}$ the map
$$\mathbb{R}\ni \theta \mapsto~_{V^{*}}\langle A(t,u+\theta v),w\rangle_{V}$$
is continuous;

\item[(H2)](Monotonicity) There exists a constant $c\in\R$ such that for all $u$, $v\in V$, $t\in \mathbb{R}$
\begin{align*}
    2_{V^{*}}\langle A(t,u)-A(t,v),u-v\rangle_{V}+\|B(t,u)-B(t,v)\|^{2}_{L_{2}(U,H)}   \leq c\|u-v\|^{2}_{H};
\end{align*}

\item[(H3)](Coercivity) There  exist constants $\alpha_{1}$, $\alpha_{2} \in(1,\infty)$,
           $c_{1}\in \mathbb{R}$, $c_{2}$, $c_{2}'\in(0,\infty)$ and $M_{0}\in(0,\infty)$
          such that for all $v\in V$, $t\in\mathbb{R}$
 \begin{equation*}
   2_{V^{*}}\langle A(t,v),v\rangle_{V}+\|B(t,v)\|^{2}_{L_{2}(U,H)}\leq c_{1}\|v\|^{2}_{H}
   -c_{2}\|v\|^{\alpha_{1}}_{V_{1}}-c_{2}'\|v\|^{\alpha_{2}}_{V_{2}}+M_{0} ;
 \end{equation*}

\item[(H4)](Boundedness) There exist constants $c_{3}$, $c_{3}'\in(0,\infty)$ such that for
    all $v\in V$, $t\in\mathbb{R}$
    $$\|A_{1}(t,v)\|_{V_{1}^{*}}\leq c_{3}\|v\|^{\alpha_{1}-1}_{V_{1}}+M_{0},$$
    $$\|A_{2}(t,v)\|_{V_{2}^{*}}\leq c_{3}'\|v\|^{\alpha_{2}-1}_{V_{2}}+M_{0},$$
    where $\alpha_{i}$ is as in (H3).
\end{enumerate}

\begin{definition} \label{vsolution}\rm
We say  continuous $H$-valued $(\mathcal{F}_{t})$-adapted process $X(t)$, $t\in[0,T]$ is a
{\em solution} to equation \eqref{eqSPDE1},
if for its $\d t\otimes P$-equivalence class $\widehat{X}$ we have
$\widehat{X}\in \cap_{i=1,2}L^{\alpha_{i}}([0,T]\times \Omega,\d t\otimes P;V_{i})
\cap L^{2}([0,T]\times \Omega, \d t\otimes P;H)$
with $\alpha_{i}$ as in (H3) and $P$-a.s.
\begin{equation}
\ X(t)=X(s)+\int^{t}_{s}A(\sigma,\overline{X}(\sigma))\d\sigma
+\int^{t}_{s}B(\sigma,\overline{X}(\sigma))\d W(\sigma), \quad 0\leq s\leq t\leq T,
\end{equation}
where $\overline{X}$ is any $V$-valued progressively measurable $\d t\otimes P$-version of $\widehat{X}$.
\end{definition}

\begin{remark}\label{estB}\rm
\begin{enumerate}
  \item Note that solutions in Definition \ref{vsolution} are usually called variational
        solutions in the literature.
  \item By (H3) and (H4),  for all $t\in \R$ and $v\in V$ we have
  \begin{align*}
    \|B(t,v)\|_{L_{2}(U,H)}^{2}  & \leq c_{1}\|v\|_{H}^{2}+\left(2c_{3}-c_{2}\right)\|v\|_{V_{1}}^{\alpha_{1}}
    +\left(2c'_{3}-c'_{2}\right)\|v\|_{V_{2}}^{\alpha_{2}} \\
     & \quad+2M_{0}\left(\|v\|_{V_{1}}+\|v\|_{V_{2}}\right)+M_{0}.
  \end{align*}
  \item Suppose that (H1)--(H4) hold, then for any $X_{0}\in L^{2}(\Omega,\mathcal{F}_{0},P;H)$
        there exists a unique solution to equation \eqref{eqSPDE1}
        in the sense of Definition \ref{vsolution} (see \cite{ZhangXC} for more general results).
  \item For all $0\leq s \leq t \leq T$ we have the following It\^o's formula
        (see, e.g. \cite[Theorem 4.2.5]{PR}).
  \begin{align*}
    \|X(t)\|_{H}^{2} & =\|X(s)\|_{H}^{2}+\int^{t}_{s}\left(2_{V^{*}}\langle A(\sigma,\overline{X}(\sigma)),
    \overline{X}(\sigma)\rangle_{V}+\|B(\sigma,\overline{X}(\sigma))\|_{L_{2}(U,H)}^{2}\right) \d\sigma\\
     & \quad +2\int^{t}_{s}\langle X(\sigma),B(\sigma,\overline{X}(\sigma))\d W(\sigma)\rangle_{H} .
  \end{align*}
\end{enumerate}
\end{remark}

\section{Continuous dependence and bounded solutions}

The following result, which shows continuous dependence on initial values and
coefficients for solutions to equation \eqref{eqSPDE1}, is interesting on its own rights,
so we state it as a theorem. It turns out to be necessary to consider the following condition.

(HL) There exists a constant $L_{B}>0$ such that for all $u$, $v\in V$, $t\in \mathbb{R}$
$$\|B(t,u)-B(t,v)\|_{L_{2}(U,H)}\leq L_{B}\|u-v\|_{H}.$$

\begin{theorem}\label{conlemma}
Suppose that $A_{n}$, $A$, $B_{n}$, $B$ satisfy {\rm{(H1)--(H4)}} and {\rm{(HL)}} with the same constants
$c$, $c_{1}$, $c_{2}$, $c_{3}$, $c_{2}'$, $c_{3}'$, $M_{0}$, $\alpha_{i}$, $i=1,2$ and $L_{B}$.
Let $X_{n}$ be a solution of the Cauchy problem
\begin{equation}\label{system1}
  \left\{
   \begin{aligned}
   &\ \d X(t)=A_{n}(t,X(t))\d t+B_{n}(t,X(t))\d W(t)\\
  &\ X(s)=\zeta_{n}^{s}
   \end{aligned}
   \right.
  \end{equation}
and $X$ be a solution to the Cauchy problem
\begin{equation}\label{system2}
  \left\{
   \begin{aligned}
   &\ \d X(t)=A(t,X(t))\d t+B(t,X(t))\d W(t)\\
  &\ X(s)=\zeta^{s}.
   \end{aligned}
   \right.
  \end{equation}
Assume further that
\begin{enumerate}
  \item[(1)] $\lim\limits_{n\rightarrow\infty}A_{i,n}(t,x)=A_{i}(t,x)~in~V_{i}^{*}~for~all~t\in\mathbb{R},~x\in V,~i=1,2$;
  \item[(2)] $\lim\limits_{n\rightarrow\infty}B_{n}(t,x)=B(t,x)~in~L_{2}(U,H)~for~all~t\in\mathbb{R},~x\in V$.
\end{enumerate}
Then we have the following conclusions:
\begin{enumerate}
  \item If $\lim\limits_{n\rightarrow\infty}E\|\zeta_{n}^{s}-\zeta^{s}\|^{2}_{H}=0$,
  then $\lim\limits_{n\rightarrow\infty}E\sup\limits_{s\leq\tau\leq t}\|X_{n}(\tau)-X(\tau)\|^{2}_{H}=0$ for any $t>s$;
  \item If $\lim\limits_{n\rightarrow\infty}\zeta_{n}^{s}=\zeta^{s}$ in probability,
  then $\lim\limits_{n\rightarrow\infty}\sup\limits_{\tau\in[s,t]}\|X_{n}(\tau)-X(\tau)\|_{H}=0$ in probability;
  \item If $\lim\limits_{n\rightarrow\infty}d_{BL}(\mathcal{L}(\zeta_{n}^{s}),\mathcal{L}(\zeta^{s}))=0$ in $Pr(H)$,
  then
  $$\lim\limits_{n\rightarrow\infty}d_{BL}(\mathcal{L}(X_{n}),\mathcal{L}(X))=0 \quad {\rm{in}} ~Pr(C([s,\infty),H)).$$
\end{enumerate}
\end{theorem}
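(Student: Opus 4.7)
The plan for part (i) is to apply It\^o's formula (Remark \ref{estB}(iv)) to $\|Y_n(\tau)\|_H^2$ with $Y_n:=X_n-X$, and then to insert $\pm A_n(\sigma,X(\sigma))$ and $\pm B_n(\sigma,X(\sigma))$ into the drift to obtain the decomposition
\begin{align*}
& 2{}_{V^*}\langle A_n(\sigma,X_n)-A(\sigma,X),Y_n\rangle_V+\|B_n(\sigma,X_n)-B(\sigma,X)\|_{L_2(U,H)}^2 \\
&= \bigl[2{}_{V^*}\langle A_n(\sigma,X_n)-A_n(\sigma,X),Y_n\rangle_V+\|B_n(\sigma,X_n)-B_n(\sigma,X)\|_{L_2(U,H)}^2\bigr] \\
&\quad + 2{}_{V^*}\langle A_n(\sigma,X)-A(\sigma,X),Y_n\rangle_V + \|B_n(\sigma,X)-B(\sigma,X)\|_{L_2(U,H)}^2 \\
&\quad + 2\langle B_n(\sigma,X_n)-B_n(\sigma,X),B_n(\sigma,X)-B(\sigma,X)\rangle_{L_2(U,H)}.
\end{align*}
The bracketed ``monotone'' term is $\le c\|Y_n\|_H^2$ by (H2), and the cross term is dispatched by Cauchy--Schwarz together with (HL):
\[ 2|\langle B_n(X_n)-B_n(X),B_n(X)-B(X)\rangle_{L_2(U,H)}| \le L_B^2\|Y_n\|_H^2 + \|B_n(\sigma,X)-B(\sigma,X)\|_{L_2(U,H)}^2. \]

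The hard part is controlling the $A$-correction $2{}_{V^*}\langle A_n(\sigma,X)-A(\sigma,X),Y_n\rangle_V$, since no Lipschitz-type hypothesis is imposed on $A$. My plan is to split $A=A_1+A_2$ and, for each $i=1,2$, apply Young's inequality with the conjugate exponents $(\alpha_i,\alpha_i/(\alpha_i-1))$:
\[ 2|{}_{V_i^*}\langle A_{i,n}(\sigma,X)-A_i(\sigma,X),Y_n\rangle_{V_i}| \le \eps\|Y_n\|_{V_i}^{\alpha_i}+C_\eps\|A_{i,n}(\sigma,X)-A_i(\sigma,X)\|_{V_i^*}^{\alpha_i/(\alpha_i-1)}. \]
The first summand integrates, uniformly in $n$, to something bounded: the coercivity (H3) together with It\^o's formula yields the standard a priori estimate $\sup_n E\int_s^t\|X_n\|_{V_i}^{\alpha_i}\,\d\sigma\le C$, hence $\sup_n E\int_s^t\|Y_n\|_{V_i}^{\alpha_i}\,\d\sigma\le C$. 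The second summand tends to $0$ as $n\to\infty$ by hypothesis (1) and dominated convergence, the integrable envelope $C(1+\|X(\sigma)\|_{V_i}^{\alpha_i})$ being supplied by the boundedness (H4). A parallel dominated-convergence argument, using the envelope in Remark \ref{estB}(ii), handles $\|B_n(\sigma,X)-B(\sigma,X)\|_{L_2(U,H)}^2$ via hypothesis (2). Taking expectation and applying Gronwall's lemma then yields
\[ \limsup_{n\to\infty} E\|Y_n(t)\|_H^2 \le C\eps\, e^{(c+L_B^2)(t-s)} \]
for every $\eps>0$, whence the $L^2$ convergence. To upgrade this to $E\sup_{s\le\tau\le t}\|Y_n(\tau)\|_H^2\to 0$ I would apply Burkholder--Davis--Gundy to the martingale part $\int_s^\tau\langle Y_n,(B_n(\sigma,X_n)-B(\sigma,X))\,\d W(\sigma)\rangle_H$, absorbing $\tfrac12 E\sup\|Y_n\|_H^2$ into the left-hand side before invoking Gronwall a second time.

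For part (ii), the plan is localization: introduce stopping times $\tau_M:=\inf\{\sigma\ge s:\|X_n(\sigma)\|_H\vee\|X(\sigma)\|_H>M\}$ together with a truncation of the initial data on $\{\|\zeta^s\|_H\le M\}$ in order to reduce convergence in probability to the $L^2$ setting of (i); sending $n\to\infty$ first and then $M\to\infty$ gives convergence in probability of $\sup_{\tau\in[s,t]}\|Y_n(\tau)\|_H$, using tightness to handle the exit from $\{\|\cdot\|_H\le M\}$. For part (iii), the plan is to invoke Skorokhod's representation theorem in the Polish space $H$: from $d_{BL}(\L(\zeta_n^s),\L(\zeta^s))\to 0$ one builds, on a common probability space carrying a common cylindrical Wiener process, random variables $\tilde\zeta_n^s,\tilde\zeta^s$ with the prescribed laws and $\tilde\zeta_n^s\to\tilde\zeta^s$ almost surely; part (ii) then produces uniform-on-compacts convergence in probability of the corresponding solutions $\tilde X_n\to\tilde X$ on $[s,\infty)$, which in turn implies $\L(X_n)\to\L(X)$ weakly in $Pr(C([s,\infty),H))$ equipped with the topology of uniform convergence on compacta. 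The main obstacle throughout is the $A$-correction in (i): lacking a Lipschitz bound on $A$, one has to buy the $V_i$-integrability of $Y_n$ from coercivity and pay for it with the $V_i^*$-pointwise convergence of the coefficients through a carefully-tuned Young inequality.
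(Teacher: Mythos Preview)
Your proposal is correct and follows the same overall architecture as the paper: It\^o's formula on the difference, the splitting via $\pm A_n(\sigma,X)$ and $\pm B_n(\sigma,X)$, BDG for the martingale part, dominated convergence for the coefficient corrections using the envelopes from (H4) and Remark~\ref{estB}(ii), and Skorokhod plus uniqueness in law for (iii). Two technical choices differ from the paper's execution.

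For the $A$-correction in (i) you use Young's inequality with parameter $\eps$, bound $\sup_n E\int_s^t\|Y_n\|_{V_i}^{\alpha_i}\d\sigma$ via coercivity, and then send $\eps\to0$ after $\limsup_n$. The paper instead applies H\"older's inequality directly,
\[
E\int_s^t 2\bigl|{}_{V_i^*}\langle A_{i,n}(\sigma,X)-A_i(\sigma,X),Y_n\rangle_{V_i}\bigr|\,\d\sigma
\le 2\Bigl(E\!\int_s^t\!\|A_{i,n}-A_i\|_{V_i^*}^{\frac{\alpha_i}{\alpha_i-1}}\d\sigma\Bigr)^{\!\frac{\alpha_i-1}{\alpha_i}}
\Bigl(E\!\int_s^t\!\|Y_n\|_{V_i}^{\alpha_i}\d\sigma\Bigr)^{\!\frac{1}{\alpha_i}},
\]
so the product vanishes because the first factor does and the second is bounded by the same a~priori estimate you invoke. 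Both routes work; H\"older avoids the extra limit in $\eps$ and leads to a single Gronwall application with an explicit remainder $\xi_n\to0$.

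For (ii) you localize by truncating the initial data on $\{\|\zeta^s\|_H\le M\}$ and reduce to the $L^2$ setting of (i); this is the standard argument and it goes through (the stopping times you mention are not really needed). The paper takes a different route: after passing to an a.s.\ convergent subsequence it multiplies the It\^o identity by the random weight $e^{-c(\tau-s)-\sup_n\|\zeta_n^s\|_H}$, which renders the initial-data term integrable without any truncation, and then converts the resulting weighted $L^2$ bound to convergence in probability via a first-hitting-time argument. Your localization is more elementary; the paper's weighted estimate trades the double limit $n\to\infty$, $M\to\infty$ for a single Gronwall step.
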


\begin{proof}
(i) Employing It\^o's formula, we obtain
\begin{align*}
   & E\sup_{s\leq\tau\leq t}\|X_{n}(\tau)-X(\tau)\|^{2}_{H} \\
   & \leq E\|\zeta_{n}^{s}-\zeta^{s}\|^{2}_{H}
     +E\sup_{s\leq\tau\leq t}\int^{\tau}_{s}\Big(2_{V^{*}}\langle A_{n}(\sigma,\overline{X}_{n}(\sigma))
     -A_{n}(\sigma,\overline{X}(\sigma)),\overline{X}_{n}(\sigma)-\overline{X}(\sigma)\rangle_{V} \\
   & \qquad +2\|B_{n}(\sigma,\overline{X}_{n}(\sigma))-B_{n}(\sigma,\overline{X}(\sigma))\|^{2}_{L_{2}(U,H)}\Big)\d\sigma \\
   & \quad +E\sup_{s\leq\tau\leq t}\int^{\tau}_{s}\Big(2_{V^{*}}\langle A_{n}(\sigma,\overline{X}(\sigma))
     -A(\sigma,\overline{X}(\sigma)),\overline{X}_{n}(\sigma)-\overline{X}(\sigma)\rangle_{V}\\
   & \qquad +2\|B_{n}(\sigma,\overline{X}(\sigma))-B(\sigma,\overline{X}(\sigma))\|^{2}_{L_{2}(U,H)}\Big)\d\sigma \\
   & \quad +E\sup_{s\leq\tau\leq t}2\int^{\tau}_{s}\langle X_{n}(\sigma)-X(\sigma),
     \left[B_{n}(\sigma,\overline{X}_{n}(\sigma))-B_{n}(\sigma,\overline{X}(\sigma))\right]\d W(\sigma)\rangle_{H} \\
   & \quad +E\sup_{s\leq\tau\leq t}2\int^{\tau}_{s}\langle X_{n}(\sigma)-X(\sigma),
     \left[B_{n}(\sigma,\overline{X}(\sigma))-B(\sigma,\overline{X}(\sigma))\right]\d W(\sigma)\rangle_{H}\\
   & =:E\|\zeta_{n}^{s}-\zeta^{s}\|^{2}_{H}+\mathbb{I}_{1}+\mathbb{I}_{2}+\mathbb{I}_{3}+\mathbb{I}_{4}.
\end{align*}
For the first term $\mathbb{I}_{1}$, by (H2) and (HL) we have
\begin{align}\label{I1}
  \mathbb{I}_{1} & := E\sup_{s\leq\tau\leq t}\int^{\tau}_{s}\Big(2_{V^{*}}\langle A_{n}(\sigma,\overline{X}_{n}(\sigma))
      -A_{n}(\sigma,\overline{X}(\sigma)),\overline{X}_{n}(\sigma)-\overline{X}(\sigma)\rangle_{V}\\
   &  \qquad +2\|B_{n}(\sigma,\overline{X}_{n}(\sigma))
      -B_{n}(\sigma,\overline{X}(\sigma))\|^{2}_{L_{2}(U,H)}\Big)\d\sigma\nonumber \\\nonumber
   &  \leq E\sup_{s\leq\tau\leq t}\int^{\tau}_{s}\left(\left|c\right|+L_{B}^{2}\right)\|\overline{X}_{n}(\sigma)
      -\overline{X}(\sigma)\|^{2}_{H}\d\sigma\\\nonumber
   &  \leq E\int^{t}_{s}\left(\left|c\right|+L_{B}^{2}\right)\|\overline{X}_{n}(\sigma)
      -\overline{X}(\sigma)\|^{2}_{H}\d\sigma\nonumber.
\end{align}
For the second term $\mathbb{I}_{2}$, by the H\"older inequality we obtain
\begin{align}\label{I2}
  \mathbb{I}_{2} & := E\sup_{s\leq\tau\leq t}\int^{\tau}_{s}\Big(2_{V^{*}}\langle A_{n}(\sigma,\overline{X}(\sigma))
     -A(\sigma,\overline{X}(\sigma)),\overline{X}_{n}(\sigma)-\overline{X}(\sigma)\rangle_{V}\\
   & \qquad +2\|B_{n}(\sigma,\overline{X}(\sigma))-B(\sigma,\overline{X}(\sigma))\|^{2}_{L_{2}(U,H)}\Big)\d\sigma\nonumber \\
   & \leq E\sup_{s\leq\tau\leq t}\int^{\tau}_{s}\Big(2\|A_{1,n}(\sigma,\overline{X}(\sigma))
     -A_{1}(\sigma,\overline{X}(\sigma))\|_{V_{1}^{*}}\|\overline{X}_{n}(\sigma)-\overline{X}(\sigma)\|_{V_{1}}\nonumber\\
   & \qquad +2\|A_{2,n}(\sigma,\overline{X}(\sigma))-A_{2}(\sigma,\overline{X}(\sigma))\|_{V_{2}^{*}}
     \|\overline{X}_{n}(\sigma)-\overline{X}(\sigma)\|_{V_{2}}\nonumber\\
   & \qquad +2\|B_{n}(\sigma,\overline{X}(\sigma))-B(\sigma,\overline{X}(\sigma))\|^{2}_{L_{2}(U,H)}\Big)\d\sigma\nonumber\\
   & \leq2\left(E\int^{t}_{s}\|A_{1,n}(\sigma,\overline{X}(\sigma))
     -A_{1}(\sigma,\overline{X}(\sigma))\|_{V_{1}^{*}}^{\frac{\alpha_{1}}{\alpha_{1}-1}}
     \d\sigma\right)^{\frac{\alpha_{1}-1}{\alpha_{1}}}
     \left(E\int^{t}_{s}\|\overline{X}_{n}(\sigma)-\overline{X}(\sigma)\|_{V_{1}}^{\alpha_{1}}
     \d\sigma\right)^{\frac{1}{\alpha_{1}}}\nonumber\\
   & \quad +2\left(E\int^{t}_{s}\|A_{2,n}(\sigma,\overline{X}(\sigma))
     -A_{2}(\sigma,\overline{X}(\sigma))\|_{V_{2}^{*}}^{\frac{\alpha_{2}}{\alpha_{2}-1}}
     \d \sigma\right)^{\frac{\alpha_{2}-1}{\alpha_{2}}}
     \left(E\int^{t}_{s}\|\overline{X}_{n}(\sigma)-\overline{X}(\sigma)\|_{V_{2}}^{\alpha_{2}}
     \d\sigma\right)^{\frac{1}{\alpha_{2}}}\nonumber\\
   & \quad +E\int^{t}_{s}2\|B_{n}(\sigma,\overline{X}(\sigma))
     -B(\sigma,\overline{X}(\sigma))\|^{2}_{L_{2}(U,H)}\d\sigma\nonumber.
\end{align}
For the last two terms $\mathbb{I}_{3}$ and $\mathbb{I}_{4}$, by Burkholder--Davis inequality
(see, e.g. \cite[Proposition D.0.1]{PR}) and Cauchy's inequality with $\epsilon$, we get
\begin{align}\label{I3}
  \mathbb{I}_{3} & := E\sup_{s\leq\tau\leq t}2\int^{\tau}_{s}\langle X_{n}(\sigma)-X(\sigma),
      \left[B_{n}(\sigma,\overline{X}_{n}(\sigma))-B_{n}(\sigma,\overline{X}(\sigma))\right]\d W(\sigma)\rangle_{H}\\
   &  \leq6E\left(\int^{t}_{s}\|B_{n}(\sigma,\overline{X}_{n}(\sigma))-B_{n}(\sigma,\overline{X}(\sigma))\|^{2}_{L_{2}(U,H)}
      \|X_{n}(\sigma)-X(\sigma)\|^{2}_{H}\d\sigma\right)^{1/2}\nonumber\\
   &  \leq C_{\epsilon}E\int^{t}_{s}L^{2}_{B}\|\overline{X}_{n}(\sigma)-\overline{X}(\sigma)\|^{2}_{H}\d\sigma
      +\epsilon E\sup_{s\leq\tau\leq t}\|X_{n}(\tau)-X(\tau)\|_{H}^{2}\nonumber
\end{align}
and
\begin{align}\label{I4}
  \mathbb{I}_{4} & := E\sup_{s\leq\tau\leq t}2\int^{\tau}_{s}\langle X_{n}(\sigma)-X(\sigma),
     \left[B_{n}(\sigma,\overline{X}(\sigma))-B(\sigma,\overline{X}(\sigma))\right]\d W(\sigma)\rangle_{H} \\
   & \leq6E\left(\int^{t}_{s}\|B_{n}(\sigma,\overline{X}(\sigma))-B(\sigma,\overline{X}(\sigma))\|^{2}_{L_{2}(U,H)}
     \|X_{n}(\sigma)-X(\sigma)\|^{2}_{H}\d\sigma\right)^{1/2}\nonumber\\
   & \leq C_{\epsilon}E\int^{t}_{s}\|B_{n}(\sigma,\overline{X}(\sigma))
     -B(\sigma,\overline{X}(\sigma))\|_{L_{2}(U,H)}^{2}\d\sigma
     +\epsilon E\sup_{s\leq\tau\leq t}\|X_{n}(\tau)-X(\tau)\|_{H}^{2}.\nonumber
\end{align}
Taking $\epsilon=\frac{1}{4}$  and combining \eqref{I1}--\eqref{I4}, we have
\begin{align*}
   & E\sup_{s\leq\tau\leq t}\|X_{n}(\tau)-X(\tau)\|^{2}_{H} \\
   & \leq 2E\|\zeta_{n}^{s}-\zeta^{s}\|^{2}_{H}
     +C_{1}E\int^{t}_{s}\sup_{s\leq u\leq\sigma}\|\overline{X}_{n}(u)-\overline{X}(u)\|^{2}_{H}\d\sigma\\
   & \quad +C_{2}\left(E\int^{t}_{s}\|A_{1,n}(\sigma,\overline{X}(\sigma))
     -A_{1}(\sigma,\overline{X}(\sigma))\|_{V_{1}^{*}}^{\frac{\alpha_{1}}{\alpha_{1}-1}}
     \d\sigma\right)^{\frac{\alpha_{1}-1}{\alpha_{1}}}
     \left(E\int^{t}_{s}\|\overline{X}_{n}(\sigma)-\overline{X}(\sigma)\|_{V_{1}}^{\alpha_{1}}
     \d\sigma\right)^{\frac{1}{\alpha_{1}}}\\
   & \quad +C_{2}\left(E\int^{t}_{s}\|A_{2,n}(\sigma,\overline{X}(\sigma))
     -A_{2}(\sigma,\overline{X}(\sigma))\|_{V_{2}^{*}}^{\frac{\alpha_{2}}{\alpha_{2}-1}}
     \d\sigma\right)^{\frac{\alpha_{2}-1}{\alpha_{2}}}
     \left(E\int^{t}_{s}\|\overline{X}_{n}(\sigma)-\overline{X}(\sigma)\|_{V_{2}}^{\alpha_{2}}
     \d\sigma\right)^{\frac{1}{\alpha_{2}}}\\
   & \quad +C_{3}E\int^{t}_{s}\|B_{n}(\sigma,\overline{X}(\sigma))
     -B(\sigma,\overline{X}(\sigma))\|^{2}_{L_{2}(U,H)}\d\sigma,
\end{align*}
where $C_{1}$, $C_{2}$ and $C_{3}$ are different positive constants, depending only on
$\epsilon$, $c$ and $L_{B}$. Then in view of the Gronwall's lemma, we have
\begin{equation}\label{conlemeq2}
  E\sup_{s\leq\tau\leq t}\|X_{n}(\tau)-X(\tau)\|^{2}_{H}\leq\xi_{n}{\rm{e}}^{C_{1}(t-s)},
\end{equation}
where
\begin{align*}
 \xi_{n} & := 2E\|\zeta_{n}^{s}-\zeta^{s}\|^{2}_{H}+C_{3}E\int^{t}_{s}\|B_{n}(\sigma,\overline{X}(\sigma))
     -B(\sigma,\overline{X}(\sigma))\|^{2}_{L_{2}(U,H)}\d\sigma \\
   & \quad +C_{2}\left(E\int^{t}_{s}\|A_{1,n}(\sigma,\overline{X}(\sigma))
     -A_{1}(\sigma,\overline{X}(\sigma))\|_{V_{1}^{*}}^{\frac{\alpha_{1}}{\alpha_{1}-1}}
     \d\sigma\right)^{\frac{\alpha_{1}-1}{\alpha_{1}}}
     \left(E\int^{t}_{s}\|\overline{X}_{n}(\sigma)-\overline{X}(\sigma)\|_{V_{1}}^{\alpha_{1}}
     \d\sigma\right)^{\frac{1}{\alpha_{1}}} \\
   & \quad +C_{2}\left(E\int^{t}_{s}\|A_{2,n}(\sigma,\overline{X}(\sigma))
     -A_{2}(\sigma,\overline{X}(\sigma))\|_{V_{2}^{*}}^{\frac{\alpha_{2}}{\alpha_{2}-1}}
     \d\sigma\right)^{\frac{\alpha_{2}-1}{\alpha_{2}}}
     \left(E\int^{t}_{s}\|\overline{X}_{n}(\sigma)-\overline{X}(\sigma)\|_{V_{2}}^{\alpha_{2}}
     \d\sigma\right)^{\frac{1}{\alpha_{2}}}.
  \end{align*}

Now, it suffices to prove that $\lim\limits_{n\rightarrow\infty}\xi_{n}=0$.
To this end, define
$$\gamma^{n}(R):=\inf\{\tau\geq s: \|X_{n}(\tau)\|_{H}\vee\|X(\tau)\|_{H}>R\}.$$
Using It\^o's formula and the product rule, we obtain
\begin{align*}
  & E\left({\rm{e}}^{-c_{1}(t-s)}\|X_{n}(t\wedge\gamma^{n}(R))\|_{H}^{2}\right) \\
  & =E\|\zeta_{n}^{s}\|_{H}^{2}+E\int_{s}^{t}{\rm{e}}^{-c_{1}(\sigma-s)}1_{[0,\gamma^{n}(R))}(\sigma)
     \Big(2_{V^{*}}\langle A_{n}(\sigma,\overline{X}_{n}(\sigma)),\overline{X}_{n}(\sigma) \rangle_{V}\\
  &  \qquad+\|B_{n}(\sigma,\overline{X}_{n}(\sigma))\|_{L_{2}(U,H)}^{2}\Big)\d\sigma
     +E\int_{s}^{t}-c_{1}{\rm{e}}^{-c_{1}(\sigma-s)}\|X_{n}(\sigma\wedge\gamma^{n}(R))\|_{H}^{2}\d\sigma.
\end{align*}
It follows from (H3) that
\begin{align}\label{conlemeq3}
 & E\left({\rm{e}}^{-c_{1}(t-s)}\|X_{n}(t\wedge\gamma^{n}(R))\|_{H}^{2}\right)+c_{1}E\int_{s}^{t}
     {\rm{e}}^{-c_{1}(\sigma-s)}\|X_{n}(\sigma\wedge\gamma^{n}(R))\|_{H}^{2}\d\sigma \\\nonumber
   & \quad +E\int_{s}^{t}{\rm{e}}^{-c_{1}(\sigma-s)}1_{[0,\gamma^{n}(R))}(\sigma)
           \left(c_{2}\|\overline{X}_{n}(\sigma)\|_{V_{1}}^{\alpha_{1}}
           +c'_{2}\|\overline{X}_{n}(\sigma)\|_{V_{2}}^{\alpha_{2}}\right)\d\sigma\\\nonumber
   & \leq E\|\zeta_{n}^{s}\|_{H}^{2}+E\int_{s}^{t}{\rm{e}}^{-c_{1}(\sigma-s)}M_{0}\d\sigma
         +E\int_{s}^{t}{\rm{e}}^{-c_{1}(\sigma-s)}c_{1}\|\overline{X}_{n}(\sigma)\|_{H}^{2}\d\sigma.\nonumber
\end{align}
Letting $R\rightarrow\infty$ in \eqref{conlemeq3} and using Fatou's lemma, we have
\begin{align}\label{conlemeq4}
   & E\left({\rm{e}}^{-c_{1}(t-s)}\|X_{n}(t)\|_{H}^{2}\right)+E\int_{s}^{t}{\rm{e}}^{-c_{1}(\sigma-s)}
     \left(c_{2}\|\overline{X}_{n}(\sigma)\|_{V_{1}}^{\alpha_{1}}
           +c'_{2}\|\overline{X}_{n}(\sigma)\|_{V_{2}}^{\alpha_{2}}\right)\d\sigma\\\nonumber
   & \leq E\|\zeta_{n}^{s}\|_{H}^{2}+E\int_{s}^{t}{\rm{e}}^{-c_{1}(\sigma-s)}M_{0}\d\sigma.\nonumber
\end{align}
Thus, by Lebesgue's dominated convergence theorem, (H4), Remark \ref{estB} (ii)
and \eqref{conlemeq4}, we obtain for $i=1,2$
$$\lim\limits_{n\rightarrow\infty}\left(E\int^{t}_{s}\|A_{i,n}(\sigma,\overline{X}(\sigma))
-A_{i}(\sigma,\overline{X}(\sigma))\|_{V_{i}^{*}}^{\frac{\alpha_{i}}{\alpha_{i}-1}}
  \d\sigma\right)^{\frac{\alpha_{i}-1}{\alpha_{i}}}
  \left(E\int^{t}_{s}\|\overline{X}_{n}(\sigma)-\overline{X}(\sigma)\|_{V_{i}}^{\alpha_{i}}
  \d\sigma\right)^{\frac{1}{\alpha_{i}}}=0,$$
$$\lim\limits_{n\rightarrow\infty}E\int^{t}_{s}\|B_{n}(\sigma,\overline{X}(\sigma))
  -B(\sigma,\overline{X}(\sigma))\|_{L_{2}(U,H)}^{2}\d\sigma=0.$$
Therefore,
\[
\lim\limits_{n\rightarrow\infty}\xi_{n}=0.
\]
The proof of (i) is complete.

(ii) According to the characterization of convergence in probability in terms of $P$-a.s.
convergent subsequences (see, e.g. \cite[Theorem 9.2.1]{Dudley}),
we may assume without loss of generality that
$\lim\limits_{n\rightarrow\infty}\zeta_{n}^{s}=\zeta^{s}$ $P$-a.s.
Similar to the proof of \eqref{conlemeq4}, we have
\begin{align}\label{bd}
   & E\left({\rm{e}}^{-c_{1}(\tau-s)-\sup\limits_{n}\|\zeta^{s}_{n}\|_{H}}\|X_{n}(\tau)\|_{H}^{2}\right)\\\nonumber
   &  \quad+E\int_{s}^{\tau}{\rm{e}}^{-c_{1}(\sigma-s)-\sup\limits_{n}\|\zeta^{s}_{n}\|_{H}}
     \left(c_{2}\|\overline{X}_{n}(\sigma)\|_{V_{1}}^{\alpha_{1}}
     +c'_{2}\|\overline{X}_{n}(\sigma)\|_{V_{2}}^{\alpha_{2}}\right)\d\sigma \\\nonumber
   & \leq E\left({\rm{e}}^{-\sup\limits_{n}\|\zeta^{s}_{n}\|_{H}}\|\zeta^{s}_{n}\|_{H}^{2}\right)
     +E\int_{s}^{t}{\rm{e}}^{-c_{1}(\sigma-s)-\sup\limits_{n}\|\zeta^{s}_{n}\|_{H}}M_{0}\d\sigma.
\end{align}

Applying It\^o's formula and the product rule, we get
\begin{align*}
   & \|X_{n}(\tau)-X(\tau)\|_{H}^{2}{\rm{e}}^{-c(\tau-s)-\sup\limits_{n}\|\zeta^{s}_{n}\|_{H}} \\
   & =\|\zeta_{n}^{s}-\zeta^{s}\|_{H}^{2}{\rm{e}}^{-\sup\limits_{n}\|\zeta^{s}_{n}\|_{H}}
      +\int_{s}^{\tau}-c{\rm{e}}^{-c(\sigma-s)-\sup\limits_{n}\|\zeta^{s}_{n}\|_{H}}\|X_{n}(\sigma)
      -X(\sigma)\|_{H}^{2}\d\sigma\\
   & \quad +\int_{s}^{\tau}{\rm{e}}^{-c(\sigma-s)-\sup\limits_{n}\|\zeta^{s}_{n}\|_{H}}
     \Big(2_{V^{*}}\langle A_{n}(\sigma,\overline{X}_{n}(\sigma))-A(\sigma,\overline{X}(\sigma)),
      \overline{X}_{n}(\sigma)-\overline{X}(\sigma)\rangle_{V}\\
   & \qquad +\|B_{n}(\sigma,\overline{X}_{n}(\sigma))-B(\sigma,\overline{X}(\sigma))\|_{L_{2}(U,H)}^{2}\Big)\d\sigma\\
   &\quad +2\int_{s}^{\tau}{\rm{e}}^{-c(\sigma-s)-\sup\limits_{n}\|\zeta^{s}_{n}\|_{H}}\langle X_{n}(\sigma)-X(\sigma),
     \left[B_{n}(\sigma,\overline{X}_{n}(\sigma))-B(\sigma,\overline{X}(\sigma))\right]\d W(\sigma)\rangle_{H}.
\end{align*}
Note that the last item is a real-valued local martingale. Hence localizing it,
by Lebesgue's dominated convergence theorem and (H2) we obtain that
\begin{align*}
   & E\left(\|X_{n}(\tau)-X(\tau)\|_{H}^{2}{\rm{e}}^{-c(\tau-s)-\sup\limits_{n}\|\zeta^{s}_{n}\|_{H}}\right) \\
   & \leq E\left(\|\zeta_{n}^{s}-\zeta^{s}\|_{H}^{2}{\rm{e}}^{-\sup\limits_{n}\|\zeta^{s}_{n}\|_{H}}\right)
     +E\int_{s}^{\tau}-c{\rm{e}}^{-c(\sigma-s)-\sup\limits_{n}\|\zeta^{s}_{n}\|_{H}}
     \|X_{n}(\sigma)-X(\sigma)\|_{H}^{2}\d\sigma\\
   & \quad + E\int_{s}^{\tau}{\rm{e}}^{-c(\sigma-s)-\sup\limits_{n}\|\zeta^{s}_{n}\|_{H}}
     \Big(2_{V^{*}}\langle A_{n}(\sigma,\overline{X}_{n}(\sigma))-A_{n}(\sigma,\overline{X}(\sigma)),
     \overline{X}_{n}(\sigma)-\overline{X}(\sigma)\rangle_{V}\\
   & \qquad +2\|B_{n}(\sigma,\overline{X}_{n}(\sigma))-B_{n}(\sigma,\overline{X}(\sigma))\|_{L_{2}(U,H)}^{2}
     +2\|B_{n}(\sigma,\overline{X}(\sigma))-B(\sigma,\overline{X}(\sigma))\|_{L_{2}(U,H)}^{2}\\
   & \qquad +2_{V^{*}}\langle A_{n}(\sigma,\overline{X}(\sigma))-A(\sigma,\overline{X}(\sigma)),
     \overline{X}_{n}(\sigma)-\overline{X}(\sigma)\rangle_{V}\Big)\d\sigma \\
   & \leq E\left(\|\zeta_{n}^{s}-\zeta^{s}\|_{H}^{2}{\rm{e}}^{-\sup\limits_{n}\|\zeta^{s}_{n}\|_{H}}\right)
     +E\int_{s}^{\tau}{\rm{e}}^{-c(\sigma-s)-\sup\limits_{n}\|\zeta^{s}_{n}\|_{H}}
     \Big(L_{B}^{2}\|\overline{X}_{n}(\sigma)-\overline{X}(\sigma)\|_{H}^{2}\\
   & \qquad+2\|A_{1,n}(\sigma,\overline{X}(\sigma))-A_{1}(\sigma,\overline{X}(\sigma))\|_{V_{1}^{*}}
     \|\overline{X}_{n}(\sigma)-\overline{X}(\sigma)\|_{V_{1}}\\
   & \qquad +2\|A_{2,n}(\sigma,\overline{X}(\sigma))-A_{2}(\sigma,\overline{X}(\sigma))\|_{V_{2}^{*}}
     \|\overline{X}_{n}(\sigma)-\overline{X}(\sigma)\|_{V_{2}}\\
   & \qquad +2\|B_{n}(\sigma,\overline{X}(\sigma))-B(\sigma,\overline{X}(\sigma))\|_{L_{2}(U,H)}^{2}\Big)\d\sigma.
\end{align*}
Therefore, in view of Gronwall's lemma, we get
\begin{equation}\label{contii2}
  E\left(\|X_{n}(\tau)-X(\tau)\|_{H}^{2}{\rm{e}}^{-c(\tau-s)-\sup\limits_{n}\|\zeta^{s}_{n}\|_{H}}\right)
  \leq \tilde\xi_{n}{\rm{e}}^{L_{B}^{2}(t-s)}, \quad \rm{for~all}~ \tau\in[s,t],
\end{equation}
where
\begin{align*}
 \tilde\xi_n:=&E\left(\|\zeta_{n}^{s}-\zeta^{s}\|_{H}^{2}{\rm{e}}^{-\sup\limits_{n}\|\zeta^{s}_{n}\|_{H}}\right)\\
    & +2\left(E\int_{s}^{t}\|A_{1,n}(\sigma,\overline{X}(\sigma))
    -A_{1}(\sigma,\overline{X}(\sigma))\|_{V_{1}^{*}}^{\frac{\alpha_{1}}{\alpha_{1}-1}}
    \d\sigma\right)^{\frac{\alpha_{1}-1}{\alpha_{1}}}\\
    &\qquad \times
    \left(E\int_{s}^{t}{\rm{e}}^{-c(\sigma-s)-\sup\limits_{n}\|\zeta^{s}_{n}\|_{H}}
     \|\overline{X}_{n}(\sigma)-\overline{X}(\sigma)\|_{V_{1}}^{\alpha_{1}}\d\sigma\right)^{\frac{1}{\alpha_{1}}}\\
  &  +2\left(E\int_{s}^{t}\|A_{2,n}(\sigma,\overline{X}(\sigma))
    -A_{2}(\sigma,\overline{X}(\sigma))\|_{V_{2}^{*}}^{\frac{\alpha_{2}}{\alpha_{2}-1}}
    \d\sigma\right)^{\frac{\alpha_{2}-1}{\alpha_{2}}}\\
    &\qquad \times
    \left(E\int_{s}^{t}{\rm{e}}^{-c(\sigma-s)-\sup\limits_{n}\|\zeta^{s}_{n}\|_{H}}
    \|\overline{X}_{n}(\sigma)-\overline{X}(\sigma)\|_{V_{2}}^{\alpha_{2}}
    \d\sigma\right)^{\frac{1}{\alpha_{2}}}\\
  &  +2E\int_{s}^{t}{\rm{e}}^{-c(\sigma-s)-\sup\limits_{n}\|\zeta^{s}_{n}\|_{H}}
    \|B_{n}(\sigma,\overline{X}(\sigma))-B(\sigma,\overline{X}(\sigma))\|_{L_{2}(U,H)}^{2}\d\sigma.
\end{align*}
By Lebesgue's dominated convergence theorem, \eqref{bd}, (H4) and Remark \ref{estB} (i), we have
\begin{equation}\label{contii3}
  \lim_{n\rightarrow\infty} \tilde\xi_{n}=0.
\end{equation}

For any $\epsilon>0$, let
$$\tau_{\epsilon}^n:=\inf\{\sigma\geq s :  \|X_{n}(\sigma)-X(\sigma)\|_{H}^{2}{\rm{e}}^{-c(\sigma-s)
  -\sup\limits_{n}\|\zeta^{s}_{n}\|_{H}}\geq\epsilon\}\wedge t.$$
It follows from \cite[Lemma 3.1.3]{PR} and \eqref{contii2} that
\begin{align*}
   & P\left(\sup\limits_{s\leq \sigma\leq t}\|X_{n}(\sigma)-X(\sigma)\|_{H}^{2}
     {\rm{e}}^{-c\left(\sigma-s\right)-\sup\limits_{n}\|\zeta^{s}_{n}\|_{H}}\geq\epsilon\right) \\
   & \leq\frac{1}{\epsilon}E\left(\|X_{n}(\tau_{\epsilon}^n)-X(\tau_{\epsilon}^n)\|_{H}^{2}
     {\rm{e}}^{-c\left(\tau_{\epsilon}^n-s\right)-\sup\limits_{n}\|\zeta^{s}_{n}\|_{H}}\right)\\
   & \le \frac{1}{\epsilon} \tilde \xi_{n}{\rm{e}}^{L_{B}^{2}(t-s)}.
\end{align*}
So by \eqref{contii3} we have
\begin{equation}\label{p0}
 \lim\limits_{n\rightarrow\infty}P\left(\sup\limits_{s\leq \sigma\leq t}
 \|X_{n}(\sigma)-X(\sigma)\|_{H}^{2}{\rm{e}}^{-c\left(\sigma-s\right)-\sup\limits_{n}
 \|\zeta^{s}_{n}\|_{H}}\geq\epsilon\right)=0 .
\end{equation}
Since $P$-a.s. $[0,\infty)\ni t\mapsto {\rm{e}}^{-c\left(t-s\right)-\sup\limits_{n}\|\zeta^{s}_{n}\|_{H}}$
is continuous and strictly positive, \eqref{p0} implies
\begin{equation*}
\lim\limits_{n\rightarrow\infty}\sup\limits_{s\leq\sigma\leq t}\|X_{n}(\sigma)-X(\sigma)\|_{H}^{2}=0
\quad \rm in ~probability.
\end{equation*}
This completes the proof of (ii).

(iii) According to the Skorohod representation theorem, the uniqueness in law of the solutions
for equation \eqref{eqSPDE1} and (ii), we complete the proof of (iii).

\end{proof}

\begin{remark}\rm\label{Remc}
Note that $\lim\limits_{n\rightarrow\infty}d_{BL}(\mathcal{L}(X_{n}),\mathcal{L}(X))=0$ in $Pr(C(\R,H))$
implies
\[
\lim\limits_{n\rightarrow\infty} d_{BL} (\mathcal{L}(X_{n}(t)),\mathcal{L}(X(t))) = 0 \quad\hbox{in } Pr(H) \hbox{ for each }t\in\R,
\]
but not vice versa. However, under the conditions of Theorem \ref{conlemma}, it follows from (iii) that
$\lim\limits_{n\rightarrow\infty}d_{BL}(\mathcal{L}(X_{n}(t)),\mathcal{L}(X(t)))=0$ in $Pr(H)$ for $t\in\R$
if and only if
$\lim\limits_{n\rightarrow\infty}d_{BL}(\mathcal{L}(X_{n}),\mathcal{L}(X))=0$ in $Pr(C(\R,H))$ ---
in other words $X_n$ converges in distribution to $X$ on the path space $C(\R,H)$ if and only if $X_n$
converges in distribution to $X$ on $H$.
Indeed, if for any $s\in\R$ $\lim\limits_{n\rightarrow\infty}d_{BL}(\mathcal{L}(X_{n}(s)),\mathcal{L}(X(s)))=0$ in $Pr(H)$,
then by Theorem \ref{conlemma}, for any interval $[s,t]\subset\R$ we have
$$\lim\limits_{n\rightarrow\infty}d_{BL}(\mathcal{L}(X_{n}),\mathcal{L}(X))=0
\quad {\rm{in}} ~ Pr(C([s,t],H)).$$
It follows from the arbitrariness of $[s,t]$ that
$$\lim_{n\rightarrow\infty}d_{BL}(\mathcal{L}(X_{n}),\mathcal{L}(X))=0
\quad {\rm{in}}~ Pr(C(\mathbb{R},H)).$$
\end{remark}

Now we discuss the $L^{2}$-bounded solution to equation \eqref{eqSPDE1}. In the following,
we need the stronger condition:

(H2$'$) (Strict monotonicity) There exists a constant $\lambda>0$ such that for all
$u$, $v\in V$, $t\in \mathbb{R}$
\begin{align*}
  2_{V^{*}}\langle A(t,u)-A(t,v),u-v\rangle_{V}+\|B(t,u)-B(t,v)\|^{2}_{L_{2}(U,H)}  \leq -\lambda\|u-v\|^{2}_{H}.
\end{align*}

\begin{lemma}
Suppose that {\rm{(H2$'$)}}, {\rm{(H3)}} and {\rm{(H4)}} hold. Let $\eta\in(0,\lambda)$.
Then there exists a constant $M_{0,\eta}\in(0,\infty)$, depending only on $\eta$, $c_{2}$,
$c_{3}$, $c_{2}'$, $c_{3}'$, $M_{0}$, $\alpha_{i}$, $i=1,2$ such that
\begin{equation}\label{ineq1}
2_{V^{*}}\langle A(t,v),v\rangle_{V}+\|B(t,v)\|^{2}_{L_{2}(U,H)}\leq -\eta\|v\|^{2}_{H}+M_{0,\eta}
\end{equation}
for all $v\in V$, $t\in \mathbb{R}$.
\end{lemma}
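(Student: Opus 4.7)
The plan is to derive the improved coercivity estimate by combining the strict monotonicity (H2'), applied at the pair $(v,0)$, with the original coercivity (H3) via a convex combination. The bound from (H2') supplies the strict negativity in $\|v\|_H^2$ at the price of cross terms at $0$, while the negative higher-order terms $-c_2\|v\|_{V_1}^{\alpha_1}-c_2'\|v\|_{V_2}^{\alpha_2}$ from (H3) will swallow those cross terms.

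First, substituting $u=v$ and replacing the second argument by $0$ in (H2'), then expanding $\|B(t,v)-B(t,0)\|_{L_2(U,H)}^2$ and rearranging, I get
\[
2_{V^{*}}\langle A(t,v),v\rangle_V+\|B(t,v)\|_{L_2(U,H)}^2\le -\lambda\|v\|_H^2+2_{V^{*}}\langle A(t,0),v\rangle_V+2\langle B(t,v),B(t,0)\rangle_{L_2(U,H)}-\|B(t,0)\|_{L_2(U,H)}^2.
\]
Multiplying this inequality by $\mu\in(0,1)$ and adding $(1-\mu)$ times (H3), the coefficient of $\|v\|_H^2$ on the right becomes $-\mu\lambda+(1-\mu)c_1$. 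Since $\eta<\lambda$, one can pick $\mu\in(0,1)$ making this quantity strictly smaller than $-\eta$ with a little slack to spare (explicitly $\mu\ge(c_1+\eta)/(\lambda+c_1)$ when $\lambda+c_1>0$, and $\mu=0$ when $c_1\le-\eta$).

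Next I handle the cross terms. Evaluating (H4) at $v=0$ gives $\|A_i(t,0)\|_{V_i^{*}}\le M_0$, so $2\mu|_{V^{*}}\langle A(t,0),v\rangle_V|\le 2\mu M_0(\|v\|_{V_1}+\|v\|_{V_2})$, and Young's inequality absorbs this into $-\tfrac{1}{2}(1-\mu)(c_2\|v\|_{V_1}^{\alpha_1}+c_2'\|v\|_{V_2}^{\alpha_2})$ up to an additive constant. Evaluating (H3) at $v=0$ gives $\|B(t,0)\|_{L_2(U,H)}^2\le M_0$, and for any $\delta>0$ Young yields
\[
2\mu|\langle B(t,v),B(t,0)\rangle_{L_2(U,H)}|\le \delta\|B(t,v)\|_{L_2(U,H)}^2+\mu^2 M_0/\delta,
\]
while $-\mu\|B(t,0)\|_{L_2(U,H)}^2\le 0$ is simply dropped. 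The residual $\delta\|B(t,v)\|_{L_2(U,H)}^2$ is then dominated, via Remark \ref{estB}(ii), by a linear combination of $\|v\|_H^2$, $\|v\|_{V_i}^{\alpha_i}$, $\|v\|_{V_i}$, and a constant.

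Finally, I take $\delta>0$ small enough (depending on $\mu$, $c_1$, $c_2$, $c_2'$, $c_3$, $c_3'$) so that the $\delta$-perturbation to the $\|v\|_H^2$ coefficient fits inside the slack selected in the first step, and the $\delta$-perturbation of each $\|v\|_{V_i}^{\alpha_i}$ coefficient is dominated by the remaining quarter of $-(1-\mu)c_2^{(i)}$; a last Young application absorbs the lower-order $\|v\|_{V_i}$ contributions into these negative higher-order terms, and the sum of all residual constants defines $M_{0,\eta}$. The key obstacle is the $B$-cross term $\langle B(t,v),B(t,0)\rangle$: because $\|B(t,v)\|_{L_2(U,H)}$ is not uniformly bounded in $v$, one cannot dispatch it directly and must re-estimate it through Remark \ref{estB}(ii); the strict inequality $\eta<\lambda$ is precisely what provides enough slack to absorb the resulting small $\delta$-perturbation.
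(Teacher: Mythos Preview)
Your argument is correct and uses exactly the same ingredients as the paper: apply (H2$'$) at the pair $(v,0)$, bound the cross terms $2_{V^*}\langle A(t,0),v\rangle_V$ and $2\langle B(t,v),B(t,0)\rangle_{L_2(U,H)}$ via (H4), (H3) at $v=0$, Young's inequality, and Remark~\ref{estB}(ii), and let (H3) supply the negative $\|v\|_{V_i}^{\alpha_i}$ terms needed to absorb the residuals. The only organizational difference is how (H3) enters: you take a convex combination $\mu\cdot\text{(H2$'$)}+(1-\mu)\cdot\text{(H3)}$ at the outset so that the negative $-(1-\mu)c_2^{(i)}\|v\|_{V_i}^{\alpha_i}$ terms are available directly, whereas the paper first works solely from (H2$'$), ends up with small positive multiples of $\|v\|_{V_i}^{\alpha_i}$, and then invokes (H3) a second time to re-express those in terms of the original left-hand side before rearranging. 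Your route is a bit more transparent, since it avoids that self-referential substitution; the paper's route has the minor advantage that one never needs to track the parameter $\mu$. Both yield the same constant dependence.
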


\begin{proof}
Let $v\in V$ and $\epsilon\in(0,1)$. Employing Young's inequality, (H2$'$), (H3) and (H4), we obtain
\begin{align*}
   & 2_{V^{*}}\langle A(t,v),v\rangle_{V}+\|B(t,v)\|^{2}_{L_{2}(U,H)} \\
   & \leq2_{V^{*}}\langle A(t,v)-A(t,0),v\rangle_{V}+2_{V^{*}}\langle A(t,0),v\rangle_{V}
     +\|B(t,v)-B(t,0)\|^{2}_{L_{2}(U,H)}\\
   & \quad -\|B(t,0)\|^{2}_{L_{2}(U,H)}+2\langle B(t,v),B(t,0)\rangle_{L_{2}(U,H)}\\
   & \leq-\lambda\|v\|^{2}_{H}+2_{V_{1}^{*}}\langle A_{1}(t,0),v\rangle_{V_{1}}
     +2_{V_{2}^{*}}\langle A_{2}(t,0),v\rangle_{V_{2}}\\
   & \quad+2\|B(t,v)\|_{L_{2}(U,H)}\|B(t,0)\|_{L_{2}(U,H)}-\|B(t,0)\|^{2}_{L_{2}(U,H)}\\
   & \leq-\lambda\|v\|^{2}_{H}+2\|A_{1}(t,0)\|_{V_{1}^{*}}\|v\|_{V_{1}}
     +2\|A_{2}(t,0)\|_{V_{2}^{*}}\|v\|_{V_{2}}+2\|B(t,v)\|_{L_{2}(U,H)}\|B(t,0)\|_{L_{2}(U,H)}\\
   & \leq-\lambda\|v\|^{2}_{H}+\epsilon\|v\|_{V_{1}}^{\alpha_{1}}+C_{\epsilon}^{1}
     \|A_{1}(t,0)\|_{V_{1}^{*}}^{\frac{\alpha_{1}}{\alpha_{1}-1}}+\epsilon\|v\|_{V_{2}}^{\alpha_{2}}\\
   & \quad +C_{\epsilon}^{2}\|A_{2}(t,0)\|_{V_{2}^{*}}^{\frac{\alpha_{2}}{\alpha_{2}-1}}
      +\epsilon\|B(t,v)\|_{L_{2}(U,H)}^{2}+C_{\epsilon}^{3}\|B(t,0)\|_{L_{2}(U,H)}^{2}\\
   & \leq-\lambda\|v\|^{2}_{H}+\epsilon\|v\|_{V_{1}}^{\alpha_{1}}
     +C_{\epsilon}^{1}M_{0}^{\frac{\alpha_{1}}{\alpha_{1}-1}}
     +\epsilon\|v\|_{V_{2}}^{\alpha_{2}}+C_{\epsilon}^{2}M_{0}^{\frac{\alpha_{2}}{\alpha_{2}-1}}
     +\epsilon c_{1}\|v\|^{2}_{H}+\epsilon\left(2c_{3}-c_{2}
     +\frac{2}{\alpha_{1}}\right)\|v\|^{\alpha_{1}}_{V_{1}}\\
   & \quad  +\epsilon\left(2c_{3}'-c'_{2}+\frac{2}{\alpha_{2}}\right)\|v\|^{\alpha_{2}}_{V_{2}}+\epsilon M_{0}
      +\epsilon\frac{2\left(\alpha_{1}-1\right)}{\alpha_{1}}M_{0}^{\frac{\alpha_{1}}{\alpha_{1}-1}}
       +\epsilon\frac{2\left(\alpha_{2}-1\right)}{\alpha_{2}}M_{0}^{\frac{\alpha_{2}}{\alpha_{2}-1}}+C_{\epsilon}^{3}M_{0}\\
   & \leq\left(-\lambda+\epsilon c_{1}\right)\|v\|^{2}_{H}
     +\epsilon\left(1+2c_{3}-c_{2}+\frac{2}{\alpha_{1}}\right)\|v\|_{V_{1}}^{\alpha_{1}}
       +\epsilon\left(1+2c_{3}'-c_{2}'+\frac{2}{\alpha_{2}}\right)\|v\|_{V_{2}}^{\alpha_{2}}+\widetilde{M_{0}}\\
   & \leq\left(-\lambda+\epsilon c_{1}\right)\|v\|^{2}_{H}+\epsilon\left(1+2c_{3}-c_{2}+\frac{2}{\alpha_{1}}\right)
     \bigg[-\frac{1}{c_{2}}\left(2_{V^{*}}\langle A(t,v),v\rangle_{V}+\|B(t,v)\|^{2}_{L_{2}(U,H)}\right)\\
   & \qquad +\frac{c_{1}}{c_{2}}\|v\|^{2}_{H}+\frac{1}{c_{2}}M_{0}\bigg]+\epsilon\left(1+2c_{3}'-c_{2}'
     +\frac{2}{\alpha_{2}}\right)\bigg[-\frac{1}{c_{2}'}\Big(2_{V^{*}}\langle A(t,v),v\rangle_{V}\\
   & \qquad +\|B(t,v)\|^{2}_{L_{2}(U,H)}\Big)+\frac{c_{1}}{c_{2}'}\|v\|^{2}_{H}
     +\frac{1}{c_{2}'}M_{0}\bigg]+\widetilde{M_{0}}\\
   & \leq\left(-\lambda+\epsilon c_{1}+\epsilon\left(1+2c_{3}-c_{2}
     +\frac{2}{\alpha_{1}}\right)\frac{c_{1}}{c_{2}}+
     \epsilon\left(1+2c_{3}'-c_{2}'+\frac{2}{\alpha_{2}}\right)\frac{c_{1}}{c_{2}'}\right)\|v\|_{H}^{2}\\
   & \quad -\left[\frac{\epsilon}{c_{2}}\left(1+2c_{3}-c_{2}+\frac{2}{\alpha_{1}}\right)
     +\frac{\epsilon}{c_{2}'}\left(1+2c_{3}'-c_{2}'+\frac{2}{\alpha_{2}}\right)\right]\Big(2_{V^{*}}\langle A(t,v),v\rangle_{V}\\
   & \qquad +\|B(t,v)\|^{2}_{L_{2}(U,H)}\Big)+\widetilde{M_{0}}',
\end{align*}
where $C_{\epsilon}^{1}$, $C_{\epsilon}^{2}$, $C_{\epsilon}^{3}$, $\widetilde{M_{0}}$, $\widetilde{M_{0}}'\in(0,\infty)$
are constants independent of $v$ and $t$.
Hence taking $\epsilon$ small enough we can find a constant $M_{0,\eta}$
such that for all $v\in V$, $t\in \mathbb{R}$
$$2_{V^{*}}\langle A(t,v),v\rangle_{V}+\|B(t,v)\|^{2}_{L_{2}(U,H)}\leq -\eta\|v\|^{2}_{H}+M_{0,\eta}.$$
\end{proof}

\begin{lemma}
Assume that {\rm{(H1)}}, {\rm{(H2$'$)}}, {\rm{(H3)}} and {\rm{(H4)}} hold.
Let $\zeta_{s}\in L^{2}(\Omega, \mathcal{F}_{s}, P;H)$ and
$X(t,s,\zeta_{s})$, $t\geq s$ be the solution to the following Cauchy problem
\begin{equation*}
  \left\{
   \begin{aligned}
   &\ \d X(t)=A(t,X(t))\d t+B(t,X(t))\d W(t)\\
  &\ X(s)=\zeta_{s}.
   \end{aligned}
   \right.
  \end{equation*}
Then there exists a constant $M_{1}>0$,
depending only on $M_{0,\eta}$ as in \eqref{ineq1}, such that
\begin{equation}\label{ineq2}
E\|X(t,s,\zeta_{s})\|^{2}_{H}\leq {\rm{e}}^{-\eta(t-s)}E\|\zeta_{s}\|^{2}_{H}+M_{1}.
\end{equation}
Furthermore, for some process $X(t)$, $t\in\R$, we have
\begin{equation}\label{L2lim1}
  X(t,-n,0)\rightarrow X(t)\quad {\rm{in}}~ L^{2}(\Omega,P;H).
\end{equation}
\end{lemma}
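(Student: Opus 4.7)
The plan is to handle the two assertions separately. Both arguments proceed by applying It\^o's formula (Remark \ref{estB}(iv)) to an exponentially weighted squared norm and invoking, respectively, the dissipative estimate \eqref{ineq1} and the strict monotonicity (H2$'$).

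For the bound \eqref{ineq2}, I would apply It\^o's formula together with the product rule to ${\rm{e}}^{\eta(\tau-s)}\|X(\tau,s,\zeta_s)\|_H^2$. As in the proof of Theorem \ref{conlemma}, I localize with the stopping times $\gamma(R):=\inf\{\tau\geq s:\|X(\tau,s,\zeta_s)\|_H>R\}$ so that the stochastic integral becomes a martingale with vanishing expectation. The resulting drift terms combine to
$${\rm{e}}^{\eta(\sigma-s)}\Big(\eta\|X(\sigma)\|_H^2+2_{V^{*}}\langle A(\sigma,\overline{X}(\sigma)),\overline{X}(\sigma)\rangle_V+\|B(\sigma,\overline{X}(\sigma))\|_{L_2(U,H)}^2\Big),$$
which by \eqref{ineq1} is pointwise bounded by ${\rm{e}}^{\eta(\sigma-s)}M_{0,\eta}$. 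After taking expectation, letting $R\to\infty$ via Fatou's lemma, and integrating, one obtains
$$E\big({\rm{e}}^{\eta(t-s)}\|X(t,s,\zeta_s)\|_H^2\big)\leq E\|\zeta_s\|_H^2+\frac{M_{0,\eta}}{\eta}\big({\rm{e}}^{\eta(t-s)}-1\big),$$
which rearranges to \eqref{ineq2} with $M_1:=M_{0,\eta}/\eta$.

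For the convergence \eqref{L2lim1}, fix $t\in\R$ and consider $n>m>|t|$. Pathwise uniqueness (Remark \ref{estB}(iii)) gives the cocycle identity $X(t,-n,0)=X(t,-m,X(-m,-n,0))$. Applying the same It\^o--localization scheme to ${\rm{e}}^{\lambda(\tau+m)}\|X(\tau,-m,\xi_1)-X(\tau,-m,\xi_2)\|_H^2$ with $\xi_1:=X(-m,-n,0)$ and $\xi_2:=0$, I would use (H2$'$) to show that the drift is non-positive, so the ${\rm{e}}^{\lambda(\cdot+m)}$-weighted squared difference is non-increasing in expectation. This yields the contraction estimate
$$E\|X(t,-n,0)-X(t,-m,0)\|_H^2\leq {\rm{e}}^{-\lambda(t+m)}E\|X(-m,-n,0)\|_H^2\leq M_1{\rm{e}}^{-\lambda(t+m)},$$
where the second inequality uses \eqref{ineq2} applied to $X(-m,-n,0)$ with initial datum $0$ at time $-n$. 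Letting $m\to\infty$ (with $n>m$) shows that $\{X(t,-n,0)\}$ is a Cauchy sequence in $L^2(\Omega,P;H)$, so its limit defines the desired process $X(t)$.

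The main technical point, as is typical in the variational setting, is justifying the exponentially weighted It\^o formula: one needs the localization to ensure the required $V_1^{\alpha_1}$- and $V_2^{\alpha_2}$-integrability of the drift terms on $[s,\tau\wedge\gamma(R)]$ before taking expectations and passing to the limit $R\to\infty$. Once this routine step is handled, the estimates drop out cleanly from \eqref{ineq1} and (H2$'$).
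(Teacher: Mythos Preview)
Your proposal is correct and follows essentially the same route as the paper: apply the product rule and It\^o's formula to ${\rm e}^{\eta(t-s)}\|X(t,s,\zeta_s)\|_H^2$, invoke \eqref{ineq1} to obtain \eqref{ineq2} with $M_1=M_{0,\eta}/\eta$, and then for the Cauchy property compare $X(t,-n,0)$ with $X(t,-m,0)$ from time $-m$ onward using the exponentially weighted difference and (H2$'$), finishing with the bound $E\|X(-m,-n,0)\|_H^2\le M_1$ from \eqref{ineq2}. Your explicit mention of the localization/Fatou step is in fact more careful than the paper's presentation of this lemma, though the paper does carry out an identical localization argument earlier in the proof of Theorem \ref{conlemma}.
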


\begin{proof}
By the product rule, It\^o's formula and \eqref{ineq1}, we have
\begin{align*}
& E\left({\rm{e}}^{\eta(t-s)}\|X(t,s,\zeta_{s})\|^{2}_{H}\right)\\
& =E\|\zeta_{s}\|^{2}_{H}+\int^{t}_{s}\eta {\rm{e}}^{\eta(\sigma-s)}E\|X(\sigma,s,\zeta_{s})\|^{2}_{H}\d\sigma\\
& \quad +E\int^{t}_{s}{\rm{e}}^{\eta(\sigma-s)}\left(2_{V^{*}}\langle A(\sigma,\overline{X}(\sigma,s,\zeta_{s})),
  \overline{X}(\sigma,s,\zeta_{s})\rangle_{V}
  +\|B(\sigma,\overline{X}(\sigma,s,\zeta_{s}))\|^{2}_{L_{2}(U,H)}\right)\d\sigma\\
& \leq E\|\zeta_{s}\|^{2}_{H}+\int^{t}_{s}{\rm{e}}^{\eta(\sigma-s)}M_{0,\eta}\d\sigma\\
& \leq E\|\zeta_{s}\|^{2}_{H}+\frac{M_{0,\eta}}{\eta}{\rm{e}}^{\eta(t-s)}.
\end{align*}
Let $M_{1}:=\frac{M_{0,\eta}}{\eta}$, we obtain
$$E\|X(t,s,\zeta_{s})\|^{2}_{H}\leq {\rm{e}}^{-\eta(t-s)}E\|\zeta_{s}\|^{2}_{H}+M_{1}. $$

For $t\geq-m\geq-n$, in view of It\^o's formula, we have
\begin{align*}
& E\|X(t,-n,0)-X(t,-m,0)\|^{2}_{H}\\
& =E\|X(-m,-n,0)\|^{2}_{H}\\
& \quad +E\int^{t}_{-m}\Big(2_{V^{*}}\langle A(\sigma,\overline{X}(\sigma,-n,0))
   -A(\sigma,\overline{X}(\sigma,-m,0)),\overline{X}(\sigma,-n,0)-\overline{X}(\sigma,-m,0)\rangle_{V}\\
& \qquad +\|B(\sigma,\overline{X}(\sigma,-n,0))-B(\sigma,\overline{X}(\sigma,-m,0))\|^{2}_{L_{2}(U,H)}\Big)\d\sigma.
\end{align*}
By (H2$'$) and the product rule, we obtain
\begin{align*}
   &  E\left({\rm{e}}^{\lambda(t+m)}\|X(t,-n,0)-X(t,-m,0)\|^{2}_{H}\right)  \\
   & =E\|X(-m,-n,0)\|^{2}_{H}+\int^{t}_{-m}\lambda {\rm{e}}^{\lambda(\sigma+m)}E\|X(\sigma,-n,0)
     -X(\sigma,-m,0)\|^{2}_{H}\d\sigma\\
   & \quad +E\int^{t}_{-m}{\rm{e}}^{\lambda(\sigma+m)}\Big(2_{V^{*}}\langle A(\sigma,\overline{X}(\sigma,-n,0))
     -A(\sigma,\overline{X}(\sigma,-m,0)),\overline{X}(\sigma,-n,0)-\overline{X}(\sigma,-m,0)\rangle_{V}\\
   & \qquad +\|B(\sigma,\overline{X}(\sigma,-n,0))-B(\sigma,\overline{X}(\sigma,-m,0))\|^{2}_{L_{2}(U,H)}\Big)\d\sigma\\
   & \leq E\|X(-m,-n,0)\|^{2}_{H}.
\end{align*}
Now using  \eqref{ineq2} we deduce
 \begin{equation*}
   E\left(\|X(t,-n,0)-X(t,-m,0)\|^{2}_{H}\right)\leq E\|X(-m,-n,0)\|^{2}_{H}
   {\rm{e}}^{-\lambda(t+m)}\leq M_{1}{\rm{e}}^{-\lambda(t+m)}.
 \end{equation*}
Letting $n>m$, $m\rightarrow\infty$, we have
$$E\left(\|X(t,-n,0)-X(t,-m,0)\|^{2}_{H}\right)\rightarrow 0.$$
Therefore, there exists a process $X(t)$, $t\in\R$ such that
$$X(t,-n,0)\rightarrow X(t) \quad {\rm {in}}~L^{2}(\Omega,P;H).$$
And it follows from \eqref{ineq2} that $\sup\limits_{t\in\mathbb{R}}E\|X(t)\|_{H}^{2}\leq M_{1}$.
\end{proof}

We now show that the limit process $X(\cdot)$ in \eqref{L2lim1} is a solution to equation \eqref{eqSPDE1}.
For this we need some uniform estimates.

\begin{lemma}\label{Bestimate}
Consider equation \eqref{eqSPDE1}.
Assume that {\rm{(H1)}}, {\rm{(H2$'$)}}, {\rm{(H3)}} and {\rm{(H4)}} hold.
For any fixed interval $[a,b]\subset \mathbb{R}$,
there exists a constant $M_{2}$, depending only on $M_{1}$, $M_{0}$, $c_{1}$ and $[a,b]$, such that
$$\sup_{t\in\R}E\|X(t,-n,0)\|^{2}_{H}+\sum_{i=1,2}\|\overline{X}(\cdot,-n,0)\|_{K_{i}}
+\sum_{i=1,2}\|A_{i}(\cdot,\overline{X}(\cdot,-n,0))\|_{K_{i}^{*}}\leq M_{2}$$
for all $-n\leq a$, where
$K_{i}:=L^{\alpha_{i}}([a,b]\times\Omega,\d t\otimes P;V_{i}),$
$K_{i}^{*}:=L^{\frac{\alpha_{i}}{\alpha_{i}-1}}([a,b]\times\Omega,\d t\otimes P;V_{i}^{*})$, $i=1,2.$
\end{lemma}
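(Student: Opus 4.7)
The plan is to extract the three pieces of the estimate separately, all from (H3), (H4) and the bound \eqref{ineq2} already established.

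First, the term $\sup_{t\in\R}E\|X(t,-n,0)\|^{2}_{H}$ is immediate from \eqref{ineq2}: taking $\zeta_{s}=0$ and $s=-n$ gives
$E\|X(t,-n,0)\|^{2}_{H}\le M_{1}$ for every $t\ge -n$, so this contribution is bounded by $M_{1}$ uniformly in $n$ (and for $t<-n$ one simply does not need a bound, since the condition $-n\le a$ restricts the relevant time interval to $[a,b]\subset[-n,\infty)$).

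Second, to control $\|\overline{X}(\cdot,-n,0)\|_{K_{i}}$ I would apply the It\^o formula of Remark \ref{estB}(iv) on $[a,b]$ to the solution $X(\cdot,-n,0)$, take expectation (the stochastic integral being a martingale after a standard localization argument), and then invoke the coercivity condition (H3). This gives
\begin{align*}
E\|X(b,-n,0)\|_{H}^{2}
&+E\int_{a}^{b}\!\Bigl(c_{2}\|\overline{X}(\sigma,-n,0)\|_{V_{1}}^{\alpha_{1}}+c_{2}'\|\overline{X}(\sigma,-n,0)\|_{V_{2}}^{\alpha_{2}}\Bigr)\d\sigma\\
&\le E\|X(a,-n,0)\|_{H}^{2}+c_{1}E\int_{a}^{b}\|\overline{X}(\sigma,-n,0)\|_{H}^{2}\d\sigma+M_{0}(b-a).
\end{align*}
Using the first step, the right-hand side is bounded by $M_{1}+|c_{1}|M_{1}(b-a)+M_{0}(b-a)$, which is a constant depending only on $M_{1}$, $M_{0}$, $c_{1}$ and $[a,b]$. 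Dividing through by $\min(c_{2},c_{2}')$ yields the desired bound on $\|\overline{X}(\cdot,-n,0)\|_{K_{i}}^{\alpha_{i}}$ for $i=1,2$.

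Third, for the $K_{i}^{*}$ pieces I would apply the boundedness condition (H4) pointwise and raise to the power $\alpha_{i}/(\alpha_{i}-1)$, obtaining
\[
\|A_{i}(\sigma,\overline{X}(\sigma,-n,0))\|_{V_{i}^{*}}^{\frac{\alpha_{i}}{\alpha_{i}-1}}
\le C\bigl(\|\overline{X}(\sigma,-n,0)\|_{V_{i}}^{\alpha_{i}}+M_{0}^{\frac{\alpha_{i}}{\alpha_{i}-1}}\bigr)
\]
for a constant $C$ depending only on $c_{3}$, $c_{3}'$ and $\alpha_{i}$. Integrating over $[a,b]\times\Omega$ and using the bound from the previous step on $\|\overline{X}(\cdot,-n,0)\|_{K_{i}}$ produces a uniform bound on $\|A_{i}(\cdot,\overline{X}(\cdot,-n,0))\|_{K_{i}^{*}}$.

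No step is genuinely difficult: the only subtlety is the localization needed to justify that the expectation of the stochastic integral in It\^o's formula vanishes, which is handled exactly as in the derivation of \eqref{conlemeq4} by introducing stopping times $\gamma^{n}(R):=\inf\{\tau\ge a:\|X(\tau,-n,0)\|_{H}>R\}$, applying It\^o's formula up to $t\wedge\gamma^{n}(R)$, and then letting $R\to\infty$ via Fatou's lemma combined with the already-established $L^{2}$ bound \eqref{ineq2}. Summing the three contributions yields the claim with an explicit constant $M_{2}=M_{2}(M_{1},M_{0},c_{1},[a,b])$ independent of $n$.
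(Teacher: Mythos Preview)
Your proposal is correct and follows essentially the same route as the paper: It\^o's formula on $[a,b]$ together with (H3) for the $K_{i}$ bounds, then (H4) for the $K_{i}^{*}$ bounds, with \eqref{ineq2} supplying the uniform $H$-bound. The only cosmetic difference is that the paper multiplies by the integrating factor $e^{-c_{1}(t-a)}$ before applying It\^o's formula, so that the $c_{1}\|X\|_{H}^{2}$ term cancels via the product rule; you instead bound this term directly by $|c_{1}|M_{1}(b-a)$ using the $L^{2}$ bound already in hand, which is just as valid and arguably cleaner.
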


\begin{proof}
According to (H3), we have
\begin{align*}
   & E\left({\rm{e}}^{-c_{1}(t-a)}\|X(t,-n,0)\|^{2}_{H}\right) \\
   & = E\|X(a,-n,0)\|^{2}_{H}-\int^{t}_{a}c_{1}{\rm{e}}^{-c_{1}(\sigma-a)}E\|X(\sigma,-n,0)\|^{2}_{H}\d\sigma\\
   & \quad +\int^{t}_{a}{\rm{e}}^{-c_{1}(\sigma-a)}E\left(2_{V^{*}}\langle A(\sigma,\overline{X}(\sigma,-n,0)),
     \overline{X}(\sigma,-n,0)\rangle_{V}+
     \|B(\sigma,\overline{X}(\sigma,-n,0))\|^{2}_{L_{2}(U,H)}\right)\d\sigma \\
   & \leq E\|X(a,-n,0)\|^{2}_{H}
     -c_{2}\int^{t}_{a}{\rm{e}}^{-c_{1}(\sigma-a)}E\|\overline{X}(\sigma,-n,0)\|^{\alpha_{1}}_{V_{1}}\d\sigma\\
   & \quad -c_{2}'\int^{t}_{a}{\rm{e}}^{-c_{1}(\sigma-a)}E\|\overline{X}(\sigma,-n,0)\|^{\alpha_{2}}_{V_{2}}\d\sigma
     +\int^{t}_{a}{\rm{e}}^{-c_{1}(\sigma-a)}M_{0}\d\sigma. \\
\end{align*}
Therefore,
\begin{align*}
   & E\left({\rm{e}}^{-c_{1}(t-a)}\|X(t,-n,0)\|^{2}_{H}\right)+c_{2}\int^{t}_{a}{\rm{e}}^{-c_{1}(\sigma-a)}
     E\|\overline{X}(\sigma,-n,0)\|^{\alpha_{1}}_{V_{1}}\d\sigma\\
   & \quad +c_{2}'\int^{t}_{a}{\rm{e}}^{-c_{1}(\sigma-a)}E\|\overline{X}(\sigma,-n,0)\|^{\alpha_{2}}_{V_{2}}\d\sigma \\
   & \leq E\|X(a,-n,0)\|^{2}_{H}+\int^{t}_{a}{\rm{e}}^{-c_{1}(\sigma-a)}M_{0}\d\sigma.
\end{align*}
In view of (H4), we complete the proof.
\end{proof}

\begin{theorem}\label{Boundedth}
Consider equation \eqref{eqSPDE1}.
Suppose that {\rm{(H1)}}, {\rm{(H2$'$)}}, {\rm{(H3)}} and {\rm{(H4)}} hold, then there exists a
unique $L^{2}$-bounded continuous H-valued solution $X(t)$, $t\in\mathbb{R}$ to
equation \eqref{eqSPDE1}. Moreover, the mapping $\widehat{\mu}:\mathbb{R}\rightarrow Pr(H)$,
defined by $\widehat{\mu}(t):=P\circ[X(t)]^{-1}$, is unique with the following properties:
\begin{enumerate}
\item $L^{2}$-boundedness: ~$\sup\limits_{t\in \mathbb{R}}\int_{H}\|x\|^{2}_{H}\widehat{\mu}(t)(\d x)<+\infty$;
\item Flow property: ~$\mu(t,s,\widehat{\mu}(s))=\widehat{\mu}(t)$ for all $t\geq s$.
\end{enumerate}
Here $\mu(t,s,\mu_{0})$ denotes the distribution of $X(t,s,\zeta_{s})$ on $H$,
with $\mu_{0}=P\circ\zeta^{-1}_{s}$.
\end{theorem}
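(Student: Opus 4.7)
My plan is to verify that the process $X(t)$ constructed in \eqref{L2lim1} is the desired $L^{2}$-bounded solution and then read off the properties of $\widehat{\mu}$. The argument proceeds in three steps: identify $X$ as a variational solution by combining the flow property of the Cauchy problem with Theorem \ref{conlemma}, prove uniqueness using the strict monotonicity (H2$'$), and finally deduce the measure-theoretic claims.

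For the first step, I would exploit pathwise uniqueness of the Cauchy problem (Remark \ref{estB}(iii)) to write the flow identity
\[
X(t,-n,0)=X(t,s,X(s,-n,0))\quad P\text{-a.s., for all } -n\leq s\leq t.
\]
By \eqref{L2lim1}, as $n\to\infty$ the left-hand side converges to $X(t)$ in $L^{2}(\Omega,P;H)$, and likewise $X(s,-n,0)\to X(s)$ in $L^{2}$. Applying Theorem \ref{conlemma}(i) with $A_{n}=A$, $B_{n}=B$ and initial data $\zeta_{n}^{s}=X(s,-n,0)$, $\zeta^{s}=X(s)$ yields
\[
\lim_{n\to\infty}E\sup_{s\leq\tau\leq t}\|X(\tau,-n,0)-X(\tau,s,X(s))\|_{H}^{2}=0.
\]
Matching limits gives $X(t)=X(t,s,X(s))$ $P$-a.s. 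Since $s\in\mathbb{R}$ is arbitrary and $X(\cdot,s,X(s))$ is a continuous $H$-valued $(\mathcal{F}_{t})$-adapted variational solution on $[s,\infty)$ in the sense of Definition \ref{vsolution}, we conclude that $X$ is a continuous $L^{2}$-bounded solution of \eqref{eqSPDE1} on all of $\mathbb{R}$, with $\sup_{t\in\mathbb{R}}E\|X(t)\|_{H}^{2}\leq M_{1}$ from \eqref{ineq2}.

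For uniqueness, let $Y$ be any other $L^{2}$-bounded continuous solution. Applying the It\^o formula from Remark \ref{estB}(iv) to $\|X(\cdot)-Y(\cdot)\|_{H}^{2}$ together with the product rule and the strict monotonicity (H2$'$) gives
\[
E\|X(t)-Y(t)\|_{H}^{2}\leq e^{-\lambda(t-s)}E\|X(s)-Y(s)\|_{H}^{2}\quad \text{for all } s\leq t.
\]
Because both suprema $\sup_{r}E\|X(r)\|_{H}^{2}$ and $\sup_{r}E\|Y(r)\|_{H}^{2}$ are finite, sending $s\to-\infty$ forces $E\|X(t)-Y(t)\|_{H}^{2}=0$, so $X\equiv Y$. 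Turning to $\widehat{\mu}$: the $L^{2}$-boundedness is $\int_{H}\|x\|_{H}^{2}\widehat{\mu}(t)(\d x)=E\|X(t)\|_{H}^{2}\leq M_{1}$, and the flow property follows from $X(t)=X(t,s,X(s))$ together with the definition of $\mu(t,s,\cdot)$: $\mu(t,s,\widehat{\mu}(s))=\mathcal{L}(X(t,s,X(s)))=\mathcal{L}(X(t))=\widehat{\mu}(t)$. For uniqueness of the measure-valued map, if $\widehat{\nu}$ is another map satisfying (i) and (ii), I would, on a possibly enlarged probability space, construct $\mathcal{F}_{s}$-measurable initial data $\zeta_{s}\sim\widehat{\mu}(s)$ and $\eta_{s}\sim\widehat{\nu}(s)$ independent of the increments of $W$ after $s$; applying the same contraction estimate to $X(t,s,\zeta_{s})$ and $X(t,s,\eta_{s})$ gives an $L^{2}$-distance bounded by a constant (uniform in $s$) times $e^{-\lambda(t-s)/2}$, whence $d_{BL}(\widehat{\mu}(t),\widehat{\nu}(t))\to0$ as $s\to-\infty$. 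Since the left-hand side does not depend on $s$, it must vanish.

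The main obstacle is the first step. Although the Cauchy argument already produces the candidate process $X(t)$ as an $L^{2}$-limit, it is not a priori clear that this limit satisfies the variational integral equation rather than merely exhibiting the correct marginals. Theorem \ref{conlemma} is tailored to this: it upgrades the pointwise $L^{2}$-convergence of the approximants into uniform-in-time convergence on compact intervals, which is precisely what allows one to pass to the limit and obtain the identity $X(\cdot)=X(\cdot,s,X(s))$ that characterizes $X$ as a genuine solution.
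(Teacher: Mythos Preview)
Your route is correct but genuinely different from the paper's, and in fact more direct. The paper proves that the limit $X$ solves the equation by a weak-compactness argument: on each interval $[a,b]$ it extracts weak limits $Y_i$ of $A_i(\cdot,\overline X(\cdot,-n,0))$ in $K_i^*$ and $Z$ of $B(\cdot,\overline X(\cdot,-n,0))$ in $J$, passes to the limit in the integral equation, and then identifies $Y=A(\cdot,\overline X)$, $Z=B(\cdot,\overline X)$ via Minty's monotonicity trick (inequalities \eqref{Bountheq2}--\eqref{ineq3}). You bypass all of this by exploiting the flow identity $X(t,-n,0)=X(t,s,X(s,-n,0))$ together with continuous dependence on the initial datum, obtaining $X(\cdot)=X(\cdot,s,X(s))$ and hence that $X$ inherits the solution property from the Cauchy problem. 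Your argument is shorter and avoids weak convergence entirely; the paper's argument is the standard variational one and would survive in settings without a contractive estimate on initial data.

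One correction you should make: you invoke Theorem~\ref{conlemma}(i), but that theorem is stated under the extra Lipschitz hypothesis (HL), which is \emph{not} among the assumptions of Theorem~\ref{Boundedth}. What you actually need is only continuous dependence on the initial value with fixed coefficients, and for this (H2$'$) alone suffices: It\^o's formula and the product rule give
\[
E\|X(t,s,\zeta_1)-X(t,s,\zeta_2)\|_H^2\le e^{-\lambda(t-s)}E\|\zeta_1-\zeta_2\|_H^2,
\]
exactly the contraction you already use in your uniqueness step (and which appears in the proof of Lemma~3.3). Replace the appeal to Theorem~\ref{conlemma} by this estimate and your existence argument goes through under (H1), (H2$'$), (H3), (H4) as required. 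The remaining parts---uniqueness of the $L^2$-bounded solution, the flow property of $\widehat\mu$, and uniqueness of $\widehat\mu$ via the $d_{BL}$ bound---match the paper's proof essentially verbatim.
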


\begin{proof}
For any fixed interval $[a,b]\subset \mathbb{R}$, we denote
$$J:=L^{2}([a,b]\times\Omega,\d t\otimes P;L_{2}(U,H)),
\quad K_{i}:=L^{\alpha_{i}}([a,b]\times\Omega,\d t\otimes P;V_{i}),$$
$$K_{i}^{*}:=L^{\frac{\alpha_{i}}{\alpha_{i}-1}}([a,b]\times\Omega,\d t\otimes P;V_{i}^{*}),~i=1,2.$$
According to the reflexivity of $K_{i}$, $i=1,2$, we may assume, going if necessary to a subsequence, that
\begin{enumerate}
\item[(1)] $X(\cdot,-n,0)\rightarrow X(\cdot)$ in $L^{2}([a,b]\times\Omega,\d t\otimes P;H)$ and
      $\overline{X}(\cdot,-n,0)\rightarrow \overline{X}(\cdot) $ weakly in $K_{1}$ and $K_{2}$;
\item[(2)] $A_{i}(\cdot,\overline{X}(\cdot,-n,0))\rightarrow Y_{i}(\cdot)$ weakly in $K_{i}^{*}$, $i=1,2$;
\item[(3)] $B(\cdot,\overline{X}(\cdot,-n,0))\rightarrow Z(\cdot)$ weakly in $J$ and hence
      $$\int^{t}_{a}B(\sigma,\overline{X}(\sigma,-n,0))\d W(\sigma)\rightarrow\int^{t}_{a}Z(\sigma)\d W(\sigma)$$
      weakly* in $L^{\infty}([a,b],\d t;L^{2}(\Omega,P;H))$.
\end{enumerate}

Thus for all $v\in V$, $\varphi\in L^{\infty}([a,b]\times\Omega)$ by Fubini's theorem we get
\begin{align*}
   & E\int^{b}_{a}~_{V^{*}}\langle X(t),\varphi(t)v\rangle_{V}\d t \\
   & =\lim_{n\rightarrow\infty}E\int^{b}_{a}~_{V^{*}}\langle X(t,-n,0),\varphi(t)v\rangle_{V}\d t \\
   & =\lim_{n\rightarrow\infty}E\int^{b}_{a}~_{V^{*}}\langle X(a,-n,0),\varphi(t)v\rangle_{V}\d t
     +\lim_{n\rightarrow\infty}E\int^{b}_{a}\int^{b}_{\sigma}~_{V^{*}}
     \langle A(\sigma,\overline{X}(\sigma,-n,0)),\varphi(t)v\rangle_{V}\d t\d\sigma\\
   & \quad + \lim_{n\rightarrow\infty}E\left(\int^{b}_{a}\langle \int^{t}_{a}
     B(\sigma,\overline{X}(\sigma,-n,0))\d W(\sigma),\varphi(t)v\rangle_{H}\d t\right)\\
   & =E\int^{b}_{a}~_{V^{*}}\langle X(a),\varphi(t)v\rangle_{V}\d t+\lim_{n\rightarrow\infty}E\int^{b}_{a}\left(
     ~_{V_{1}^{*}}\langle A_{1}(\sigma,\overline{X}(\sigma,-n,0)),v\rangle_{V_{1}}\int^{b}_{\sigma}
     \varphi(t)\d t\right)\d\sigma\\
   & \quad +\lim_{n\rightarrow\infty}E\int^{b}_{a}\left(~_{V_{2}^{*}}\langle A_{2}(\sigma,\overline{X}(\sigma,-n,0)),
     v\rangle_{V_{2}}\int^{b}_{\sigma}\varphi(t)\d t\right)\d\sigma\\
   & \quad +E\left(\int^{b}_{a}\langle\int^{t}_{a}Z(\sigma)\d W(\sigma),\varphi(t)v\rangle_{H}\d t\right)\\
   & =E\int^{b}_{a}~_{V^{*}}\langle X(a),\varphi(t)v\rangle_{V}\d t
     +E\int^{b}_{a}\left(~_{V_{1}^{*}}\langle Y_{1}(\sigma),v\rangle_{V_{1}}\int^{b}_{\sigma}\varphi(t)\d t\right)\d\sigma\\
   & \quad +E\int^{b}_{a}\left(~_{V_{2}^{*}}\langle Y_{2}(\sigma),v\rangle_{V_{2}}\int^{b}_{\sigma}\varphi(t)
     \d t\right)\d\sigma+E\left(\int^{b}_{a}\langle \int^{t}_{a}Z(\sigma)\d W(\sigma),\varphi(t)v\rangle_{H}\d t\right).
\end{align*}
Let $Y(\sigma):=Y_{1}(\sigma)+Y_{2}(\sigma)\in W\subset V^{*}$, we have
$$X(t)=X(a)+\int^{t}_{a}Y(\sigma)\d\sigma+\int^{t}_{a}Z(\sigma)\d W(\sigma),
\quad \d t\otimes P\rm{\mbox{-}a.e.}$$
Thus, it remains to verify that
$$Y=A(\cdot,\overline{X}),\quad Z=B(\cdot,\overline{X}),\quad\d t\otimes P\rm{\mbox{-}a.e.}$$
To this end, for any $\phi\in K_{1}\cap K_{2}\cap L^{2}([a,b]\times\Omega,\d t\otimes P;H)$, we have
\begin{align*}
   & E\|X(t,-n,0)\|^{2}_{H}-E\|X(a,-n,0)\|^{2}_{H} \\
   & =E\int^{t}_{a}\Big(2_{V_{1}^{*}}\langle A_{1}(\sigma,\overline{X}(\sigma,-n,0)),
     \overline{X}(\sigma,-n,0)\rangle_{V_{1}}
     +2_{V_{2}^{*}}\langle A_{2}(\sigma,\overline{X}(\sigma,-n,0)),
     \overline{X}(\sigma,-n,0)\rangle_{V_{2}}\\
   & \qquad +\|B(\sigma,\overline{X}(\sigma,-n,0))\|^{2}_{L_{2}(U,H)}\Big)\d\sigma\\
   & \leq E\int^{t}_{a}\Big[2_{V_{1}^{*}}\langle A_{1}(\sigma,\overline{X}(\sigma,-n,0))-A_{1}(\sigma,\phi(\sigma)),
     \overline{X}(\sigma,-n,0)-\phi(\sigma)\rangle_{V_{1}}\\
   & \qquad+2_{V_{2}^{*}}\langle A_{2}(\sigma,\overline{X}(\sigma,-n,0))-A_{2}(\sigma,\phi(\sigma)),
     \overline{X}(\sigma,-n,0)-\phi(\sigma)\rangle_{V_{2}}\\
   & \qquad+\|B(\sigma,\overline{X}(\sigma,-n,0))-B(\sigma,\phi(\sigma))\|^{2}_{L_{2}(U,H)}\\
   & \qquad+2_{V_{1}^{*}}\langle A_{1}(\sigma,\overline{X}(\sigma,-n,0))-A_{1}(\sigma,\phi(\sigma)),
     \phi(\sigma)\rangle_{V_{1}}+2_{V_{1}^{*}}\langle A_{1}(\sigma,\phi(\sigma)),\overline{X}(\sigma,-n,0)\rangle_{V_{1}}\\
   & \qquad+2_{V_{2}^{*}}\langle A_{2}(\sigma,\overline{X}(\sigma,-n,0))-A_{2}(\sigma,\phi(\sigma)),\phi(\sigma)\rangle_{V_{2}}
     +2_{V_{2}^{*}}\langle A_{2}(\sigma,\phi(\sigma)),\overline{X}(\sigma,-n,0)\rangle_{V_{2}}\\
   & \qquad+2\langle B(\sigma,\overline{X}(\sigma,-n,0)),B(\sigma,\phi(\sigma))\rangle_{L_{2}(U,H)}
     -\|B(\sigma,\phi(\sigma))\|_{L_{2}(U,H)}^{2}\Big]\d\sigma.
\end{align*}

For every nonnegative $\psi\in L^{\infty}([a,b]\times\Omega,\d t\otimes P;\mathbb{R})$,
it follows from (H2$'$) that
\begin{align}\label{pin}
   & E\int^{b}_{a}\psi(t)\left(\|X(t,-n,0)\|_{H}^{2}-\|X(a,-n,0)\|^{2}_{H}\right)\d t  \\\nonumber
   & \leq E\Bigg(\int^{b}_{a}\psi(t)\int^{t}_{a}\Big(2_{V_{1}^{*}}\langle A_{1}(\sigma,\overline{X}(\sigma,-n,0))
     -A_{1}(\sigma,\phi(\sigma)),\phi(\sigma)\rangle_{V_{1}}\\\nonumber
   & \qquad +2_{V_{1}^{*}}\langle A_{1}(\sigma,\phi(\sigma)),\overline{X}(\sigma,-n,0)\rangle_{V_{1}}
     +2_{V_{2}^{*}}\langle A_{2}(\sigma,\overline{X}(\sigma,-n,0))-A_{2}(\sigma,\phi(\sigma)),
     \phi(\sigma)\rangle_{V_{2}}\\\nonumber
   & \qquad +2_{V_{2}^{*}}\langle A_{2}(\sigma,\phi(\sigma)),\overline{X}(\sigma,-n,0)\rangle_{V_{2}}
     +2\langle B(\sigma,\overline{X}(\sigma,-n,0)),B(\sigma,\phi(\sigma))\rangle_{L_{2}(U,H)}\\\nonumber
   & \qquad -\|B(\sigma,\phi(\sigma))\|^{2}_{L_{2}(U,H)}\Big)\d\sigma\d t\Bigg).
\end{align}
Using (1) we obtain
\begin{align*}
   & E\int_{a}^{b}\psi(t)\|X(t)\|_{H}^{2} \d t\\
   & =\lim\limits_{n\rightarrow\infty}E\int_{a}^{b}\langle\psi(t)X(t),X(t,-n,0)\rangle_{H}\d t\\
   & \leq \left(E\int_{a}^{b}\psi(t)\|X(t)\|_{H}^{2}\d t\right)^{\frac{1}{2}}
     \liminf_{n\rightarrow\infty}\left(E\int_{a}^{b}\psi(t)\|X(t,-n,0)\|_{H}^{2}\d t\right)^{\frac{1}{2}}.
\end{align*}
Then letting $n\rightarrow\infty$ in \eqref{pin}, we have
\begin{align}\label{Bountheq2}
   & E\int^{b}_{a}\psi(t)\left(\|X(t)\|^{2}_{H}-\|X(a)\|^{2}_{H}\right)\d t \\\nonumber
   & \leq E\Bigg(\int^{b}_{a}\psi(t)\int^{t}_{a}\Big(2_{V_{1}^{*}}\langle Y_{1}(\sigma)
     -A_{1}(\sigma,\phi(\sigma)),\phi(\sigma)\rangle_{V_{1}}+2_{V_{1}^{*}}\langle A_{1}(\sigma,\phi(\sigma)),
     \overline{X}(\sigma)\rangle_{V_{1}}\\\nonumber
   & \qquad +2_{V_{2}^{*}}\langle Y_{2}(\sigma)-A_{2}(\sigma,\phi(\sigma)),\phi(\sigma)\rangle_{V_{2}}
     +2_{V_{2}^{*}}\langle A_{2}(\sigma,\phi(\sigma)),\overline{X}(\sigma)\rangle_{V_{2}}\\\nonumber
   & \qquad +2\langle Z(\sigma),B(\sigma,\phi(\sigma))\rangle_{L_{2}(U,H)}
     -\|B(\sigma,\phi(\sigma))\|^{2}_{L_{2}(U,H)}\Big)\d\sigma\d t\Bigg).
\end{align}
And in view of the product rule, we get
\begin{align}\label{Bountheq3}
   & E\int^{b}_{a}\psi(t)\left(\|X(t)\|^{2}_{H}-\|X(a)\|^{2}_{H}\right)\d t \\\nonumber
   & =E\Bigg(\int^{b}_{a}\psi(t)\int^{t}_{a}\Big(2_{V_{1}^{*}}\langle Y_{1}(\sigma),\overline{X}(\sigma)\rangle_{V_{1}}
     +2_{V_{2}^{*}}\langle Y_{2}(\sigma),\overline{X}(\sigma)\rangle_{V_{2}}
     +\|Z(\sigma)\|^{2}_{L_{2}(U,H)}\Big)\d\sigma \d t\Bigg).
\end{align}
Therefore, \eqref{Bountheq2} and \eqref{Bountheq3} imply
\begin{equation}\label{ineq3}
  E\Bigg(\int^{b}_{a}\psi(t)\int^{t}_{a}\Big(2_{V^{*}}\langle Y(\sigma)-A(\sigma,\phi(\sigma)),
     \overline{X}(\sigma)-\phi(\sigma)\rangle_{V}
   +\|B(\sigma,\phi(\sigma))-Z(\sigma)\|_{L_{2}(U,H)}^{2}\Big)\d\sigma \d t\Bigg)\leq0.
\end{equation}
Taking $\phi=\overline{X}$ in \eqref{ineq3},  we have
$Z=B(\cdot,\overline{X})$, $\d t\otimes P$-a.e.
Then, applying \eqref{ineq3} to $\phi=\overline{X}-\epsilon\widetilde{\phi}v$ for $\epsilon>0$ and
$\widetilde{\phi}\in L^{\infty}([a,b]\times\Omega,\d t\otimes P;\mathbb{R})$, $v\in V$,
we have
\begin{equation*}
  E\Bigg(\int^{b}_{a}\psi(t)\int^{t}_{a}
   2_{V^{*}}\langle Y(\sigma)-A(\sigma,\overline{X}(\sigma)-\epsilon\widetilde{\phi}(\sigma)v),
   \epsilon\widetilde{\phi}(\sigma)v\rangle_{V}\d\sigma\d t\Bigg)\leq 0.
\end{equation*}
Dividing both sides by $\epsilon$ and letting $\epsilon\rightarrow0$,
according to Lebesgue's dominated convergence theorem, (H1) and (H4), we obtain
\begin{equation*}
  E\left(\int^{b}_{a}\psi(t)\int^{t}_{a}\widetilde{\phi}(\sigma) _{V^{*}}\langle Y(\sigma)
  -A(\sigma,\overline{X}(\sigma)),v\rangle_{V}\d\sigma\d t\right) \leq 0.
\end{equation*}
By the arbitrariness of $\psi$, $\widetilde{\phi}$ and $v$, we conclude that
$Y=A(\cdot,\overline{X})$, $\d t\otimes P$-a.e. This completes the existence proof, i.e.
$$X(t)=X(a)+\int^{t}_{a}A(\sigma,\overline{X}(\sigma))\d\sigma
+\int^{t}_{a}B(\sigma,\overline{X}(\sigma))\d W(\sigma),\quad \d t\otimes P\rm{\mbox{-}a.e.}$$
In view of the arbitrariness of interval $[a,b]\subset \mathbb{R}$, we conclude that $X(\cdot)$ is a
solution on $\mathbb{R}$. It follows from \eqref{ineq2} and \eqref{L2lim1} that $\sup\limits_{t\in\mathbb{R}}E\|X(t)\|_{H}^{2}<\infty$.

Now, we prove the uniqueness of $L^{2}$-bounded solution.
Suppose that $X(\cdot)$ and $Y(\cdot)$ are two $L^{2}$-bounded continuous solutions to
equation \eqref{eqSPDE1}, By (H2$'$) we have
\begin{align*}
   & E\|X(t)-Y(t)\|^{2}_{H} \\
   & =E\|X(t,-n,X(-n))-Y(t,-n,Y(-n))\|^{2}_{H} \\
   & \leq {\rm{e}}^{-\lambda(t+n)}E\|X(-n)-Y(-n)\|^{2}_{H}  \rightarrow 0, \quad {\rm{as}}~~~n\rightarrow\infty.
\end{align*}

The goal next is to prove that $\widehat{\mu}$ is unique with the properties (i) and (ii).
Note that
$$\sup_{t\in\mathbb{R}}\int_{H}\|x\|^{2}_{H}\widehat{\mu}(t)(\d x)=\sup_{t\in\mathbb{R}}E\|X(t)\|^{2}_{H}<\infty.$$
In view of the Chapman-Kolmogorov equation, we have
$$\mu(t,s,\L(X(s,-n,0)))=\L(X(t,-n,0)).$$
Then according to the Feller property (see, e.g. \cite[Proposition 4.2.10]{PR}), we get
$$\mu(t,s,\widehat{\mu}(s))=\widehat{\mu}(t).$$
Suppose that $\mu_{1}$ and $\mu_{2}$ satisfy properties (i) and (ii),
let $\zeta_{n,1}$ and $\zeta_{n,2}$ be random variables with the distributions $\mu_{1}(-n)$ and $\mu_{2}(-n)$ respectively.
Then consider the solutions $X(t,-n,\zeta_{n,1})$ and $X(t,-n,\zeta_{n,2})$ on $[-n,\infty)$,
we have
\begin{align*}
   & d_{BL}(\mu_{1}(t),\mu_{2}(t)) \\
   & =d_{BL}(\mu(t,-n,\mu_{1}(-n)),\mu(t,-n,\mu_{2}(-n))) \\
   & =\sup_{\|f\|_{BL}\leq1} \left|\int_{H}f(x)\d\left(\mu(t,-n,\mu_{1}(-n))-\mu(t,-n,\mu_{2}(-n))\right)\right| \\
   & =\sup_{\|f\|_{BL}\leq1} \left|\int_{\Omega}\left[f(X(t,-n,\zeta_{n,1}))-f(X(t,-n,\zeta_{n,2}))\right]\d P\right|\\
   & \leq \left(E\|X(t,-n,\zeta_{n,1})-X(t,-n,\zeta_{n,2})\|^{2}_{H}\right)^{1/2} \\
   & \leq {\rm{e}}^{-\frac{\lambda}{2}(t+n)}\left(E\|\zeta_{n,1}-\zeta_{n,2}\|^{2}_{H}\right)^{1/2}
     \rightarrow0, \quad {\rm{as}}~n\rightarrow\infty.
\end{align*}
Thus, $\mu_{1}(t)=\mu_{2}(t)$ for all $t\in\R$.
\end{proof}

\section{Recurrent solutions}

In this section, we show that the $L^{2}$-bounded solution
for equation \eqref{eqSPDE1} has the same character of recurrence as coefficients $A$ and $B$.

\subsection{Periodic solutions}

The following theorem shows that the $L^{2}$-bounded solution for equation \eqref{eqSPDE1}
is periodic in distribution provided the coefficients $A$ and $B$  are periodic.

\begin{theorem}\label{Pth}
Consider equation \eqref{eqSPDE1}.
Suppose that {\rm{(H1)}}, {\rm{(H2$'$)}}, {\rm{(H3)}},  {\rm{(H4)}} and {\rm{(HL)}} hold.
Assume further that the mappings $A$ and $B$ are $T$-periodic in $t$.
Then the unique $L^{2}$-bounded solution is $T$-periodic in distribution.

In particular, the unique $L^{2}$-bounded solution is stationary, provided $A$ and $B$ are independent of t.
\end{theorem}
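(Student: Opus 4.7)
The overall plan is to exploit the time-translation symmetry induced by $T$-periodicity of the coefficients and then invoke uniqueness in distribution of the $L^{2}$-bounded solution. First I would set $Y(t) := X(t+T)$ and introduce the shifted noise $\tilde W(t) := W(t+T) - W(T)$. The process $\tilde W$ is again a two-sided cylindrical Wiener process on $U$ with the same law as $W$, relative to the shifted filtration $\tilde{\mathcal F}_{t} := \mathcal F_{t+T}$. A direct change of variables in the deterministic and stochastic integrals, combined with $A(\cdot+T,\cdot)=A(\cdot,\cdot)$ and $B(\cdot+T,\cdot)=B(\cdot,\cdot)$, shows that
\[
Y(t) = Y(s) + \int_{s}^{t} A(\sigma, \overline{Y}(\sigma))\,\d\sigma + \int_{s}^{t} B(\sigma, \overline{Y}(\sigma))\,\d\tilde W(\sigma),\qquad s\le t,
\]
so that $Y$ is a continuous $L^{2}$-bounded $H$-valued solution of \eqref{eqSPDE1} driven by $\tilde W$ instead of $W$, with the same uniform $L^{2}$-bound as $X$.

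Second, I would transfer uniqueness-in-law from Theorem \ref{Boundedth}. Setting $X_{n}(t) := X(t,-n,0)$ and $Y_{n}(t) := X(t+T,-n+T,0)$, the process $Y_{n}$ satisfies the same SPDE driven by $\tilde W$ with zero initial datum at time $-n$. Because $W$ and $\tilde W$ have identical distributions and because (H1)--(H4),(HL) together with Theorem \ref{conlemma}(iii) yield continuous dependence of the law of the solution on the data, the laws $\mathcal L(X_{n})$ and $\mathcal L(Y_{n})$ coincide in $Pr(C([-n,\infty),H))$. Passing $n\to\infty$ with the help of \eqref{L2lim1} and the uniqueness component of Theorem \ref{Boundedth}, I obtain $\mathcal L(X(\cdot)) = \mathcal L(Y(\cdot)) = \mathcal L(X(\cdot + T))$ in $Pr(C(\R,H))$.

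Third, the equality $\mathcal L(X(\cdot + T)) = \mathcal L(X(\cdot))$ in $Pr(C(\R,H))$ immediately yields, for every $t\in\R$, that $\mathcal L(X(t + T + \cdot)) = \mathcal L(X(t + \cdot))$ in $Pr(C(\R,H))$, which is exactly the $T$-periodicity in distribution required by Definition \ref{apaalaw}. The stationary case is then a corollary: when $A$ and $B$ are independent of $t$ they are $T$-periodic for every $T\in\R$, so the preceding argument gives $T$-periodicity in distribution for every $T$, i.e.\ stationarity.

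The main obstacle I anticipate is the second step. Theorem \ref{Boundedth} is formulated pathwise with respect to the fixed Wiener process $W$, and transferring it to a setting driven by a different (but identically distributed) cylindrical Wiener process is really a uniqueness-in-law assertion for \eqref{eqSPDE1}. The cleanest route is to run the approximation scheme behind Theorem \ref{Boundedth} simultaneously for both driving noises and to appeal to Theorem \ref{conlemma}(iii), which tracks weak convergence of laws on the path space; this identifies the finite-dimensional, and then the full path-space, distributions of the two candidate solutions and closes the argument.
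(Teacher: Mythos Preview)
Your overall strategy---shift time by $T$, introduce $\tilde W(t)=W(t+T)-W(T)$, observe that the shifted process solves the same equation driven by $\tilde W$, and then pass the approximations $X(t,-n,0)$ and $X(t+T,-n+T,0)$ to the limit---is exactly the route taken in the paper. The difference lies in how the key equality of laws is justified.

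You argue that $\mathcal L(X_n)=\mathcal L(Y_n)$ follows from ``$W$ and $\tilde W$ have identical distributions'' together with Theorem~\ref{conlemma}(iii). But Theorem~\ref{conlemma} compares solutions of two Cauchy problems driven by the \emph{same} Wiener process $W$; it says nothing about changing the driving noise. What you actually need here is that pathwise uniqueness for \eqref{eqSPDE1} (which follows from (H2$'$)) implies uniqueness in law, so that the distribution of the solution depends only on the distribution of $(W,\zeta^s)$ and on the coefficients. This is precisely the Yamada--Watanabe theorem for stochastic evolution equations, and the paper invokes it explicitly (citing \cite{RSZ}) to obtain
\[
P\circ\bigl(X^{\tilde W}(t,s,x)\bigr)^{-1}=P\circ\bigl(X^{W}(t,s,x)\bigr)^{-1}.
\]
Your final paragraph correctly diagnoses this as the main obstacle, but the proposed cure---``run the approximation scheme simultaneously and appeal to Theorem~\ref{conlemma}(iii)''---does not close the gap, since every instance of Theorem~\ref{conlemma}(iii) still lives on a single probability space with a single $W$. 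Replace that appeal by Yamada--Watanabe and the argument goes through; the remainder of your proof (passing to the limit via \eqref{L2lim1} and reading off $T$-periodicity on $Pr(C(\mathbb R,H))$) matches the paper.
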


\begin{proof}
Define the transition probability function as follows
$$p_{t,s}(x,\d y):=P\circ(X(t,s,x))^{-1}(\d y),\quad s\leq t,~ x\in H.$$
We now check that $p_{t,s}(x,\d y)$ is $T$-periodic,
provided the coefficients of equation \eqref{eqSPDE1} are $T$-periodic. For any $t\geq s$
\begin{align*}
  & X(t+T,s+T,x)\\
  & =x+\int_{s+T}^{t+T}A(\sigma,\overline{X}(\sigma,s+T,x))\d\sigma
    +\int_{s+T}^{t+T}B(\sigma,\overline{X}(\sigma,s+T,x))\d W(\sigma) \\
  & =x+\int_{s}^{t}A(\sigma+T,\overline{X}(\sigma+T,s+T,x))\d\sigma
    +\int_{s}^{t}B(\sigma+T,\overline{X}(\sigma+T,s+T,x))\d W(\sigma+T) \\
  & =x+\int_{s}^{t}A(\sigma,\overline{X}(\sigma+T,s+T,x))\d\sigma
    +\int_{s}^{t}B(\sigma,\overline{X}(\sigma+T,s+T,x))\d\widetilde{ W}(\sigma),
\end{align*}
where $\widetilde{ W}(\cdot)=W(\cdot+T)-W(T)$.

In order to indicate the dependence of the solution $X(t,s,x)$, $t\in[s,\infty)$ of equation
\eqref{eqSPDE1} on the Wiener process,
we write $X^{W}(t,s,x)$ instead of $X(t,s,x)$. Similarly, we write $p_{t,s}^{W}(x,\d y)$
instead of $p_{t,s}(x,\d y)$. So by the uniqueness of the solutions to equation \eqref{eqSPDE1}, for any
$t\in[s,\infty)$, $X^{W}(t+T,s+T,x)=X^{\widetilde{ W}}(t,s,x)$, $P$-a.e.
In particular, we have
\begin{align*}
  p_{t+T,s+T}^{W}(x,\d y) & =P\circ(X^{W}(t+T,s+T,x))^{-1}(\d y) \\
   & =P\circ(X^{\widetilde{W}}(t,s,x))^{-1}(\d y)\\
   & =P\circ(X^{W}(t,s,x))^{-1}(\d y)=p_{t,s}^{W}(x,\d y),
\end{align*}
where the third equality follows from Yamada-Watanabe theorem (see \cite{RSZ}).

Now we prove that $\widehat{\mu}(t)$, $t\in\R$ is $T$-periodic in $Pr(H)$, recalling that $\widehat{\mu}(t)=\L(X(t))$, $t\in\R$ is the distribution
of the unique $L^{2}$-bounded solution $X(\cdot)$ on $H$.
For any $\phi\in C_{b}(H)$, we have
\begin{align*}
  \int_{H}\phi(x)\widehat{\mu}(t+T)(\d x) & = \int_{H}\phi(x)(P\circ(X(t+T))^{-1})(\d x)\\
   & =\lim\limits_{n\rightarrow\infty}\int_{H}\phi(x)(P\circ(X(t+T,-n+T,0))^{-1})(\d x)\\
   & =\lim\limits_{n\rightarrow\infty}\int_{H}\phi(x)(P\circ(X(t,-n,0))^{-1})(\d x)\\
   & =\int_{H}\phi(x)\widehat{\mu}(t)(\d x).
\end{align*}
The $T$-periodicity of the distribution of $X(\cdot)$ on $C(\R,H)$ now follows from the uniqueness in law of the solutions
for equation \eqref{eqSPDE1}.
The proof is complete.
\end{proof}

\subsection{Almost periodic solutions}

In the sequel, we show that the $L^{2}$-bounded solution of equation \eqref{eqSPDE1}
is almost periodic in distribution, if the coefficients $A$ and $B$ are uniformly almost periodic.
To this end, we need the tightness of the family of distributions
$\{P\circ[X(t)]^{-1}\}_{t\in\mathbb{R}}$.
Note that $\{P\circ[X(t)]^{-1}\}_{t\in\mathbb{R}}$ is tight provided ${\rm{dim}}H<\infty$.
But when ${\rm{dim}}H=\infty$, we need the following condition (H5) to get the tightness of
$\{P\circ[X(t)]^{-1}\}_{t\in\mathbb{R}}$. This condition was used in \cite{LiuW101} to study
the invariance of subspaces and in \cite{GLR} to study random attractors.

(H5) Assume that there exists a closed subset $S\subset H$ equipped with the norm $\|\cdot\|_{S}$
such that $V\subset S$ is continuous and $S\subset H$ is compact. Let $T_{n}$ be a sequence of
positive definite self-adjoint operators on $H$ such that for each $n\geq1$,
$$\langle x,y\rangle_{n}:=\langle x,T_{n}y\rangle_{H}, \quad x,y\in H,$$
defines a new inner product on $H$. Assume further that the norms $\|\cdot\|_{n}$ generated by
$\langle ~,~\rangle_{n}$ are all equivalent to $\|\cdot\|_{H}$ and for all $x\in S$ we have
$$\|x\|_{n} \uparrow \|x\|_{S} \quad {\rm{as}}~n\rightarrow\infty.$$
Furthermore, we suppose that for each $n\geq1$, $T_{n}: V\rightarrow V$ is continuous and
there exist constants $C>0$, $M_{0}>0$ such that for all $u,v\in V$, $t\in\mathbb{R}$
\begin{equation*}
  2_{V^{*}}\langle A(t,v),T_{n}v\rangle_{V}+\|B(t,v)\|^{2}_{L_{2}(U,H_{n})}\leq-C\|v\|_{n}^{2}+M_{0}.
\end{equation*}

Let $H_{n}:=(H,\langle~,~\rangle_{n}).$
We denote by $i_{n}$ the Riesz isomorphism from $H_{n}$ into $H_{n}^{*}$.
Similarly, $i:H\rightarrow H^{*}$.

\begin{lemma}[See Liu \cite{LiuW101}]\label{newgf}
If $T_{n}: V\rightarrow V$ is continuous, then $i_{n}\circ i^{-1} : H^{*}\rightarrow H_{n}^{*}$
is continuous with respect to $\|\cdot\|_{V^{*}}$.
Therefore, there exists a unique extension $I_{n}$ of $ i_{n}\circ i^{-1}$ on $V^{*}$ such that
for all $f\in V^{*}$, $v\in V$
\begin{equation}\label{11}
  ~_{V^{*}}\langle I_{n}f,v\rangle_{V}=~_{V^{*}}\langle f,T_{n}v\rangle_{V}.
\end{equation}
\end{lemma}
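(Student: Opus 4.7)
The plan is to verify the identity $_{V^{*}}\langle I_{n}f, v\rangle_{V} = {}_{V^{*}}\langle f, T_{n}v\rangle_{V}$ first on the subspace $H^{*}$ by direct computation, to deduce the $V^{*}$-continuity of $i_{n}\circ i^{-1}$ from it, and then to invoke density of $H^{*}$ in $V^{*}$ to extend uniquely.

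For the first step I would unwind the definitions. For $f\in H^{*}$, set $h := i^{-1}(f)\in H$, so that $f(v) = \langle v, h\rangle_{H}$ for all $v\in V$. By definition of $i_{n}$ and of $\langle\cdot,\cdot\rangle_{n}$,
\[
(i_{n}\circ i^{-1})(f)(v) \;=\; \langle v, h\rangle_{n} \;=\; \langle v, T_{n}h\rangle_{H},
\]
and now self-adjointness of $T_{n}$ on $H$ moves $T_{n}$ to the other argument:
\[
\langle v, T_{n}h\rangle_{H} \;=\; \langle T_{n}v, h\rangle_{H} \;=\; f(T_{n}v) \;=\; {}_{V^{*}}\langle f, T_{n}v\rangle_{V},
\]
the last equality using that the $V^{*}$-$V$ pairing extends $\langle\cdot,\cdot\rangle_{H}$ on $H\times V$ and that $T_{n}v\in V$ by hypothesis. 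This gives \eqref{11} on $H^{*}$.

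From this identity and the continuity of $T_{n}:V\to V$ (with some constant $C_{n}$), the operator-norm estimate $\|i_{n}\circ i^{-1}(f)\|_{V^{*}}\le C_{n}\|f\|_{V^{*}}$ is immediate by taking a supremum over $\|v\|_{V}\le 1$. Since $V\hookrightarrow H$ is continuous and dense with $V$ reflexive, a standard transpose argument yields that $H^{*}$ is dense in $V^{*}$, so $i_{n}\circ i^{-1}$ admits a unique bounded linear extension $I_{n}:V^{*}\to V^{*}$. For a general $f\in V^{*}$, approximating by $f_{k}\in H^{*}$ with $f_{k}\to f$ in $V^{*}$ and passing to the limit on both sides (each $V^{*}$-continuous in $f$ for fixed $v$) establishes \eqref{11} in full generality.

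The only real obstacle is bookkeeping between the two identifications of $H$ with a dual space: $i:H\to H^{*}$ and $i_{n}:H\to H_{n}^{*}$ embed $H$ into $V^{*}$ differently, and $i_{n}\circ i^{-1}$ is precisely this ``change of identification''. Once self-adjointness of $T_{n}$ on $H$ is invoked to intertwine the two inner products, the identity is essentially automatic and everything else reduces to a routine density/continuity argument.
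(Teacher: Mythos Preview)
Your argument is correct. The paper does not actually prove this lemma; it merely states it with a citation to Liu \cite{LiuW101}, so there is no in-paper proof to compare against. Your approach---first establishing \eqref{11} on $H^{*}$ by unwinding the Riesz maps and using self-adjointness of $T_{n}$, then reading off the $V^{*}$-operator bound, and finally extending by density of $H^{*}$ in $V^{*}$ (which indeed uses reflexivity of $V$ via Hahn--Banach)---is exactly the standard route and matches what one finds in the cited reference.
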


\begin{prop}\label{tightprop}
Consider equation \eqref{eqSPDE1}.
Suppose that {\rm{(H1)}}, {\rm{(H2$'$)}}, {\rm{(H3)}},  {\rm{(H4)}} and {\rm{(H5)}} hold,
and that $\zeta_{s}\in L^{2}(\Omega,\mathcal{F}_{s},P;H)$. Let  $X(t,s,\zeta_{s})$, $t\geq s$
be the solution to equation \eqref{eqSPDE1} with initial condition $X(s)=\zeta_s$. Then the $L^{2}$-bounded solution $X(\cdot)$ satisfies
$$\sup_{t\in\mathbb{R}}E\|X(t)\|_{S}^{2}<\infty.$$

In particular, the family of distributions $\{{P\circ[X(t)]^{-1}}\}_{t\in\mathbb{R}}$ is tight.
\end{prop}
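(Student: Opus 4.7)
The plan is to bound $E\|X(t,s,\zeta_s)\|_n^2$ uniformly in $t$ and $n$ using the coercivity in (H5), and then pass to the limits $k\to\infty$ (with $\zeta_{-k}=0$) and $n\to\infty$ to obtain the $S$-norm estimate on the $L^2$-bounded solution $X(\cdot)$, from which tightness follows via the compact embedding $S\hookrightarrow H$.

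First I would apply It\^o's formula in the Hilbert space $H_n:=(H,\langle\cdot,\cdot\rangle_n)$. Since $\|\cdot\|_n$ is equivalent to $\|\cdot\|_H$, the triple $(V,H_n,V^*)$ is again a Gelfand triple, and by Lemma \ref{newgf} its duality pairing reads $_{V^*}\langle f,v\rangle_{V,n}={}_{V^*}\langle f,T_n v\rangle_V$ for $f\in V^*$, $v\in V$. It\^o's formula \cite[Theorem 4.2.5]{PR} applied to $\|X(t,s,\zeta_s)\|_n^2$ then yields
\begin{align*}
\|X(t,s,\zeta_s)\|_n^2 &= \|\zeta_s\|_n^2 + \int_s^t \Big(2{}_{V^*}\langle A(\sigma,\overline X(\sigma)),T_n\overline X(\sigma)\rangle_V + \|B(\sigma,\overline X(\sigma))\|_{L_2(U,H_n)}^2\Big)\d\sigma\\
&\quad + 2\int_s^t \langle X(\sigma),B(\sigma,\overline X(\sigma))\d W(\sigma)\rangle_n .
\end{align*}
After localizing the stochastic integral, taking expectations, and invoking (H5), I expect the differential-inequality bound
$$E\|X(t,s,\zeta_s)\|_n^2 \leq E\|\zeta_s\|_n^2 - C\int_s^t E\|X(\sigma,s,\zeta_s)\|_n^2\,\d\sigma + M_0(t-s),$$
from which Gronwall's lemma (equivalently, the integrating factor $e^{C(t-s)}$) gives
$$E\|X(t,s,\zeta_s)\|_n^2 \leq e^{-C(t-s)}E\|\zeta_s\|_n^2 + \frac{M_0}{C}.$$

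Specializing to $s=-k$ and $\zeta_s=0$ yields $E\|X(t,-k,0)\|_n^2\leq M_0/C$ uniformly in $t\geq -k$, $k\geq 1$, and $n\geq 1$. By \eqref{L2lim1}, $X(t,-k,0)\to X(t)$ in $L^2(\Omega,P;H)$; since $\|\cdot\|_n$ is equivalent to $\|\cdot\|_H$ the same convergence holds in $L^2(\Omega,P;H_n)$, so $E\|X(t)\|_n^2\leq M_0/C$ for every $t$ and $n$. Letting $n\to\infty$, monotone convergence (with $\|x\|_n^2=\langle x,T_n x\rangle_H$ increasing to $\|x\|_S^2$ on $S$, and to $+\infty$ off $S$) delivers
$$\sup_{t\in\R} E\|X(t)\|_S^2 \leq \frac{M_0}{C}<\infty,$$
which in particular forces $X(t)\in S$ almost surely for every $t$. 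Tightness follows immediately: the compact embedding $S\hookrightarrow H$ makes each ball $K_R:=\{x\in S:\|x\|_S\leq R\}$ precompact in $H$, and Markov's inequality gives $P(\|X(t)\|_S>R)\leq M_0/(CR^2)$ uniformly in $t$, so $\{P\circ[X(t)]^{-1}\}_{t\in\R}$ is tight in $Pr(H)$.

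The main delicate point in executing this plan is the legitimacy of It\^o's formula in the modified triple $(V,H_n,V^*)$ with its altered duality pairing; Lemma \ref{newgf} is tailored precisely for this identification, so once the new pairing is read off correctly the integrability requirements of \cite[Theorem 4.2.5]{PR} follow from (H3), (H4), and the norm equivalence. A secondary subtlety is the monotonicity of $n\mapsto\|x\|_n^2$ for general $x\in H$ needed in the final step; this is implicit in the hypothesis $\|x\|_n\uparrow\|x\|_S$ on $S$ (reflecting $T_n\uparrow T$ in the quadratic-form sense), and once noted the monotone-convergence argument closes the proof.
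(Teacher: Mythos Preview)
Your proposal is correct and follows essentially the same route as the paper: It\^o's formula in the modified triple $(V,H_n,V^*)$ via Lemma~\ref{newgf}, the exponential decay estimate from (H5), specialization to initial data $0$ at $s=-k$, and then letting $k\to\infty$ followed by $n\to\infty$. The only cosmetic difference is in the $k\to\infty$ step: the paper extracts a weakly convergent subsequence in $L^2(\Omega;H_n)$ and uses lower semicontinuity of the norm (Fatou), whereas you use the strong $L^2(\Omega;H)$ convergence from \eqref{L2lim1} together with the equivalence of $\|\cdot\|_n$ and $\|\cdot\|_H$ to conclude strong convergence in $L^2(\Omega;H_n)$ and hence convergence of second moments---your argument is in fact slightly cleaner here.
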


\begin{proof}
Note that $X(t,s,\zeta_{s})$, $t\geq s$ satisfies
\begin{equation}\label{newgfeq}
  iX(t,s,\zeta_{s})=i\zeta_{s}+\int^{t}_{s}A(\sigma,\overline{X}(\sigma,s,\zeta_{s}))\d\sigma
+i\left(\int_{s}^{t}B(\sigma,\overline{X}(\sigma,s,\zeta_{s}))\d W(\sigma)\right), \quad t\geq s.
\end{equation}
According to Lemma \ref{newgf}, applying $I_{n}$ to \eqref{newgfeq} we obtain
\[
i_{n}X(t,s,\zeta_{s})=i_{n}\zeta_{s}+\int^{t}_{s}I_{n}A(\sigma,\overline{X}(\sigma,s,\zeta_{s}))\d\sigma
+i_{n}\left(\int_{s}^{t}B(\sigma,\overline{X}(\sigma,s,\zeta_{s}))\d W(\sigma)\right).
\]
Then using It\^o's formula on the new Gelfand triple
\[
V\subset H_{n}\simeq H_{n}^{*} \subset V^{*},
\]
we get
\begin{align*}
    & \|X(t,s,\zeta_{s})\|_{n}^{2} \\
    & =\|\zeta_{s}\|_{n}^{2}+\int^{t}_{s}\Big(2_{V^{*}}\langle I_{n}A(\sigma,\overline{X}(\sigma,s,\zeta_{s})),
    \overline{X}(\sigma,s,\zeta_{s})\rangle_{V}
    +\|B(\sigma,\overline{X}(\sigma,s,\zeta_{s}))\|_{L_{2}(U,H_{n})}^{2}\Big) \d\sigma\\
    &\quad  +2\int^{t}_{s}\langle X(\sigma,s,\zeta_{s}),B(\sigma,\overline{X}(\sigma,s,\zeta_{s}))\d W(\sigma)\rangle_{n}.
  \end{align*}
See \cite{LiuW101} for details. By the product rule, \eqref{11} and (H5) we have
\begin{align*}
   & E\left({\rm{e}}^{C(t-s)}\|X(t,s,\zeta_{s})\|^{2}_{n}\right) \\
   & =E\|\zeta_{s}\|_{n}^{2}+E\int^{t}_{s}{\rm{e}}^{C(\sigma-s)}\Big(2_{V^{*}}\langle
     I_{n}A(\sigma,\overline{X}(\sigma,s,\zeta_{s})),\overline{X}(\sigma,s,\zeta_{s})\rangle_{V}\\
   & \qquad +\|B(\sigma,\overline{X}(\sigma,s,\zeta_{s}))\|^{2}_{L_{2}(U,H_{n})}\Big)\d\sigma
     +E\int_{s}^{t}C{\rm{e}}^{C(\sigma-s)}\|X(\sigma,s,\zeta_{s})\|^{2}_{n}\d\sigma\\
   & \leq E\|\zeta_{s}\|_{n}^{2}+\int_{s}^{t}{\rm{e}}^{C(\sigma-s)}M_{0}\d\sigma\\
   & \leq E\|\zeta_{s}\|_{n}^{2}+\frac{M_{0}}{C}{\rm{e}}^{C(t-s)}.
\end{align*}
Therefore, we obtain
$$E\|X(t,s,\zeta_{s})\|^{2}_{n}\leq {\rm{e}}^{-C(t-s)}E\|\zeta_{s}\|_{n}^{2}+\frac{M_{0}}{C}.$$
In particular, we have
$$E\|X(t,-m,0)\|^{2}_{n}\leq \frac{M_{0}}{C}.$$
Thus we may assume, going if necessary to a subsequence, that
\[
X(t,-m,0)\rightarrow X(t)\quad {\rm {weakly ~ in}} ~ L^{2}(\Omega,P;H_{n}).
\]
Then Fatou's lemma yields that
$$E\|X(t)\|^{2}_{n}\leq\liminf_{m\rightarrow\infty}E\|X(t,-m,0)\|^{2}_{n}\leq\frac{M_{0}}{C}.$$
Moreover, we have
\begin{equation*}
  E\|X(t)\|^{2}_{S}= E\lim_{n\rightarrow\infty}\|X(t)\|^{2}_{n}
  \leq\liminf_{n\rightarrow\infty}E\|X(t)\|^{2}_{n}\leq\frac{M_{0}}{C}.
\end{equation*}

The tightness of $\{{P\circ[X(t)]^{-1}}\}_{t\in\mathbb{R}}$ is an easy consequence of the
compactness of the embedding $S\subset H$.
\end{proof}

\begin{remark}\rm
Note that Gess \cite{Gess} gave a different technique to obtain the compactness of random dynamical
systems generated by stochastic singular evolution equations. That is, he got the
compactness of the solution mapping for any fixed sample point. Since the recurrence
we are concerned with in this paper is in distribution sense instead of in pathwise sense,
it seems that the technique in \cite{Gess} is not applicable to our problem.
\end{remark}

\begin{prop}\label{CBprop}
Consider equation \eqref{eqSPDE1}.
Suppose that $A$, $B$, $A_{n}$, $B_{n}$ satisfy {\rm{(H1)}}, {\rm{(H2$'$)}}, {\rm{(H3)}},  {\rm{(H4)}}
and {\rm{(HL)}} with the same constants $\lambda$, $c_{1}$, $c_{2}$, $c_{3}$, $c_{2}'$, $c_{3}'$, $M_{0}$,
$\alpha_{i}$, $i=1,2$ and $L_{B}$. Let $X(\cdot)$, $X_{n}(\cdot)$ be the $L^{2}$-bounded solutions
of equation \eqref{eqSPDE1} corresponding to $A$, $B$ and $A_{n}$, $B_{n}$ respectively.
Assume in addition that
\begin{enumerate}
  \item $\lim\limits_{n\rightarrow\infty}\|A_{i,n}(t,x)-A_{i}(t,x)\|_{V_{i}^{*}}=0$
        for all $x\in V$, $t\in \mathbb{R}$, $i=1,2$;
  \item $\lim\limits_{n\rightarrow\infty}\|B_{n}(t,x)-B(t,x)\|_{L_{2}(U,H)}=0$
        for all $x\in V$, $t\in \mathbb{R}$;
  \item for each $t\in \mathbb{R}$ the family of distributions
        $\{P\circ[X_{n}(t)]^{-1}\}_{n\in\mathbb{N}}$ is tight.
\end{enumerate}
Then
$$\lim_{n\rightarrow\infty}d_{BL}(\mathcal{L}(X_{n}),\mathcal{L}(X))=0
\quad {\rm{in}} ~ Pr(C(\mathbb{R},H)).$$

In particular,
$$\lim_{n\rightarrow\infty}d_{BL}(\mathcal{L}(X_{n}(t+\cdot)),\mathcal{L}(X(t+\cdot)))=0
\quad {\rm{in}}~ Pr(C(\mathbb{R},H))~{\rm{for~all}}~t\in\mathbb{R}.$$
\end{prop}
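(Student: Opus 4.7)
By Remark \ref{Remc}, it is enough to prove the pointwise convergence
\[
\lim_{n\to\infty} d_{BL}\bigl(\mathcal{L}(X_n(t_0)),\mathcal{L}(X(t_0))\bigr)=0 \quad\text{in } Pr(H) \text{ for every fixed } t_0\in\mathbb{R},
\]
because Theorem \ref{conlemma}(iii) then upgrades this automatically to convergence of the laws on $Pr(C(\mathbb{R},H))$, and the ``in particular'' part follows by time translation. So I would fix $t_0\in\mathbb{R}$ and argue by contradiction: if the conclusion fails, there exist $\varepsilon_0>0$ and a subsequence (still denoted $X_n$) with $d_{BL}(\mathcal{L}(X_n(t_0)),\mathcal{L}(X(t_0)))\ge \varepsilon_0$ for all $n$.

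The plan is to control $X_n$ on $[-m,\infty)$ for large $m$ by starting from its distribution at time $-m$. By hypothesis (3) the family $\{\mathcal{L}(X_n(-m))\}_n$ is tight for each $m$, so by Prokhorov's theorem and a standard diagonal extraction I can find a further subsequence $\{n_k\}$ (independent of $m$) such that $\mathcal{L}(X_{n_k}(-m))\to \nu_m$ weakly in $Pr(H)$ for every $m\in\mathbb{N}$. Realize $\nu_m$ as the law of some $\mathcal{F}_{-m}$-measurable $\eta_m$ on an extended probability space and let $Y^{(m)}(t)=X(t,-m,\eta_m)$ denote the solution of the limiting equation \eqref{eqSPDE1} starting from $\eta_m$ at time $-m$. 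Applying Theorem \ref{conlemma}(iii) to the equations for $A_{n_k},B_{n_k}$ started at time $-m$ from initial data with laws $\mathcal{L}(X_{n_k}(-m))$, whose $d_{BL}$ distance to $\nu_m$ tends to zero, I obtain
\[
\lim_{k\to\infty} d_{BL}\bigl(\mathcal{L}(X_{n_k}),\mathcal{L}(Y^{(m)})\bigr)=0 \quad\text{in } Pr(C([-m,\infty),H)),
\]
and in particular $\mathcal{L}(X_{n_k}(t_0))\to \mathcal{L}(Y^{(m)}(t_0))$ in $d_{BL}$ for each $m$ with $-m\le t_0$. Since the left-hand side does not depend on $m$, the measures $\mathcal{L}(Y^{(m)}(t_0))$ must all coincide with a common limit $\rho$.

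It remains to identify $\rho=\mathcal{L}(X(t_0))$, and this is where the strict monotonicity (H2$'$) enters as the main analytic step. Using It\^o's formula exactly as in the proof of Theorem \ref{Boundedth} (applied to the difference of the two solutions $Y^{(m)}$ and the unique $L^2$-bounded solution $X$ for the limiting equation) I obtain
\[
E\|Y^{(m)}(t_0)-X(t_0)\|_H^2 \le e^{-\lambda(t_0+m)}\,E\|\eta_m-X(-m)\|_H^2 \le 4M_1\,e^{-\lambda(t_0+m)},
\]
where the last bound uses $\int_H\|x\|_H^2\,\nu_m(dx)\le \liminf_k E\|X_{n_k}(-m)\|_H^2\le M_1$ (Fatou together with the uniform bound \eqref{ineq2} applied to $X_{n_k}$, which is valid because all coefficients share the constants of (H3)), and the analogous bound $\sup_{t\in\mathbb{R}}E\|X(t)\|_H^2\le M_1$. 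Letting $m\to\infty$ gives $\mathcal{L}(Y^{(m)}(t_0))\to \mathcal{L}(X(t_0))$ in $d_{BL}$, hence $\rho=\mathcal{L}(X(t_0))$, contradicting the assumption $d_{BL}(\mathcal{L}(X_{n_k}(t_0)),\mathcal{L}(X(t_0)))\ge \varepsilon_0$.

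The main obstacle I anticipate is a bookkeeping one: matching the initial-condition hypothesis of Theorem \ref{conlemma}(iii) (convergence of $\zeta_n^s$ in $d_{BL}$ on $Pr(H)$) with the sequence of genuine random variables $X_{n_k}(-m)$ built on the original probability space. Since Theorem \ref{conlemma}(iii) is stated at the level of laws, the coupling issue is harmless here, but some care is required to note that the constants in (H3), (H4), (HL) are shared by $(A_n,B_n)$ and $(A,B)$, so that the uniform $L^2$-bound $M_1$ and the contraction rate $\lambda$ from (H2$'$) used in the final step apply to every $X_n$ as well as to $X$.
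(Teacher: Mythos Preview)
Your argument is correct and follows the same overall strategy as the paper: reduce to pointwise convergence via Remark~\ref{Remc}, pass to subsequences using the tightness assumption at negative integer times, apply Theorem~\ref{conlemma}(iii) on $[-m,\infty)$, and finally identify the limit with $\mathcal{L}(X(t_0))$. The one place where you diverge slightly from the paper is the identification step: the paper shows that the limit $\nu(t):=\mathcal{L}(Y_r(t))$ satisfies the flow property and the $L^2$-bound, and then invokes the uniqueness part of Theorem~\ref{Boundedth} to conclude $\nu=\widehat{\mu}$; you instead couple $Y^{(m)}$ with $X$ on an extended space and apply the (H2$'$) contraction directly to get $E\|Y^{(m)}(t_0)-X(t_0)\|_H^2\le 4M_1\,e^{-\lambda(t_0+m)}\to 0$. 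Since the uniqueness in Theorem~\ref{Boundedth} is itself proved by exactly this contraction estimate, the two routes are essentially the same, and your version is arguably a bit more transparent.
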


\begin{proof}
According to Remark \ref{Remc}, we only need to prove that
$\lim\limits_{n\rightarrow\infty}d_{BL}(\mathcal{L}(X_{n}(t)),\mathcal{L}(X(t)))=0$
in $Pr(H)$ for every $t\in\R$. To this end, it suffices to show that for every sequence
$\gamma'=\{\gamma_{k}'\}:=\{\gamma_{k}'\}_{k=1}^{\infty}\subset\N$,
there exists a subsequence $\gamma=\{\gamma_{k}\}$ of $\gamma'$ such that for every $t\in\R$
$\lim\limits_{k\rightarrow\infty}d_{BL}(\mathcal{L}(X_{\gamma_{k}}(t)),\mathcal{L}(X(t)))=0$ in $Pr(H)$.

For every $r\geq1$, according to the tightness of  $\{\L(X_{\gamma_{k}'}(-r))\}$, there exists
a subsequence $\{\gamma_{k}\}\subset\gamma'$ such that
$\L(X_{\gamma_{k}}(-r))$ converges weakly to some  probability measure $\mu_{r}$ in $Pr(H)$.
Let $\xi_{r}$ be a random variable with distribution $\mu_{r}$.
Define $Y_{r}(t):=X(t,-r,\xi_{r})$, recalling that
$X(t,-r,\xi_{r})$, $t\in[-r,+\infty)$ is the solution to the following Cauchy problem
\begin{equation*}
  \left\{
   \begin{aligned}
   &\ \d X(t)=A(t,X(t))\d t+B(t,X(t))\d W(t)\\
  &\ X(-r)=\xi_{r}.
   \end{aligned}
   \right.
  \end{equation*}
In view of Theorem \ref{conlemma}, we have
$$\lim\limits_{k\rightarrow\infty}d_{BL}(\L(X_{\gamma_{k}}),\L(Y_{r}))=0
\quad{\rm {in}}~ Pr(C([-r,+\infty),H)).$$
Since $\{\L(X_{\gamma_{k}}(-r-1))\}$ is tight, going if necessary to a subsequence,
we can assume that $\L(X_{\gamma_{k}}(-r-1))$ converges weakly to some  probability
measure $\mu_{r+1}$ in $Pr(H)$.
Let $\xi_{r+1}$ be a random variable with distribution
$\mu_{r+1}$. In light of Theorem \ref{conlemma}, we have
 $$\lim\limits_{k\rightarrow\infty}d_{BL}(\L(X_{\gamma_{k}}),\L(Y_{r+1}))=0
 \quad{\rm {in}} ~ Pr(C([-r-1,+\infty),H)),$$
where $Y_{r+1}(t):=X(t,-r-1,\xi_{r+1})$, $t\in[-r-1,+\infty)$.
Therefore, we have $d_{BL}(\L(Y_{r}),\L(Y_{r+1}))=0$ in $Pr(C([-r,+\infty),H))$. In particular,
$\L(Y_{r}(t))=\L(Y_{r+1}(t))$, for all $t\geq-r$.

Define $\nu(t):=\L(Y_{r}(t))$,  $t\geq-r$. We use a standard diagonal argument to
extract a subsequence which we still denote by $\{X_{\gamma_{k}}\}$ satisfying
 $$\lim\limits_{k\rightarrow\infty}d_{BL}(\L(X_{\gamma_{k}}(t)),\nu(t))=0 \quad{\rm in} ~Pr(H)$$
for every $t\in\R$.
Note that $\sup\limits_{t\in\R}\int_{H}\|x\|_{H}^{2}\nu(t)(\d x)<+\infty$. And we have $P$-a.e.
\begin{equation*}
  Y_{r}(t)=Y_{r}(s)+\int_{s}^{t}A(\sigma,\overline{Y}_{r}(\sigma))\d\sigma
  +\int_{s}^{t}B(\sigma,\overline{Y}_{r}(\sigma))\d W(\sigma), \quad {\rm{where}}~ t\geq s\geq-r.
\end{equation*}
By the uniqueness in law of the solutions for equation \eqref{eqSPDE1}, we have
$\L(Y_{r}(t))=\mu(t,s,\L(Y_{r}(s))$, $t\geq s\geq-r$,
i.e. $\nu(t)=\mu(t,s,\nu(s))$, $t\geq s$. In view of Theorem \ref{Boundedth},
we obtain $\nu=\widehat{\mu}$. Therefore, for every $t\in\R$, we have
\begin{equation*}
  \lim\limits_{k\rightarrow\infty}d_{BL}(\L(X_{\gamma_{k}}(t)),\L(X(t)))=0 \quad {\rm{in}}~ Pr(H).
\end{equation*}
\end{proof}

The following result shows that the $L^{2}$-bounded solution is almost periodic in distribution
provided $A$ and $B$ are uniformly almost  periodic.

\begin{theorem}\label{APth}
Consider equation \eqref{eqSPDE1}.
Suppose that {\rm{(H1)}}, {\rm{(H2$'$)}}, {\rm{(H3)}},  {\rm{(H4)}}, {\rm{(H5)}} and {\rm{(HL)}} hold.
Assume further that the mappings $A_{1}$, $A_{2}$ and $B$ are uniformly almost periodic.
Then the unique $L^{2}$-bounded solution is almost periodic in distribution.
\end{theorem}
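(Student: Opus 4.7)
The plan is to invoke Bochner's double-sequence characterization of almost periodicity, namely condition~(iv) of Theorem~\ref{BochBohr}, applied to the map $\mu_X:\R\to Pr(C(\R,H))$ defined by $\mu_X(t):=\mathcal{L}(X(t+\cdot))$. This is the natural criterion here because Proposition~\ref{CBprop} only delivers pointwise-in-$t$ convergence of laws in $Pr(C(\R,H))$. Given any two sequences $\gamma',\beta'\subset\R$, I must extract common-indexed subsequences $\gamma\subset\gamma'$, $\beta\subset\beta'$ such that $T_{\gamma+\beta}\mu_X(t)=T_\gamma T_\beta\mu_X(t)$ for every $t\in\R$; this then yields almost periodicity of $\mu_X$ in $Pr(C(\R,H))$, which by Definition~\ref{apaalaw} is exactly almost periodicity in distribution of $X$.

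Using the Bochner-type double-sequence criterion for uniformly almost periodic functions (the equivalence theorem stated in Section~2) applied to $A_1,A_2,B$, together with a standard diagonal argument over a countable dense subset of $V$, I extract common-indexed subsequences $\gamma=\{\gamma_n\}$, $\beta=\{\beta_n\}$ such that, for each $x\in V$ and $i=1,2$, the limits
\begin{align*}
\bar A_i(t,x)&:=\lim_n A_i(t+\beta_n,x), & \bar B(t,x)&:=\lim_n B(t+\beta_n,x),\\
\hat A_i(t,x)&:=\lim_n A_i(t+\gamma_n+\beta_n,x), & \hat B(t,x)&:=\lim_n B(t+\gamma_n+\beta_n,x)
\end{align*}
all exist in the respective norms uniformly in $t\in\R$, and satisfy the compatibility relations $\hat A_i(t,x)=\lim_n\bar A_i(t+\gamma_n,x)$ and $\hat B(t,x)=\lim_n\bar B(t+\gamma_n,x)$. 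The conditions (H1)--(H5) and (HL) are all preserved under pointwise limits with the same constants, so both $(\bar A,\bar B)$ and $(\hat A,\hat B)$ satisfy the full hypothesis set; Theorem~\ref{Boundedth} then furnishes unique $L^2$-bounded continuous solutions $\bar X,\hat X$ of the two limit equations.

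Now fix $t\in\R$. As in the proof of Theorem~\ref{Pth}, the shifted process $Y_n(s):=X(s+t+\beta_n)$ is the unique $L^2$-bounded solution of the SDE with coefficients $A(\cdot+t+\beta_n,\cdot)$, $B(\cdot+t+\beta_n,\cdot)$ driven by the Wiener process $\widetilde W_n(s):=W(s+t+\beta_n)-W(t+\beta_n)$, which has the same law as $W$. Proposition~\ref{tightprop} supplies the tightness needed to invoke Proposition~\ref{CBprop}, which then gives
\[
T_\beta\mu_X(t)=\lim_n\mathcal{L}(Y_n)=\mathcal{L}(\bar X(t+\cdot))\quad\text{in }Pr(C(\R,H)).
\]
The same argument, with $\gamma_n+\beta_n$ replacing $\beta_n$, delivers $T_{\gamma+\beta}\mu_X(t)=\mathcal{L}(\hat X(t+\cdot))$; applied to $\bar X$ with the shifts $\gamma_n$ (using the compatibility relations above to identify the limit coefficients as $\hat A,\hat B$), it yields $T_\gamma(T_\beta\mu_X)(t)=\lim_n\mathcal{L}(\bar X(t+\gamma_n+\cdot))=\mathcal{L}(\hat X(t+\cdot))$. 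The two expressions coincide, completing the verification.

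The principal technical obstacle is the simultaneous subsequence extraction in the second paragraph: the four limit objects must be produced along common indices and satisfy the compatibility identifications under composition. This is exactly what the Bochner-type double-sequence criterion for uniformly almost periodic functions provides, but it leans on the uniformity in $t$ coming from uniform almost periodicity of the coefficients, even though Proposition~\ref{CBprop} itself only requires pointwise convergence. The remaining verifications---the inheritance of (H1)--(H5) and (HL) by the limit coefficients, and the identification of $Y_n$ as the variational $L^2$-bounded solution of the shifted SDE---are routine limit passages.
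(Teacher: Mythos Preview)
Your proposal is correct and follows essentially the same route as the paper's proof: both apply the pointwise Bochner double-sequence criterion (Theorem~\ref{BochBohr}(iv)) to $t\mapsto\mathcal{L}(X(t+\cdot))$, extract common-indexed subsequences along which the shifted coefficients converge with the compatibility $T_\gamma T_\beta=T_{\gamma+\beta}$, check that the limit coefficients inherit all hypotheses, and then use Proposition~\ref{CBprop} (with tightness supplied by Proposition~\ref{tightprop}) together with uniqueness of the $L^2$-bounded solution to transfer these relations to the laws. The only cosmetic difference is that you spell out the Wiener-shift and uniqueness-in-law step that the paper leaves implicit by citing Proposition~\ref{CBprop} together with ``uniqueness in law of the solutions''.
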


\begin{proof}
Let $\gamma'=\{\gamma_{n}'\}$ and $\beta'=\{\beta_{n}'\}$ be two sequences in $\mathbb{R}$.
By Definition \ref{apaalaw} and Theorem \ref{BochBohr}, it suffices to show that there exist two subsequences
$\gamma=\{\gamma_{n}\}\subset\gamma'=\{\gamma_{n}'\}$ and $\beta=\{\beta_{n}\}\subset\beta'=\{\beta_{n}'\}$
with the same indexes such that for every  $t\in \mathbb{R}$
\begin{equation*}
  \lim_{n\rightarrow\infty}\lim_{m\rightarrow\infty}\widehat{\mu}(t+\gamma_{n}+\beta_{m}),
  \quad \lim_{n\rightarrow\infty}\widehat{\mu}(t+\gamma_{n}+\beta_{n})
\end{equation*}
exist and equal.

In fact, since $A_{1}$, $A_{2}$ and $B$ are uniformly almost periodic,
there exist $\gamma=\{\gamma_{n}\}\subset\gamma'$ and $\beta=\{\beta_{n}\}\subset\beta'$
with the same indexes such that $T_{\beta}A_{i}$, $T_{\beta}B$, $T_{\gamma}T_{\beta}A_{i}$,
$T_{\gamma}T_{\beta}B$, $T_{\gamma+\beta}A_{i}$, $i=1,2$ and
$T_{\gamma+\beta}B$ exist uniformly with respect to $t\in\mathbb{R}$ and
$x\in Q$, where $Q$ is an arbitrary compact subset of $V$. Furthermore, we have
$$T_{\gamma}T_{\beta}A_{i}=T_{\gamma+\beta}A_{i},~i=1,2,\quad T_{\gamma}T_{\beta}B=T_{\gamma+\beta}B.$$
It can be verified that
$T_{\beta}A_{i}$, $T_{\beta}B$, $T_{\gamma}T_{\beta}A_{i}$, $i=1,2$ and $T_{\gamma}T_{\beta}B$
satisfy (H1), (H2$'$), (H3), (H4), (H5) and (HL) with the same constants $\lambda$, $c_{1}$, $c_{2}$,
$c_{3}$, $c'_{2}$, $c'_{3}$, $M_{0}$, $\alpha_{i}$, $i=1,2$ and $L_{B}$.

Let $Y(\cdot)$, $Z_{1}(\cdot)$ be the unique $L^{2}$-bounded solutions of equation \eqref{eqSPDE1}
with coefficients $T_{\beta}A_{i}$, $T_{\beta}B$ and $T_{\gamma}T_{\beta}A_{i}$,
$T_{\gamma}T_{\beta}B$, $i=1,2$, respectively. In view of Proposition \ref{CBprop} and the uniqueness
in law of the solutions for equation \eqref{eqSPDE1}, we obtain
\begin{equation}\label{APeq1}
  \lim\limits_{m\rightarrow\infty}d_{BL}(\L(X(\cdot+\beta_{m})),\L(Y))=0
  \quad {\rm{in}}~ Pr(C(\R,H))
\end{equation}
and
\begin{equation}\label{APeq2}
  \lim\limits_{n\rightarrow\infty}d_{BL}(\L(Y(\cdot+\gamma_{n})),\L(Z_{1}))=0
  \quad {\rm{in}}~ Pr(C(\R,H)).
\end{equation}
Similarly, we have
\begin{equation}\label{APeq3}
  \lim\limits_{n\rightarrow\infty}d_{BL}(\L(X(\cdot+\gamma_{n}+\beta_{n})),\L(Z_{2}))=0
  \quad {\rm{in}}~ Pr(C(\R,H)),
\end{equation}
where $Z_{2}(\cdot)$ is the unique $L^{2}$-bounded solution to the following equation
\begin{equation*}
  \d X(t)=T_{\gamma+\beta}A(t,X(t))\d t+T_{\gamma+\beta}B(t,X(t))\d W(t).
\end{equation*}
Since the $L^{2}$-bounded solution to equation \eqref{eqSPDE1} is unique, \eqref{APeq1}--\eqref{APeq3} imply
\begin{align*}
  & \lim\limits_{n\rightarrow\infty}\lim\limits_{m\rightarrow\infty}
    d_{BL}(\L(X(\cdot+\gamma_{n}+\beta_{m})),\L(Z))\\
  & =\lim\limits_{n\rightarrow\infty}d_{BL}(\L(X(\cdot+\gamma_{n}+\beta_{n})),\L(Z))=0
    \quad {\rm{in}}~ Pr(C(\R,H)),
\end{align*}
where $Z:=Z_{1}=Z_{2}$.
Thus
 $$T_{\gamma}T_{\beta}\widehat{\mu}=T_{\gamma+\beta}\widehat{\mu}.$$
The proof is complete.
\end{proof}

\subsection{Almost automorphic solutions}

In this subsection, suppose that mappings $A$ and $B$ of
equation \eqref{eqSPDE1} are uniformly almost automorphic.
Then we prove that the $L^{2}$-bounded solution $X(\cdot)$ is almost automorphic in distribution.

\begin{theorem}\label{AAth}
Consider equation \eqref{eqSPDE1}.
Assume that {\rm{(H1)}}, {\rm{(H2$'$)}}, {\rm{(H3)}},  {\rm{(H4)}}, {\rm{(H5)}} and {\rm{(HL)}} hold.
Suppose further that mappings $A_{i}$, $i=1,2$ and $B$ are uniformly almost automorphic.
Then the unique $L^{2}$-bounded solution is almost automorphic in distribution.
\end{theorem}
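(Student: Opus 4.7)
The plan is to follow the template of the proof of Theorem \ref{APth} but to replace Bochner's two-sequence characterization of almost periodicity with the single-sequence-plus-reverse-limit definition of almost automorphy from Definition \ref{Bochaa}. Fix any sequence $\gamma' = \{\gamma_n'\} \subset \R$. Since $A_1$, $A_2$, and $B$ are uniformly almost automorphic, a standard diagonal extraction produces a common subsequence $\gamma = \{\gamma_n\} \subset \gamma'$ and limit maps $\widetilde{A}_i : \R \times V_i \to V_i^*$ and $\widetilde{B} : \R \times V \to L_2(U, H)$ such that, uniformly on $[a, b] \times Q$ for every finite interval $[a, b]$ and every compact $Q \subset V$,
$$A_i(t + \gamma_n, x) \to \widetilde{A}_i(t, x), \quad B(t + \gamma_n, x) \to \widetilde{B}(t, x),$$
and
$$\widetilde{A}_i(t - \gamma_n, x) \to A_i(t, x), \quad \widetilde{B}(t - \gamma_n, x) \to B(t, x).$$
Passing to the limit in the defining inequalities, the limit coefficients $\widetilde{A} := \widetilde{A}_1 + \widetilde{A}_2$ and $\widetilde{B}$ inherit (H1), (H2$'$), (H3), (H4), (H5), and (HL) with the \emph{same} constants as $A$ and $B$.

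By Theorem \ref{Boundedth}, the limit equation
$$\d Z(t) = \widetilde{A}(t, Z(t))\,\d t + \widetilde{B}(t, Z(t))\,\d W(t)$$
admits a unique $L^2$-bounded continuous solution $Z(\cdot)$, and Proposition \ref{tightprop} gives that $\{\mathcal{L}(Z(t))\}_{t \in \R}$ is tight. Mimicking the change-of-variables computation from the proof of Theorem \ref{Pth} with the shifted Wiener process $\widetilde{W}(\cdot) := W(\cdot + \gamma_n) - W(\gamma_n)$, one checks that $X_n(\cdot) := X(\cdot + \gamma_n)$ is (in law) the unique $L^2$-bounded solution of the equation with coefficients $A(\cdot + \gamma_n, \cdot)$, $B(\cdot + \gamma_n, \cdot)$. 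The tightness of $\{\mathcal{L}(X_n(t))\}_n$ at each $t$ is inherited from Proposition \ref{tightprop} applied to $X(\cdot)$, so Proposition \ref{CBprop} yields
$$\lim_{n \to \infty} d_{BL}(\mathcal{L}(X(\cdot + \gamma_n)), \mathcal{L}(Z)) = 0 \quad \text{in } Pr(C(\R, H)).$$

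A symmetric argument, applied to the shifted coefficients $\widetilde{A}(\cdot - \gamma_n, \cdot)$, $\widetilde{B}(\cdot - \gamma_n, \cdot)$ (which, by the reverse limits above, converge to $A$, $B$ on $V$) and using the tightness of $\{\mathcal{L}(Z(t))\}_{t \in \R}$, gives
$$\lim_{n \to \infty} d_{BL}(\mathcal{L}(Z(\cdot - \gamma_n)), \mathcal{L}(X)) = 0 \quad \text{in } Pr(C(\R, H)).$$
Setting $\psi(t) := \mathcal{L}(Z(t + \cdot)) \in Pr(C(\R, H))$ and recalling that $\widehat{\mu}(t) = \mathcal{L}(X(t + \cdot))$, the two displays above specialize, for every $t \in \R$, to
$$\lim_{n \to \infty} \widehat{\mu}(t + \gamma_n) = \psi(t), \quad \lim_{n \to \infty} \psi(t - \gamma_n) = \widehat{\mu}(t),$$
which is exactly almost automorphy in distribution in the sense of Definition \ref{apaalaw}. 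The step I expect to be most delicate is the identification of the translated process $X(\cdot + \gamma_n)$ with the $L^2$-bounded solution of the translated equation (justifying the substitution of driving Wiener process via Yamada--Watanabe, as in the proof of Theorem \ref{Pth}), together with the bookkeeping that verifies the constants in (H1)--(HL) and (H5) pass through the almost automorphic hull so that Propositions \ref{tightprop} and \ref{CBprop} apply uniformly along both $\gamma_n$ and $-\gamma_n$.
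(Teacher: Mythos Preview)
Your proposal is correct and follows essentially the same approach as the paper: extract a common subsequence along which the coefficients converge (forward and backward) to limit coefficients $\widetilde{A}_i,\widetilde{B}$ satisfying the same hypotheses, invoke Proposition~\ref{CBprop} (with tightness from Proposition~\ref{tightprop} under (H5)) to obtain convergence of $\mathcal{L}(X(\cdot+\gamma_n))$ to $\mathcal{L}(Z)$ and of $\mathcal{L}(Z(\cdot-\gamma_n))$ to $\mathcal{L}(X)$ in $Pr(C(\R,H))$. Your write-up is in fact slightly more explicit than the paper's (spelling out the tightness input, the Wiener-shift identification via Yamada--Watanabe, and the translation to Definition~\ref{apaalaw}), but the argument is identical.
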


\begin{proof}

Since $A_{1}$, $A_{2}$ and $B$ are uniformly almost automorphic, for any sequence $\beta'=\{\beta'_{n}\}$ in $\R$
there exists a subsequence $\beta=\{\beta_{n}\}\subset\beta'$ such that
\begin{equation*}
  \lim\limits_{n\rightarrow\infty}A_{i}(t+\beta_{n},x)=\widetilde{A}_{i}(t,x),~i=1,2,\quad
  \lim\limits_{n\rightarrow\infty}B(t+\beta_{n},x)=\widetilde{B}(t,x)
\end{equation*}
and
\begin{equation*}
  \lim\limits_{n\rightarrow\infty}\widetilde{A}_{i}(t-\beta_{n},x)=A_{i}(t,x),~i=1,2,\quad
  \lim\limits_{n\rightarrow\infty}\widetilde{B}(t-\beta_{n},x)=B(t,x).
\end{equation*}
These limits exist uniformly with respect to $[a,b]\times Q$, where $[a,b]$ is an arbitrary finite
interval and $Q$ an arbitrary compact subset of $V$.
Note that $\widetilde{A}_{i}$, $i=1,2$ and $\widetilde{B}$ satisfy (H1), (H2$'$), (H3),
(H4), (H5) and (HL) with the same constants $\lambda$, $c_{1}$, $c_{2}$, $c_{3}$, $c'_{2}$,
$c'_{3}$, $M_{0}$, $\alpha_{i},~i=1,2$ and $L_{B}$.

Let $Y$ be the unique $L^{2}$-bounded solution of  equation \eqref{eqSPDE1} corresponding
to $\widetilde{A}_{i}$, $i=1,2$ and $\widetilde{B}$.
In view of Proposition \ref{CBprop} and the uniqueness in law of the solutions for equation \eqref{eqSPDE1},
we obtain
\begin{equation*}
  \lim\limits_{n\rightarrow\infty}d_{BL}(\L(X(\cdot+\beta_{n})),\L(Y))=0 \quad {\rm{in}}~Pr(C(\R,H)).
\end{equation*}
Similarly, we have
\begin{equation*}
  \lim\limits_{n\rightarrow\infty}d_{BL}(\L(Y(\cdot-\beta_{n})),\L(X))=0 \quad {\rm{in}} ~ Pr(C(\R,H)).
\end{equation*}
The proof is complete.
\end{proof}

\section{SPDEs with additive noise}

In this section we consider the following stochastic partial differential equation
driven by additive noise
\begin{equation}\label{spdean}
  \d X(t)=A(t, X(t))\d t+B(t)\d W(t).
\end{equation}
Assume that $V=V_{1}=V_{2}$. Then we have $\alpha_{1}=\alpha_{2}=:\alpha$,
$c_{2}=c_{2}'=:c_{2}$, $c_{3}=c'_{3}=:c_{3}$.
For equation (\ref{spdean}), we can weaken the strictly monotone
condition (H2$'$) to the following strong monotone condition.
This condition has a much wider application; see e.g. \cite{GLR}.

(H2$''$) (Strong monotonicity) There exist constants $r\geq2$ and $\lambda>0$
such that for all $u$, $v\in V$, $t\in\mathbb{R}$
\begin{equation*}
  2_{V^{*}}\langle A(t,u)-A(t,v),u-v\rangle_{V}\leq-\lambda\|u-v\|_{H}^{r}.
\end{equation*}

Note that when $r=2$, (H2$''$) is the same as (H2$'$). Therefore, we only consider
the case $r>2$ in this section.

\begin{lemma}\label{anesti}
Consider equation \eqref{spdean}. Assume that {\rm{(H1)}, (H2$''$), (H3)} and {\rm(H4)} hold,
and there exists a constant $M_{3}>0$ such that
\begin{equation*}
  \|B(t)\|_{L_{2}(U,H)}^{2}\leq M_{3}.
\end{equation*}
Let $\zeta_{s}\in L^{2}(\Omega,\mathcal{F}_{s},P;H)$. Suppose that
$X(t,s,\zeta_{s})$, $t\geq s$ is a solution to equation \eqref{spdean} with initial condition $X(s)=\zeta_{s}$.
Then for any $\eta>0$, there exists a constant $M_{4}\geq0$ depending only on
$r$, $\eta$, $\alpha$, $c_{1}$, $c_{2}$, $M_{0}$ and $M_{3}$, such that
\begin{equation}\label{ANb}
  E\|X(t,s,\zeta_{s})\|_{H}^{2}\leq E\|\zeta_{s}\|_{H}^{2}{\rm{e}}^{-\eta(t-s)}+M_{4}.
\end{equation}
\end{lemma}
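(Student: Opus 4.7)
The plan is to derive an effective dissipativity estimate of the form
\[
2{}_{V^{*}}\langle A(t,v),v\rangle_{V}\;\leq\;-\eta\|v\|_{H}^{2}+M_{0,\eta}',\qquad v\in V,\ t\in\R,
\]
for some constant $M_{0,\eta}'>0$ depending only on $r,\eta,\alpha,c_{1},c_{2},c_{2}',M_{0}$, and then to mimic the proof of the analogous $L^{2}$-bound \eqref{ineq2} by applying It\^o's formula to $e^{\eta(t-s)}\|X(t,s,\zeta_{s})\|_{H}^{2}$. The role of the uniform bound $\|B(t)\|^{2}_{L_{2}(U,H)}\leq M_{3}$ is to absorb the diffusion contribution into a constant, which is what makes it possible to replace the strict monotonicity (H2$'$) by the weaker strong monotonicity (H2$''$) with $r>2$.

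To derive the effective dissipativity, I would substitute $v=0$ in (H2$''$) and use (H4) to bound $\|A(t,0)\|_{V^{*}}\leq 2M_{0}$; Young's inequality with exponent $\alpha$ then gives
\[
2{}_{V^{*}}\langle A(t,u),u\rangle_{V}\;\leq\;\epsilon\|u\|_{V}^{\alpha}+C_{\epsilon}-\lambda\|u\|_{H}^{r}.
\]
Separately, (H3) together with $\|B(t,u)\|^{2}_{L_{2}(U,H)}\geq 0$ gives
\[
2{}_{V^{*}}\langle A(t,u),u\rangle_{V}\;\leq\;c_{1}\|u\|_{H}^{2}-(c_{2}+c_{2}')\|u\|_{V}^{\alpha}+M_{0}.
\]
A suitable convex combination $\theta\cdot(\text{first})+(1-\theta)\cdot(\text{second})$ with $\epsilon$ small and $\theta$ appropriately chosen eliminates the $\|u\|_{V}^{\alpha}$ term and produces
\[
2{}_{V^{*}}\langle A(t,u),u\rangle_{V}\;\leq\;-\theta\lambda\|u\|_{H}^{r}+(1-\theta)c_{1}\|u\|_{H}^{2}+C.
\]
Since $r>2$, a first Young step (with exponents $r/2$ and $r/(r-2)$) absorbs $(1-\theta)c_{1}\|u\|_{H}^{2}$ into $\tfrac{\theta\lambda}{2}\|u\|_{H}^{r}$ plus a constant; a second Young step then converts the residual $r$-power decay into the desired quadratic one, yielding the effective dissipativity estimate.

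With this estimate in hand, apply It\^o's formula (in the form of Remark~\ref{estB}(iv)) and the product rule to $e^{\eta(\sigma-s)}\|X(\sigma,s,\zeta_{s})\|_{H}^{2}$. Localizing via stopping times $\tau_{R}=\inf\{\sigma\geq s:\|X(\sigma)\|_{H}>R\}$ discards the stochastic integral after taking expectations, and the integrand reduces to
\[
\eta\|X(\sigma)\|_{H}^{2}+2{}_{V^{*}}\langle A(\sigma,\overline{X}(\sigma)),\overline{X}(\sigma)\rangle_{V}+\|B(\sigma)\|^{2}_{L_{2}(U,H)}\;\leq\;M_{0,\eta}'+M_{3}
\]
by the effective dissipativity and the hypothesis $\|B(\sigma)\|^{2}_{L_{2}(U,H)}\leq M_{3}$. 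Integrating, sending $R\to\infty$ with Fatou's lemma, and dividing by $e^{\eta(t-s)}$ yield \eqref{ANb} with $M_{4}=(M_{0,\eta}'+M_{3})/\eta$. The main obstacle is the careful bookkeeping inside the effective dissipativity derivation, namely keeping track of the positivity of the various constants across the convex combination and the two Young-type absorption steps; it is in the second of these absorption steps that the strict inequality $r>2$ (rather than merely $r\geq 2$) is used decisively.
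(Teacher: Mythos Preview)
Your proposal is correct and follows essentially the same approach as the paper: combine (H2$''$) at $v=0$, (H4), and (H3) via Young's inequality to control the $\|u\|_V^{\alpha}$ term, then use Young again (with exponents $r/2$ and $r/(r-2)$) to absorb the quadratic $\|u\|_H^{2}$ contribution into the $r$-power decay, and finish with It\^o's formula applied to $e^{\eta(t-s)}\|X(t)\|_H^{2}$. The only cosmetic differences are that you eliminate $\|u\|_V^{\alpha}$ by a convex combination whereas the paper substitutes (H3) self-referentially and solves for the left-hand side, and that you perform both Young absorptions before It\^o while the paper does the second one inside the It\^o computation; neither affects the substance of the argument.
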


\begin{proof}
By (H3), (H4), (H2$''$) and Young's inequality, we have
\begin{align*}
   & 2_{V^{*}}\langle A(t,u),u\rangle_{V}+\|B(t)\|^{2}_{L_{2}(U,H)} \\
   & \leq2_{V^{*}}\langle A(t,u)-A(t,0),u\rangle_{V}+2_{V^{*}}\langle A(t,0),u\rangle_{V}+M_{3}\\
   & \leq-\lambda\|u\|_{H}^{r}+2\|A(t,0)\|_{V^{*}}\|u\|_{V}+M_{3}\\
   & \leq-\lambda\|u\|_{H}^{r}+\frac{2(\alpha-1)}{\alpha}\|A(t,0)\|_{V^{*}}^{\frac{\alpha}{\alpha-1}}
     +\frac{2}{\alpha}\|u\|_{V}^{\alpha}+M_{3}\\
   & \leq-\lambda\|u\|_{H}^{r}+\frac{2(\alpha-1)}{\alpha}M_{0}^{\frac{\alpha}{\alpha-1}}
     +\frac{2c_{1}}{\alpha c_{2}}\|u\|_{H}^{2}+\frac{2M_{0}}{\alpha c_{2}}\\
   & \quad-\frac{2}{\alpha c_{2}}\left(2_{V^{*}}\langle A(t,u),u\rangle_{V}+\|B(t)\|^{2}_{L_{2}(U,H)}\right)+M_{3}.
\end{align*}
Therefore, there exists a constant $\widetilde{M}_{0}=\widetilde{M}_{0}(\alpha,c_{2},M_{0},M_{3})$ such that
\begin{equation}\label{aaa}
  2_{V^{*}}\langle A(t,u),u\rangle_{V}+\|B(t)\|^{2}_{L_{2}(U,H)}\leq-\frac{\alpha c_{2}}{\alpha c_{2}+2}
  \lambda\|u\|_{H}^{r}+\frac{2c_{1}}{\alpha c_{2}+2}\|u\|_{H}^{2}+\widetilde{M}_{0}.
\end{equation}

According to Young's inequality and \eqref{aaa}, we obtain
\begin{align*}
   & E\left({\rm{e}}^{\eta(t-s)}\|X(t,s,\zeta_{s})\|_{H}^{2}\right) \\
   & =E\|\zeta_{s}\|_{H}^{2}+\int_{s}^{t}\eta {\rm{e}}^{\eta(\sigma-s)}E\|X(\sigma,s,\zeta_{s})\|_{H}^{2}\d\sigma\\
   & \quad +E\int_{s}^{t}{\rm{e}}^{\eta(\sigma-s)}\left(2_{V^{*}}\langle A(\sigma,\overline{X}(\sigma,s,\zeta_{s})),
     \overline{X}(\sigma,s,\zeta_{s})\rangle_{V}+\|B(\sigma)\|^{2}_{L_{2}(U,H)}\right)\d\sigma\\
   & \leq E\|\zeta_{s}\|_{H}^{2}+\int_{s}^{t}{\rm{e}}^{\eta(\sigma-s)}\widetilde{M}_{0}\d\sigma\\
   & \quad +E\int_{s}^{t} {\rm{e}}^{\eta(\sigma-s)}\left(\left(\eta+\frac{2c_{1}}{\alpha c_{2}+2}\right)
     \|\overline{X}(\sigma,s,\zeta_{s})\|_{H}^{2}
     -\frac{\alpha c_{2}}{\alpha c_{2}+2}\lambda\|\overline{X}(\sigma,s,\zeta_{s})\|_{H}^{r}\right)\d\sigma\\
   & \leq E\|\zeta_{s}\|_{H}^{2}+\int_{s}^{t}{\rm{e}}^{\eta(\sigma-s)}\widetilde{M}_{0}\d\sigma
     +E\int_{s}^{t} {\rm{e}}^{\eta(\sigma-s)}\bigg[-\frac{\alpha c_{2}}{\alpha c_{2}+2}
     \lambda\|\overline{X}(\sigma,s,\zeta_{s})\|_{H}^{r}\\
   & \qquad +\epsilon\|\overline{X}(\sigma,s,\zeta_{s})\|_{H}^{r}
     +C_{\epsilon}\left(\eta+\frac{2c_{1}}{\alpha c_{2}+2}\right)^{\frac{r}{r-2}}\bigg]\d\sigma.\\
\end{align*}
Choosing $\epsilon<\frac{\alpha c_{2}}{\alpha c_{2}+2}\lambda$, we get
\begin{equation*}
  E\left({\rm{e}}^{\eta(t-s)}\|X(t,s,\zeta_{s})\|_{H}^{2}\right)\leq E\|\zeta_{s}\|_{H}^{2}
  +\int_{s}^{t}{\rm{e}}^{\eta(\sigma-s)}C_{1}\d\sigma,
\end{equation*}
where the constant $C_{1}$ depends only on $\epsilon$, $r$, $\eta$, $\alpha$, $c_{1}$, $c_{2}$
and $\widetilde{M}_{0}$.
It follows that
\begin{equation*}
 E\|X(t,s,\zeta_{s})\|_{H}^{2}\leq E\|\zeta_{s}\|_{H}^{2}{\rm{e}}^{-\eta(t-s)}+M_{4},
\end{equation*}
where $M_{4}=\frac{C_{1}}{\eta}$.
\end{proof}

\begin{lemma}\label{adcont}
Consider equation \eqref{spdean}.
Assume that {\rm{(H1)}}, {\rm{(H2$''$)}}, {\rm{(H3)}} and {\rm{(H4)}} hold.
Let $X$ and $Y$ be solutions of equation \eqref{spdean}.
Then for any $u\leq \sigma\leq t$ we have the estimate
\begin{align}\label{ANSCP}
   & E\|X(t,u,X(u))-Y(t,\sigma,Y(\sigma))\|^{2}_{H} \\\nonumber
   & \leq E\|X(\sigma,u,X(u))-Y(\sigma)\|_{H}^{2}\wedge \left\{\frac{\lambda}{2}(r-2)
   (t-\sigma)\right\}^{-\frac{2}{r-2}}.\nonumber
\end{align}

In particular, for any $t\in\R$ there exists some random variable $X(t)$ such that
\begin{equation}\label{L2lim2}
  X(t,-n,0)\rightarrow X(t)\quad {\rm{in}}~ L^{2}(\Omega,P;H)
  ~ {\rm{as}}~ n\rightarrow\infty.
\end{equation}

\end{lemma}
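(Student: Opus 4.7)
The plan is to exploit the crucial simplification arising from additive noise: in the difference equation for two solutions $X$ and $Y$, the stochastic integrals $\int B(t)\,dW(t)$ cancel identically, so the difference evolves according to a pathwise deterministic dynamics driven only by the drift.

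First I would apply It\^o's formula (Remark \ref{estB} (iv)) to $\|X(t)-Y(t)\|_H^2$ on $[\sigma,t]$. Because $B(t)$ does not depend on the state, the martingale part vanishes, leaving
\[
\|X(t)-Y(t)\|_H^2 = \|X(\sigma)-Y(\sigma)\|_H^2
 + \int_\sigma^t 2{}_{V^*}\langle A(s,\overline{X}(s))-A(s,\overline{Y}(s)),\overline{X}(s)-\overline{Y}(s)\rangle_V\,ds.
\]
Taking expectation and invoking the strong monotonicity (H2$''$) yields, with $f(s):=E\|X(s)-Y(s)\|_H^2$,
\[
f(t) \le f(\sigma) - \lambda\int_\sigma^t E\|X(s)-Y(s)\|_H^r\,ds.
\]
Since $r\ge 2$, Jensen's inequality gives $E\|X(s)-Y(s)\|_H^r \ge f(s)^{r/2}$, so
\[
f(t) \le f(\sigma) - \lambda \int_\sigma^t f(s)^{r/2}\,ds. \tag{$\ast$}
\]

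Two consequences of $(\ast)$ give the two terms in the minimum. Directly from $(\ast)$, $f$ is non-increasing, so $f(t)\le f(\sigma) = E\|X(\sigma,u,X(u))-Y(\sigma)\|_H^2$. For the second bound, I would work on the set where $f(t)>0$ and differentiate: $(\ast)$ is absolutely continuous, so $f'(t)\le -\lambda f(t)^{r/2}$ a.e., which can be rewritten as
\[
\frac{d}{dt}\left(f(t)^{-(r-2)/2}\right) = -\tfrac{r-2}{2}f(t)^{-r/2}f'(t) \ge \tfrac{(r-2)\lambda}{2}.
\]
Integrating from $\sigma$ to $t$ and discarding the nonnegative initial term $f(\sigma)^{-(r-2)/2}$ gives
\[
f(t)^{-(r-2)/2} \ge \tfrac{(r-2)\lambda}{2}(t-\sigma),
\qquad\text{i.e.}\qquad f(t) \le \left\{\tfrac{\lambda}{2}(r-2)(t-\sigma)\right\}^{-2/(r-2)},
\]
which is vacuously true where $f(t)=0$. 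Combining both bounds yields \eqref{ANSCP}.

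For the second conclusion, I apply \eqref{ANSCP} to the pair $X(\cdot,-n,0)$ and $X(\cdot,-m,0)$ with $n>m$, choosing $\sigma=-m$ (so that $Y(\sigma)=0$). Using only the second term of the minimum, which does not depend on the initial data,
\[
E\|X(t,-n,0)-X(t,-m,0)\|_H^2 \le \left\{\tfrac{\lambda}{2}(r-2)(t+m)\right\}^{-2/(r-2)} \xrightarrow{m\to\infty} 0
\]
uniformly in $n\ge m$. Thus $\{X(t,-n,0)\}$ is Cauchy in $L^2(\Omega,P;H)$ and the limit $X(t)$ exists, proving \eqref{L2lim2}.

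The main subtlety I anticipate is the ODE comparison argument for $(\ast)$: one must justify the almost-everywhere differentiability of $f$ (which follows because the right side of $(\ast)$ is absolutely continuous in $t$) and handle the possibility that $f$ vanishes somewhere in $[\sigma,t]$; this is easily dispatched by noting $f$ is non-increasing, so once $f$ hits zero it stays zero and the claimed bound is trivial. Everything else is standard, and crucially the argument would fail for multiplicative noise because the martingale term in the Itô expansion would not drop out and the deterministic ODE comparison would be unavailable.
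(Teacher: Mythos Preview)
Your proposal is correct and follows the standard argument that the paper itself defers to (the paper's proof consists of a single line referencing Lemma~2.5 in \cite{GLR}). The key steps---cancellation of the stochastic terms due to additive noise, strong monotonicity plus Jensen to get the differential inequality $f' \le -\lambda f^{r/2}$, and the explicit ODE comparison yielding the polynomial decay---are exactly what is needed, and your handling of the vanishing case and the Cauchy argument for \eqref{L2lim2} is clean.
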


\begin{proof}
The proof is analogous to Lemma 2.5 in \cite{GLR}.
\end{proof}

Similar to Section 3, we will prove that the limit process $X(\cdot)$ in \eqref{L2lim2} is a solution
to equation \eqref{spdean}. To this end, we also need some uniform estimates.

\begin{lemma}\label{anbound}
Consider equation \eqref{spdean}.
Assume that the conditions of Lemma \ref{anesti} hold. For any fixed interval
$[a,b]\subset\mathbb{R}$ denote
$$K:=L^{\alpha}([a,b]\times\Omega,\d t\otimes P;V), \quad K^{*}:=
L^{\frac{\alpha}{\alpha-1}}([a,b]\times\Omega,\d t\otimes P;V^{*}).$$
Then there exists a constant $M_{2}$, depending only on $M_{0}$, $M_{4}$, $c_{1}$,
$c_{2}$  and $[a,b]$, such that
\begin{equation*}
  \sup_{t\in\R}E\|X(t,-n,0)\|_{H}^{2}+\|\overline{X}(\cdot,-n,0)\|_{K}
  +\|A(\cdot,\overline{X}(\cdot,-n,0))\|_{K^{*}}\leq M_{2}
\end{equation*}
for all $-n\leq a$.
\end{lemma}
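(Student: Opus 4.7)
The argument parallels the proof of Lemma~\ref{Bestimate} almost verbatim, with Lemma~\ref{anesti} supplying the uniform $L^2$-estimate in place of \eqref{ineq2}; the only adaptations needed are the identifications $V_1=V_2=V$, $\alpha_1=\alpha_2=\alpha$, $c_2=c_2'=c_2$, $c_3=c_3'=c_3$ valid in the additive-noise setting.

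First, taking $s=-n$, $\zeta_s=0$ and fixing some $\eta>0$ in Lemma~\ref{anesti}, the estimate \eqref{ANb} gives
\[
E\|X(t,-n,0)\|_H^{2}\leq M_{4}\quad\text{for all }t\geq-n,
\]
so the first summand $\sup_{t\in\R}E\|X(t,-n,0)\|_H^{2}$ is bounded by $M_4$ uniformly in $n$.

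Second, for the $K$-norm, apply the It\^o formula of Remark~\ref{estB}(iv) together with the product rule to $e^{-c_1(t-a)}\|X(t,-n,0)\|_H^{2}$ on $[a,b]$. The stochastic integral is a true martingale after standard localization and vanishes in expectation (use the stopping times $\gamma^n(R)$ as in the proof of Theorem~\ref{conlemma}(i) and pass $R\to\infty$ by Fatou). Coercivity (H3) in the current setting reads
\[
2_{V^{*}}\langle A(t,v),v\rangle_{V}+\|B(t)\|_{L_2(U,H)}^{2}\leq c_1\|v\|_H^{2}-c_2\|v\|_V^{\alpha}+M_0,
\]
so one obtains
\[
E\bigl(e^{-c_1(t-a)}\|X(t,-n,0)\|_H^{2}\bigr)+c_2\,E\!\int_a^{t}e^{-c_1(\sigma-a)}\|\overline{X}(\sigma,-n,0)\|_V^{\alpha}\,d\sigma\leq E\|X(a,-n,0)\|_H^{2}+M_0\!\int_a^{t}e^{-c_1(\sigma-a)}\,d\sigma.
\]
Setting $t=b$ and invoking the bound from the first step yields a constant $C$, depending only on $M_0$, $M_4$, $c_1$, $c_2$ and $[a,b]$, such that
\[
E\!\int_a^{b}\|\overline{X}(\sigma,-n,0)\|_V^{\alpha}\,d\sigma\leq C,
\]
which controls $\|\overline{X}(\cdot,-n,0)\|_{K}$.

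Third, boundedness (H4) gives $\|A(t,v)\|_{V^{*}}\leq\|A_1(t,v)\|_{V^{*}}+\|A_2(t,v)\|_{V^{*}}\leq 2c_3\|v\|_V^{\alpha-1}+2M_0$, so
\[
E\!\int_a^{b}\|A(\sigma,\overline{X}(\sigma,-n,0))\|_{V^{*}}^{\alpha/(\alpha-1)}\,d\sigma\leq C'\!\left(E\!\int_a^{b}\|\overline{X}(\sigma,-n,0)\|_V^{\alpha}\,d\sigma+(b-a)M_0^{\alpha/(\alpha-1)}\right),
\]
and the $K^{*}$-bound follows from the previous step. Combining the three bounds yields the desired $M_2$.

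There is no genuine obstacle: the proof is essentially that of Lemma~\ref{Bestimate}, the only novelty being that the uniform bound $E\|X(t,-n,0)\|_H^{2}\leq M_4$ must now be drawn from Lemma~\ref{anesti} (which rests on strong monotonicity (H2$''$)) rather than from \eqref{ineq2} (which rested on strict monotonicity (H2$'$)). The justification of the vanishing of the stochastic-integral expectation via localization is the only technical point that warrants care.
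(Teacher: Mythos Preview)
Your proof is correct and follows essentially the same approach as the paper's own proof, which simply invokes \eqref{ANb} for the $L^2$-bound and then says ``Similar to the proof of Lemma~\ref{Bestimate}'' for the $K$ and $K^{*}$ estimates. You have spelled out in detail precisely those steps the paper leaves implicit---the product-rule/It\^o computation with coercivity (H3), the localization to justify dropping the stochastic integral, and the use of (H4) for the $K^{*}$-bound---so there is no discrepancy in method.
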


\begin{proof}
In view of  \eqref{ANb}, we have
$$E\|X(t,-n,0)\|_{H}^{2}\leq M_{4}.$$
Similar to the proof of Lemma \ref{Bestimate}, we obtain
$$\|\overline{X}(\cdot,-n,0)\|_{K}+\|A(\cdot,\overline{X}(\cdot,-n,0))\|_{K^{*}}\leq M_{2}.$$
The proof is complete.
\end{proof}

Under the conditions of Lemma \ref{anesti}, we obtain the following conclusion
which is as same as Theorem \ref{Boundedth}.
\begin{theorem}
Suppose that the conditions of Lemma \ref{anesti} hold,
then there exists a unique $L^{2}$-bounded continuous $H$-valued solution $X(t)$, $t\in\mathbb{R}$ to
equation \eqref{spdean}. Moreover, the mapping $\widehat{\mu}: \mathbb{R}\rightarrow Pr(H)$
defined by $\widehat{\mu}(t):=P\circ[X(t)]^{-1}$, is unique with the following properties:
\begin{enumerate}
  \item $L^{2}$-boundedness: $\sup\limits_{t\in\mathbb{R}}\int_{H}\|x\|_{H}^{2}\widehat{\mu}(t)(\d x)<+\infty$;
  \item Flow property: $\mu(t,s,\widehat{\mu}(s))=\widehat{\mu}(t)$~for~all~$t\geq s$.
\end{enumerate}
Recalling that $\mu(t,s,\mu_{0})$ denotes the distribution of $X(t,s,\zeta_{s})$, with
$\mu_{0}=P\circ\zeta_{s}^{-1}$.
\end{theorem}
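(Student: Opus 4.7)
The plan is to mirror the argument of Theorem \ref{Boundedth}, exploiting the simplifications afforded by additive noise together with the deterministic-time contraction built into Lemma \ref{adcont}.

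For existence I would take the candidate $L^{2}$-bounded solution to be the limit process $X(t)$ supplied by \eqref{L2lim2} and combine this with the uniform estimates of Lemma \ref{anbound}. On any fixed interval $[a,b]$, the reflexivity of $K$ and $K^{*}$ yields, along a subsequence, $\overline{X}(\cdot,-n,0)\rightharpoonup\overline{X}(\cdot)$ weakly in $K$ and $A(\cdot,\overline{X}(\cdot,-n,0))\rightharpoonup Y(\cdot)$ weakly in $K^{*}$. Because $B(t)$ does not depend on the state, the stochastic integral $\int_a^{\cdot} B(\sigma)\d W(\sigma)$ is the same for every $n$ and passes to the limit trivially, so the limit satisfies $X(t)=X(a)+\int_a^t Y(\sigma)\d\sigma+\int_a^t B(\sigma)\d W(\sigma)$ for $t\in[a,b]$. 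The identification $Y=A(\cdot,\overline{X})$ then proceeds by the Minty monotonicity trick from the proof of Theorem \ref{Boundedth}: one tests against $\phi=\overline{X}-\epsilon\widetilde\phi v$, divides by $\epsilon$ and lets $\epsilon\to 0$, invoking (H1) and (H4); with (H2$''$) in place of (H2$'$) the same manipulations carry through, and the additive-noise structure further eliminates the $B$-contributions from the analogue of \eqref{ineq3}. The bound $\sup_{t\in\R}E\|X(t)\|_H^2<\infty$ follows from \eqref{ANb} together with Fatou's lemma applied along \eqref{L2lim2}.

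Uniqueness of the $L^{2}$-bounded solution is then immediate from the pathwise contraction \eqref{ANSCP}. If $X$ and $Y$ are two such solutions and $-n\le t$, then
\[
E\|X(t)-Y(t)\|_H^2 = E\|X(t,-n,X(-n))-Y(t,-n,Y(-n))\|_H^2 \le \left\{\tfrac{\lambda}{2}(r-2)(t+n)\right\}^{-\frac{2}{r-2}},
\]
and letting $n\to\infty$ forces $X(t)=Y(t)$ in $L^{2}(\Omega,P;H)$. This is a noteworthy improvement over the strictly monotone setting, because \eqref{ANSCP} delivers deterministic decay uniform in the initial data --- the distinctive new feature of (H2$''$). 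For the measure part, the flow property $\mu(t,s,\widehat\mu(s))=\widehat\mu(t)$ is read off from the Chapman--Kolmogorov relation $\mu(t,s,\L(X(s,-n,0)))=\L(X(t,-n,0))$ combined with the Feller property and \eqref{L2lim2}, since $L^{2}$-convergence entails $d_{BL}$-convergence of marginals. Given two solutions $\mu_{1},\mu_{2}$ of (i)--(ii), one couples $\zeta_{n,i}\sim\mu_i(-n)$ and estimates
\[
d_{BL}(\mu_1(t),\mu_2(t)) \le \bigl(E\|X(t,-n,\zeta_{n,1})-X(t,-n,\zeta_{n,2})\|_H^2\bigr)^{1/2} \le \left\{\tfrac{\lambda}{2}(r-2)(t+n)\right\}^{-\frac{1}{r-2}} \to 0.
\]

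The main obstacle, as in Theorem \ref{Boundedth}, is the Minty-type identification of the weak limit $Y$ with $A(\cdot,\overline{X})$, since (H2$''$) offers only the weaker power-type contraction $-\lambda\|u-v\|_H^r$ rather than the square contraction available under (H2$'$); however, because the noise is additive the $B$-term drops out of the monotonicity inequality entirely, so the identification in fact simplifies compared to the multiplicative-noise case treated in Theorem \ref{Boundedth}.
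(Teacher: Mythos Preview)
Your proposal is correct and follows essentially the same route as the paper: weak subsequential limits in $K$ and $K^{*}$ furnished by Lemma \ref{anbound}, the Minty monotonicity trick to identify $Y=A(\cdot,\overline{X})$ (with the $B$-term dropping out exactly as you say), uniqueness of the $L^{2}$-bounded solution and of $\widehat{\mu}$ via the deterministic contraction \eqref{ANSCP}. The only cosmetic difference is that for the flow property the paper does not invoke the abstract Feller property but instead reads off the needed continuity directly from \eqref{ANSCP}, namely $E\|X(t,s,X(s))-X(t,s,X(s,-n,0))\|_H^2\le E\|X(s)-X(s,-n,0)\|_H^2$; since (H2$''$) with additive noise implies (H2) with $c=0$, your appeal to the Feller property is equally valid.
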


\begin{proof}
For any fixed interval $[a,b]\subset\mathbb{R}$ we denote
$$K:=L^{\alpha}([a,b]\times\Omega,\d t\otimes P;V), \quad K^{*}:=
L^{\frac{\alpha}{\alpha-1}}([a,b]\times\Omega,\d t\otimes P;V^{*}).$$
Since $K$ is reflexive, we may assume, going if necessary to a subsequence, that

\begin{enumerate}
  \item[(1)] $X(\cdot,-n,0)\rightarrow X(\cdot)$  in $L^{2}([a,b]\times\Omega,\d t\otimes P;H)$ and
  $ \overline{X}(\cdot,-n,0)\rightarrow \overline{X}(\cdot)$ weakly in $K$;
  \item[(2)] $A(\cdot,\overline{X}(\cdot,-n,0))\rightarrow Y(\cdot)$ weakly in $K^{*}$.
\end{enumerate}
Thus for all $v\in V$, $\varphi\in L^{\infty}([a,b]\times\Omega)$ by Fubini's theorem we get
\begin{align*}
   & E\int_{a}^{b}~_{V^{*}}\langle X(t),\varphi(t)v\rangle_{V}\d t \\
   & =\lim_{n\rightarrow\infty}E\int_{a}^{b}~_{V^{*}}\langle X(t,-n,0),\varphi(t)v\rangle_{V}\d t\\
   & =\lim_{n\rightarrow\infty}E\left(\int_{a}^{b}~_{V^{*}}\langle X(a,-n,0)
     +\int_{a}^{t}A(\sigma,\overline{X}(\sigma,-n,0))\d\sigma
      +\int_{a}^{t}B(\sigma)\d W(\sigma),\varphi(t)v\rangle_{V}\d t\right)\\
   & =E\left(\int_{a}^{b}~_{V^{*}}\langle X(a)+\int_{a}^{t}Y(\sigma)\d\sigma
      +\int_{a}^{t}B(\sigma)\d W(\sigma),\varphi(t)v\rangle_{V}\d t\right).
\end{align*}
It follows that
\begin{equation*}
  X(t)=X(a)+\int_{a}^{t}Y(\sigma)\d\sigma+\int_{a}^{t}B(\sigma)\d W(\sigma),\quad\d t\otimes P\rm\mbox{-}a.e.
\end{equation*}
Thus, it remains to verify that
$$Y=A(\cdot,\overline{X}),\quad \d t\otimes P\rm\mbox{-}a.e. $$
To this end, for any $\phi\in K\cap L^{2}([a,b]\times\Omega,\d t\otimes P;H)$ we have
\begin{align}\label{000}
   & E\left(\|X(t,-n,0)\|_{H}^{2}-\|X(a,-n,0)\|_{H}^{2}\right) \\\nonumber
   & =E\left(\int_{a}^{t}\left(2_{V^{*}}\langle A(\sigma,\overline{X}(\sigma,-n,0)),
     \overline{X}(\sigma,-n,0)\rangle_{V}+\|B(\sigma)\|_{L_{2}(U,H)}^{2}\right)\d\sigma\right)\\\nonumber
   & =E\bigg(\int_{a}^{t}\Big(2_{V^{*}}\langle A(\sigma,\overline{X}(\sigma,-n,0))
     -A(\sigma,\phi(\sigma)),\overline{X}(\sigma,-n,0)-\phi(\sigma)\rangle_{V}\\\nonumber
   & \qquad+2_{V^{*}}\langle A(\sigma,\phi(\sigma)),\overline{X}(\sigma,-n,0)\rangle_{V}
    +\|B(\sigma)\|_{L_{2}(U,H)}^{2}\\\nonumber
   & \qquad+2_{V^{*}}\langle A(\sigma,\overline{X}(\sigma,-n,0))-A(\sigma,\phi(\sigma)),
     \phi(\sigma)\rangle_{V}\Big)\d\sigma\bigg)\\\nonumber
   & \leq E\bigg(\int_{a}^{t}\Big(2_{V^{*}}\langle A(\sigma,\overline{X}(\sigma,-n,0))
     -A(\sigma,\phi(\sigma)),\phi(\sigma)\rangle_{V}\\\nonumber
   & \qquad +2_{V^{*}}\langle A(\sigma,\phi(\sigma)),\overline{X}(\sigma,-n,0)\rangle_{V}
     +\|B(\sigma)\|_{L_{2}(U,H)}^{2}\Big)\d\sigma\bigg).
\end{align}

 Similar to the proof of \eqref{Bountheq2}, for given nonnegative
$\psi\in L^{\infty}([0,T],\d t;\mathbb{R})$,
first multiplying $\psi(t)$ on both sides of (5.6), then integrating with respect to $t$
from $a$ to $b$ and letting $n\rightarrow\infty$, we have
\begin{align}\label{111}
   & E\int_{a}^{b}\psi(t)\left(\|X(t)\|_{H}^{2}-\|X(a)\|_{H}^{2}\right)\d t \\\nonumber
   & \leq E\bigg(\int_{a}^{b}\psi(t)\int_{a}^{t}\Big(2_{V^{*}}\langle Y(\sigma)
     -A(\sigma,\phi(\sigma)),\phi(\sigma)\rangle_{V}\\\nonumber
   & \qquad+2_{V^{*}}\langle A(\sigma,\phi(\sigma)),\overline{X}(\sigma)\rangle_{V}
     +\|B(\sigma)\|_{L_{2}(U,H)}^{2}\Big)\d\sigma \d t\bigg).
\end{align}
In view of the product rule, we obtain
\begin{align}\label{222}
   & E\int_{a}^{b}\psi(t)\left(\|X(t)\|_{H}^{2}-\|X(a)\|_{H}^{2}\right)\d t \\\nonumber
   & =E\left(\int_{a}^{b}\psi(t)\int_{a}^{t}\left(2_{V^{*}}\langle Y(\sigma),\overline{X}(\sigma)\rangle_{V}
     +\|B(\sigma)\|_{L_{2}(U,H)}^{2}\right)\d\sigma \d t\right).
\end{align}
Therefore, \eqref{111} and \eqref{222} imply
\begin{equation}\label{ANine5}
  E\left(\int_{a}^{b}\psi(t)\int_{a}^{t}2_{V^{*}}\langle Y(\sigma)-A(\sigma,\phi(\sigma)),
  \overline{X}(\sigma)-\phi(\sigma)\rangle_{V}\d\sigma \d t\right)\leq0.
\end{equation}
Similar to the proof of Theorem \ref{Boundedth}, we obtain that
$Y=A(\cdot,\overline{X}),~\d t\otimes P$-a.e. This completes the existence proof, i.e.
\begin{equation*}
  X(t)=X(a)+\int_{a}^{t}A(\sigma,\overline{X}(\sigma))\d\sigma
  +\int_{a}^{t}B(\sigma)\d W(\sigma), \quad \d t\otimes P\rm\mbox{-}a.e.
\end{equation*}
By the arbitrariness of interval $[a,b]\subset \mathbb{R}$, we conclude that $X(\cdot)$ is a solution on $\mathbb{R}$.
It follows from Lemma \ref{anbound} that $\sup\limits_{t\in\mathbb{R}}E\|X(t)\|_{H}^{2}<\infty$.

Now we prove the uniqueness of $L^{2}$-bounded solution.
Let $X(\cdot)$, $Y(\cdot)$ be two $L^{2}$-bounded solutions, then by \eqref{ANSCP} we have
\begin{align*}
   & E\|X(t)-Y(t)\|_{H}^{2} \\
   & =E\|X(t,-n,X(-n))-Y(t,-n,Y(-n))\|_{H}^{2}\\
   & \leq\left\{\frac{\lambda}{2}(r-2)(t+n)\right\}^{-\frac{2}{r-2}}\rightarrow 0
   \quad {\rm{as}} ~ n\rightarrow\infty.
\end{align*}

Finally, we show that $\widehat{\mu}$ is unique with properties (i) and (ii).
Note that
\begin{equation*}
  \sup_{t\in\mathbb{R}}\int_{H}\|x\|_{H}^{2}\widehat{\mu}(t)(\d x)=
  \sup_{t\in\mathbb{R}}E\|X(t)\|_{H}^{2}<\infty.
\end{equation*}
According to the Chapman-Kolmogorov equation, we have
\begin{equation}\label{CKeq}
  \mu(t,s,\L(X(s,-n,0)))=\L(X(t,-n,0)).
\end{equation}
In view of \eqref{ANSCP}, we get
\begin{align*}
   & E\|X(t,s,X(s))-X(t,s,X(s,-n,0))\|_{H}^{2}\leq E\|X(s)-X(s,-n,0)\|_{H}^{2}.
\end{align*}
This inequality and \eqref{CKeq} yield
\begin{equation*}
  \mu(t,s,\widehat{\mu}(s))=\widehat{\mu}(t).
\end{equation*}
It remains to prove the uniqueness of $\widehat{\mu}$.
Let $\mu_{1}$, $\mu_{2}$ be two mappings which satisfy (i) and (ii), and let $\zeta_{n,1}$, $\zeta_{n,2}$
be random variables with distributions $\mu_{1}(-n)$, $\mu_{2}(-n)$ respectively. Consider the solutions
$X(t,-n,\zeta_{n,1})$, $X(t,-n,\zeta_{n,2})$ on $[-n,\infty)$, then we have
\begin{align*}
   & d_{BL}(\mu_{1}(t),\mu_{2}(t)) \\
   & =d_{BL}(\mu(t,-n,\mu_{1}(-n)),\mu(t,-n,\mu_{2}(-n)))\\
   & =\sup_{\|f\|_{BL}\leq1}\left|\int_{H}f(x)\d(\mu(t,-n,\mu_{1}(-n))-\mu(t,-n,\mu_{2}(-n)))\right|\\
   & =\sup_{\|f\|_{BL}\leq1}\left|\int_{H}[f(X(t,-n,\zeta_{n,1}))-f(X(t,-n,\zeta_{n,2}))]\d P\right|\\
   & \leq\left(E\|X(t,-n,\zeta_{n,1})-X(t,-n,\zeta_{n,2})\|_{H}^{2}\right)^{\frac{1}{2}}\\\
   & \leq\left\{\frac{\lambda}{2}(r-2)(t+n)\right\}^{-\frac{1}{r-2}}\rightarrow 0\quad{\rm{as}}
     ~ n\rightarrow \infty.
\end{align*}
Thus, $\mu_{1}(t)=\mu_{2}(t)$ for all $t\in\R$.
\end{proof}

Completely similar to Proposition \ref{CBprop}, Theorems \ref{Pth}, \ref{APth} and
\ref{AAth}, we can get the following proposition and theorems.

\begin{prop}\label{ANCBprop}
Consider equation \eqref{spdean}.
Suppose that $A$, $B$, $A_{n}$, $B_{n}$ satisfy {\rm{(H1)}}, {\rm{(H2$''$)}}, {\rm{(H3)}} and {\rm{(H4)}}
with the same constants $\lambda$, $r$, $c_{1}$, $c_{2}$, $c_{3}$, $M_{0}$, $\alpha$.
Let $X(\cdot)$, $X_{n}(\cdot)$ be the $L^{2}$-bounded solutions of equation \eqref{spdean}
corresponding to $A$, $B$ and $A_{n}$, $B_{n}$ respectively. Assume in addition that

\begin{enumerate}
  \item $\lim\limits_{n\rightarrow\infty}\|A_{n}(t,x)-A(t,x)\|_{V^{*}}=0$ for all $x\in V$, $t\in \mathbb{R}$;
  \item $\lim\limits_{n\rightarrow\infty}\|B_{n}(t)-B(t)\|_{L_{2}(U,H)}=0$ for all $x\in V$, $t\in \mathbb{R}$;
  \item for each $t\in \mathbb{R}$ the family of distributions
        $\{P\circ[X_{n}(t)]^{-1}\}_{n\in\mathbb{N}}$ is tight.
\end{enumerate}
Then
$$\lim_{n\rightarrow\infty}d_{BL}(\mathcal{L}(X_{n}),\mathcal{L}(X))=0\quad {\rm{in}}~Pr(C(\mathbb{R},H)).$$

In particular,
$$\lim_{n\rightarrow\infty}d_{BL}(\mathcal{L}(X_{n}(t+\cdot)),\mathcal{L}(X(t+\cdot)))=0
 \quad {\rm{in}}~Pr(C(\mathbb{R},H))~{\rm{for~all}}~t\in\mathbb{R}.$$
\end{prop}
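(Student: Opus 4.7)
The plan is to follow the same blueprint as the proof of Proposition \ref{CBprop}, with Theorem \ref{conlemma} replaced by its additive-noise analog tailored to condition (H2$''$). Specifically, I will first establish the following continuous dependence statement: if $\zeta_n^s\to\zeta^s$ in probability in $H$, $A_n(t,x)\to A(t,x)$ in $V^*$ and $B_n(t)\to B(t)$ in $L_2(U,H)$ pointwise, and $X_n$, $X$ solve the Cauchy problems for \eqref{spdean} with coefficients $(A_n,B_n)$, $(A,B)$ and initial data $\zeta_n^s$, $\zeta^s$ respectively, then $d_{BL}(\mathcal L(X_n),\mathcal L(X))\to 0$ in $Pr(C([s,\infty),H))$.

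To prove this continuous dependence result, I would apply It\^o's formula to $\|X_n(t)-X(t)\|_H^2$ with the weight ${\rm e}^{-c(\tau-s)-\sup_n\|\zeta_n^s\|_H}$ as in part (ii) of Theorem \ref{conlemma}, split the drift difference
\[
A_n(\sigma,\overline X_n)-A(\sigma,\overline X)
 = \bigl[A_n(\sigma,\overline X_n)-A_n(\sigma,\overline X)\bigr]
 +\bigl[A_n(\sigma,\overline X)-A(\sigma,\overline X)\bigr],
\]
and exploit (H2$''$) on the first piece, which contributes the non-positive term $-\lambda\|\overline X_n-\overline X\|_H^r$ that can simply be discarded. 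The second piece is controlled by the H\"older-type estimate already used in the proof of Theorem \ref{conlemma}, together with the uniform $L^{\alpha}([a,b]\times\Omega;V)$ bound on $\overline X_n$ provided by Lemma \ref{anbound}. Crucially, since $B$ does not depend on the state, the stochastic integral $\int_s^\tau(B_n(\sigma)-B(\sigma))\d W(\sigma)$ is a martingale with deterministic quadratic variation, so Burkholder-Davis-Gundy reduces it to $\bigl(\int_s^t\|B_n-B\|^2_{L_2(U,H)}\d\sigma\bigr)^{1/2}$, which tends to zero by Lebesgue's dominated convergence together with the deterministic bound $\|B_n(t)\|_{L_2(U,H)}^2\le M_3$. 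Gronwall's lemma then yields convergence of $\sup_{s\leq\tau\leq t}\|X_n(\tau)-X(\tau)\|_H^2$ to $0$ in probability, and the Skorohod representation theorem together with uniqueness in law promotes this to distributional convergence on the path space $C([s,\infty),H)$.

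With the continuous dependence result in hand, the rest of the argument is essentially identical to the proof of Proposition \ref{CBprop}. By (the additive-noise analog of) Remark \ref{Remc} it suffices to show $\lim_{n\to\infty}d_{BL}(\mathcal L(X_n(t)),\mathcal L(X(t)))=0$ in $Pr(H)$ for every $t\in\R$. Given a sequence $\gamma'=\{\gamma_k'\}\subset\N$, I invoke the tightness assumption (iii) at each integer $-r$ and use a diagonal argument to extract a single subsequence $\gamma=\{\gamma_k\}$ with $\mathcal L(X_{\gamma_k}(-r))\Rightarrow\mu_r$ in $Pr(H)$ for every $r\ge 1$. Picking $\xi_r\sim\mu_r$ and setting $Y_r(t):=X(t,-r,\xi_r)$, the continuous dependence result yields
\[
\lim_{k\to\infty}d_{BL}(\mathcal L(X_{\gamma_k}),\mathcal L(Y_r))=0
\quad\text{in }Pr(C([-r,\infty),H))
\]
for each $r$, which in particular forces $\mathcal L(Y_r(t))=\mathcal L(Y_{r+1}(t))$ for $t\geq -r$. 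Thus $\nu(t):=\mathcal L(Y_r(t))$, $t\geq -r$, is well defined on all of $\R$, satisfies the flow relation $\nu(t)=\mu(t,s,\nu(s))$ by the Chapman--Kolmogorov equation, and is $L^2$-bounded by \eqref{ANb}. The uniqueness of $\widehat\mu$ asserted in the additive-noise analog of Theorem \ref{Boundedth} (stated immediately before this proposition) then gives $\nu=\widehat\mu$, so $\mathcal L(X_{\gamma_k}(t))\to \mathcal L(X(t))$ in $Pr(H)$ for every $t\in\R$, which is what we needed.

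The main obstacle is establishing the additive-noise continuous dependence statement under (H2$''$) rather than (H2)+(HL); the decay term provided by strong monotonicity is of order $\|X_n-X\|_H^r$ with $r>2$ instead of the quadratic shape exploited in Theorem \ref{conlemma}, so one must verify that it can be harmlessly discarded and that the remaining terms still close the Gronwall loop. The deterministic character of $B$ substantially simplifies the stochastic integral estimates compared with the general multiplicative case, which is why a direct adaptation of the proof of Theorem \ref{conlemma} goes through.
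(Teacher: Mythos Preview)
Your proposal is correct and follows the same blueprint as the paper (which simply refers back to Proposition~\ref{CBprop} without giving details). One simplification worth noting: you do not need to re-establish a separate ``additive-noise analog'' of Theorem~\ref{conlemma}. In the additive-noise setting, (H2$''$) gives
\[
2_{V^{*}}\langle A(t,u)-A(t,v),u-v\rangle_{V}\leq -\lambda\|u-v\|_{H}^{r}\leq 0,
\]
so (H2) holds with $c=0$; and since $B$ is state-independent, (HL) holds trivially with $L_{B}=0$. Hence Theorem~\ref{conlemma} applies verbatim, and the proof of Proposition~\ref{CBprop} carries over unchanged with Theorem~\ref{Boundedth} replaced by its additive-noise counterpart stated immediately before this proposition. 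The ``main obstacle'' you identify is therefore not an obstacle at all.
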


\begin{theorem}\label{ANPth}
Consider equation \eqref{spdean}.
Suppose that {\rm{(H1)}}, {\rm{(H2$''$)}}, {\rm{(H3)}} and {\rm{(H4)}} hold. Assume further that
the mappings $A$ and $B$ are $T$-periodic in t.
Then the unique $L^{2}$-bounded solution
is $T$-periodic in distribution.

In particular, this unique $L^{2}$-bounded solution is stationary
provided the mappings $A$ and $B$ are independent of t.
\end{theorem}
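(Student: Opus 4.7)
The plan is to follow the strategy of Theorem \ref{Pth} almost verbatim, the only adjustments being that we invoke the uniqueness result for the additive-noise equation established just above together with the $L^2$-convergence \eqref{L2lim2}. Introduce the transition probability function
\[
p_{t,s}(x,\d y) := P\circ(X(t,s,x))^{-1}(\d y), \qquad s\leq t,\ x\in H,
\]
of equation \eqref{spdean}. The first step is to show that $p_{t,s}$ is $T$-periodic in the sense that $p_{t+T,s+T}(x,\d y) = p_{t,s}(x,\d y)$ whenever $A$ and $B$ are $T$-periodic in $t$.

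To do this, set $\widetilde{W}(\cdot) := W(\cdot + T) - W(T)$, which is again a cylindrical Wiener process on $U$ with respect to the shifted filtration. Substituting $\sigma \mapsto \sigma + T$ in both the Bochner and stochastic integrals and using the $T$-periodicity of $A$, $B$, one checks that $X(\cdot + T, s+T, x)$ solves equation \eqref{spdean} driven by $\widetilde{W}$ with initial value $x$ at time $s$. The uniqueness in law of solutions (guaranteed by (H2$''$) via the Yamada--Watanabe theorem, exactly as in the proof of Theorem \ref{Pth}) then gives equality of the laws, i.e.\ $p_{t+T,s+T}^{W}(x,\d y) = p_{t,s}^{W}(x,\d y)$.

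The second step is to transfer this identity to the $L^2$-bounded solution. By \eqref{L2lim2}, $X(t,-n,0)\to X(t)$ in $L^2(\Omega,P;H)$, and likewise $X(t+T,-n+T,0)\to X(t+T)$. The periodicity of the transition function yields $\L(X(t+T,-n+T,0)) = \L(X(t,-n,0))$ for every $n$; passing to the limit in $(Pr(H), d_{BL})$ via the $L^2$-convergence gives $\widehat{\mu}(t+T) = \widehat{\mu}(t)$ for all $t\in\R$. To upgrade to $T$-periodicity of $\L(X(\cdot))$ in $Pr(C(\R,H))$, one invokes the uniqueness in law of the $L^2$-bounded solution: the process $X(\cdot+T)$ is itself an $L^2$-bounded solution of equation \eqref{spdean} (driven by $\widetilde{W}$, with the same coefficients after periodicity), so its path law coincides with that of $X(\cdot)$.

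The stationary case is then immediate: when $A$ and $B$ are independent of $t$, the above argument shows that $\widehat{\mu}$ is $T$-periodic for every $T\in\R$, so $X$ is stationary in distribution. No genuinely new obstacle arises relative to Theorem \ref{Pth}; the only point that requires a moment of care is matching the shifted-Wiener-process argument with the additive-noise existence and uniqueness results just proved, so that the Yamada--Watanabe step and the $L^2$-limit in \eqref{L2lim2} are both legitimately available.
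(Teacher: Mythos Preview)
Your proposal is correct and follows essentially the same approach as the paper: the paper states that Theorem~\ref{ANPth} is obtained ``completely similar to'' Theorem~\ref{Pth}, and your argument reproduces that proof with the natural substitutions of \eqref{L2lim2} for \eqref{L2lim1} and the additive-noise uniqueness result for Theorem~\ref{Boundedth}.
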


\begin{theorem}\label{ANAPth}
Consider equation \eqref{spdean}.
Suppose that {\rm{(H1)}}, {\rm{(H2$''$)}}, {\rm{(H3)}}, {\rm{(H4)}} and {\rm{(H5)}} hold.
Assume further that the coefficients $A$ and $B$ are uniformly almost periodic (uniformly almost automorphic).
Then the unique $L^{2}$-bounded solution is almost periodic in distribution
(almost automorphic in distribution).
\end{theorem}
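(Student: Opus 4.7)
The plan is to reduce Theorem \ref{ANAPth} to the two ingredients already in hand: the continuous dependence result Proposition \ref{ANCBprop} and a tightness argument identical to Proposition \ref{tightprop}, then repeat verbatim the Bochner-style double-sequence argument used in the proofs of Theorems \ref{APth} and \ref{AAth}.

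\emph{Step 1 (tightness).} First I would establish the additive-noise analogue of Proposition \ref{tightprop}: under (H1), (H2$''$), (H3), (H4), (H5), the unique $L^{2}$-bounded solution $X(\cdot)$ of \eqref{spdean} satisfies $\sup_{t\in\R}E\|X(t)\|_{S}^{2}<\infty$, and therefore $\{P\circ[X(t)]^{-1}\}_{t\in\R}$ is tight in $Pr(H)$. The proof is unchanged: apply $I_{n}$ to the integrated form, invoke It\^o on the auxiliary Gelfand triple $V\subset H_{n}\subset V^{*}$, use the key inequality in (H5) together with the product rule to obtain $E\|X(t,-m,0)\|_{n}^{2}\le M_{0}/C$, pass to a weak limit in $L^{2}(\Omega,P;H_{n})$, and conclude by Fatou with $\|\cdot\|_{n}\uparrow\|\cdot\|_{S}$. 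Compactness of $S\hookrightarrow H$ gives tightness.

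\emph{Step 2 (almost periodic case).} Given sequences $\gamma'=\{\gamma_{n}'\}$ and $\beta'=\{\beta_{n}'\}$ in $\R$, use the uniform almost periodicity of $A$ and $B$ (and the corresponding Bochner criterion) to extract common-index subsequences $\gamma\subset\gamma'$, $\beta\subset\beta'$ along which $T_{\beta}A$, $T_{\beta}B$, $T_{\gamma}T_{\beta}A$, $T_{\gamma}T_{\beta}B$, $T_{\gamma+\beta}A$, $T_{\gamma+\beta}B$ all exist uniformly on $\R\times Q$ for every compact $Q\subset V$, with $T_{\gamma}T_{\beta}A=T_{\gamma+\beta}A$ and $T_{\gamma}T_{\beta}B=T_{\gamma+\beta}B$. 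Because (H1)--(H5) are pointwise inequalities with fixed constants, they are preserved under these uniform limits. Let $Y$, $Z_{1}$, $Z_{2}$ denote the unique $L^{2}$-bounded solutions of \eqref{spdean} with coefficients $(T_{\beta}A,T_{\beta}B)$, $(T_{\gamma}T_{\beta}A,T_{\gamma}T_{\beta}B)$, $(T_{\gamma+\beta}A,T_{\gamma+\beta}B)$ respectively. The tightness from Step 1 applies to each of the shifted equations (with the same $S$ and constants), so Proposition \ref{ANCBprop} together with uniqueness in law of solutions yields
\begin{align*}
\lim_{m\to\infty}d_{BL}(\L(X(\cdot+\beta_{m})),\L(Y)) &=0,\\
\lim_{n\to\infty}d_{BL}(\L(Y(\cdot+\gamma_{n})),\L(Z_{1})) &=0,\\
\lim_{n\to\infty}d_{BL}(\L(X(\cdot+\gamma_{n}+\beta_{n})),\L(Z_{2})) &=0
\end{align*}
all in $Pr(C(\R,H))$. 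Uniqueness of the $L^{2}$-bounded solution forces $Z_{1}=Z_{2}$ in law, hence $T_{\gamma}T_{\beta}\widehat{\mu}=T_{\gamma+\beta}\widehat{\mu}$ in $Pr(C(\R,H))$, which is exactly Bochner's criterion for almost periodicity in distribution, as in Theorem \ref{APth}.

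\emph{Step 3 (almost automorphic case).} For any $\beta'\subset\R$, extract $\beta\subset\beta'$ such that $A(t+\beta_{n},x)\to\widetilde{A}(t,x)$ and $B(t+\beta_{n})\to\widetilde{B}(t)$ uniformly on $[a,b]\times Q$ for arbitrary compact $[a,b]$ and $Q\subset V$, and symmetrically $\widetilde{A}(t-\beta_{n},x)\to A(t,x)$, $\widetilde{B}(t-\beta_{n})\to B(t)$. Again (H1)--(H5) transfer to $(\widetilde{A},\widetilde{B})$ with the same constants. Letting $Y$ be the $L^{2}$-bounded solution for $(\widetilde{A},\widetilde{B})$, Proposition \ref{ANCBprop} and uniqueness in law give $d_{BL}(\L(X(\cdot+\beta_{n})),\L(Y))\to 0$ and $d_{BL}(\L(Y(\cdot-\beta_{n})),\L(X))\to 0$ in $Pr(C(\R,H))$, which is precisely Definition \ref{Bochaa} applied to $t\mapsto\L(X(t+\cdot))$.

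The only non-routine point is Step 1: one must confirm that condition (H5) --- which is the essential hypothesis producing tightness --- is itself preserved under uniform almost periodic or almost automorphic limits of the coefficients. This is immediate because the inequality in (H5) involves fixed constants $C$ and $M_{0}$ independent of $t$, and pointwise limits of inequalities preserve them. Everything else is a mechanical transcription of the multiplicative-noise proofs with Proposition \ref{CBprop} replaced by Proposition \ref{ANCBprop}.
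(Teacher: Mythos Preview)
Your proposal is correct and follows essentially the same approach as the paper, which in fact gives no separate proof for Theorem \ref{ANAPth} at all, merely stating that it (together with Proposition \ref{ANCBprop} and Theorem \ref{ANPth}) is obtained ``completely similar to Proposition \ref{CBprop}, Theorems \ref{Pth}, \ref{APth} and \ref{AAth}''. You have spelled out more than the paper does, in particular the need for an additive-noise version of Proposition \ref{tightprop} and the observation that (H5) survives passage to the limit of shifted coefficients; both points are implicit in the paper but not stated.
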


\section{Stability of the bounded solution}
In this section, we prove that the $L^{2}$-bounded solutions of equations \eqref{eqSPDE1}
and \eqref{spdean} are globally asymptotically stable.

\begin{definition}[See Fu and Liu \cite{FL}]\rm
We say that a solution $X(\cdot)$ of equation \eqref{eqSPDE1} or \eqref{spdean} is {\em stable
in square-mean sense}, if for each $\epsilon>0$, there exists $\delta>0$ such that for all $t\geq0$
\begin{equation*}
  E\|X(t,0,\zeta_{0})-X(t)\|_{H}^{2}<\epsilon,
\end{equation*}
whenever $E\|\zeta_{0}-X(0)\|_{H}^{2}<\delta$. The solution $X(\cdot)$ is said to be {\em asymptotically
stable in square-mean sense} if it is stable in square-mean sense and
\begin{equation}\label{stab}
  \lim_{t\rightarrow\infty}E\|X(t,0,\zeta_{0})-X(t)\|_{H}^{2}=0.
\end{equation}
We say $X(\cdot)$ is {\em globally asymptotically stable in square-mean sense} provided  \eqref{stab} holds
for any $\zeta_{0}\in L^{2}(\Omega,\mathcal{F}_{0},P;H)$.
\end{definition}

\begin{theorem}\label{gasms}
Consider equation \eqref{eqSPDE1}.
Suppose that {\rm{(H1)}}, {\rm{(H2$'$)}}, {\rm{(H3)}} and {\rm{(H4)}} hold,
then the unique $L^{2}$-bounded solution of equation \eqref{eqSPDE1} is
globally asymptotically stable in square-mean sense. Moreover, for any $t\geq s$ and
$\zeta_{s}\in L^{2}(\Omega,\mathcal{F}_{s},P;H)$, we have
\begin{equation*}
  E\|X(t,s,\zeta_{s})-X(t)\|_{H}^{2}\leq {\rm{e}}^{-\lambda(t-s)}E\|\zeta_{s}-X(s)\|_{H}^{2}.
\end{equation*}
\end{theorem}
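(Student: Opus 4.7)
The plan is to apply It\^o's formula to the squared difference of two solutions and exploit the strict monotonicity condition (H2$'$) to obtain an exponentially decaying bound; the desired global asymptotic stability then follows immediately from this bound.

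More precisely, fix $s\in\R$ and $\zeta_s\in L^2(\Omega,\mathcal F_s,P;H)$, and set $Y(t):=X(t,s,\zeta_s)-X(t)$ for $t\ge s$. Then $Y$ satisfies
\begin{equation*}
Y(t)=Y(s)+\int_s^t\bigl[A(\sigma,\overline X(\sigma,s,\zeta_s))-A(\sigma,\overline X(\sigma))\bigr]\d\sigma
+\int_s^t\bigl[B(\sigma,\overline X(\sigma,s,\zeta_s))-B(\sigma,\overline X(\sigma))\bigr]\d W(\sigma).
\end{equation*}
I would first invoke the It\^o formula from Remark \ref{estB}(iv) to compute $\d\|Y(t)\|_H^2$, and then apply the product rule to the process $e^{\lambda(t-s)}\|Y(t)\|_H^2$, obtaining
\begin{align*}
e^{\lambda(t-s)}\|Y(t)\|_H^2
&=\|Y(s)\|_H^2+\int_s^t\lambda e^{\lambda(\sigma-s)}\|Y(\sigma)\|_H^2\d\sigma\\
&\quad+\int_s^t e^{\lambda(\sigma-s)}\Bigl(2{}_{V^*}\langle A(\sigma,\overline X(\sigma,s,\zeta_s))-A(\sigma,\overline X(\sigma)),\overline Y(\sigma)\rangle_V\\
&\qquad+\|B(\sigma,\overline X(\sigma,s,\zeta_s))-B(\sigma,\overline X(\sigma))\|_{L_2(U,H)}^2\Bigr)\d\sigma+M(t),
\end{align*}
where $M(t)$ is the stochastic integral (a local martingale in $t$).

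Next I would apply the strict monotonicity hypothesis (H2$'$) pointwise in $\sigma$: the bracketed drift plus diffusion quadratic term is bounded above by $-\lambda\|\overline Y(\sigma)\|_H^2$, which exactly cancels the $+\lambda e^{\lambda(\sigma-s)}\|Y(\sigma)\|_H^2$ contribution coming from the product rule (recall $\overline Y=Y$ $\d t\otimes P$-a.e.). Localizing the stochastic integral by a sequence of stopping times and taking expectations (then passing to the limit via Fatou's lemma or dominated convergence, using the $L^2$-integrability from \eqref{ineq2} and Lemma \ref{Bestimate} applied to both $X(\cdot,s,\zeta_s)$ and the bounded solution $X(\cdot)$), one obtains
\begin{equation*}
E\bigl[e^{\lambda(t-s)}\|Y(t)\|_H^2\bigr]\le E\|Y(s)\|_H^2,
\end{equation*}
which rearranges to the stated estimate $E\|X(t,s,\zeta_s)-X(t)\|_H^2\le e^{-\lambda(t-s)}E\|\zeta_s-X(s)\|_H^2$. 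Stability in square-mean sense is immediate by choosing $\delta=\varepsilon$, and letting $t\to\infty$ with $s=0$ yields global asymptotic stability for any $\zeta_0\in L^2(\Omega,\mathcal F_0,P;H)$.

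The only technical point that requires care is the martingale-versus-local-martingale issue: a priori the stochastic integral $M(t)$ is only a local martingale, and one must justify that its expectation vanishes. This is handled by a standard stopping-time truncation $\tau_N:=\inf\{t\ge s:\|Y(t)\|_H\ge N\}$, applying the argument on $[s,t\wedge\tau_N]$, and then sending $N\to\infty$; the a priori $L^2$-bound on $Y$ inherited from the $L^2$-boundedness of both $X(\cdot,s,\zeta_s)$ (via \eqref{ineq2}) and $X(\cdot)$ ensures the limiting procedure is valid. No further obstacle arises, since (H2$'$) provides exactly the dissipativity needed to absorb both the drift and the noise-induced quadratic variation simultaneously.
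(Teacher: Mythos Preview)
Your proposal is correct and follows essentially the same approach as the paper: apply It\^o's formula and the product rule to $e^{\lambda(t-s)}\|X(t,s,\zeta_s)-X(t)\|_H^2$, use (H2$'$) to cancel the $\lambda$-term, and conclude the exponential estimate. You are in fact slightly more careful than the paper, which takes expectations directly without commenting on the localization of the stochastic integral.
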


\begin{proof}
In view of It\^o's formula, the product rule and (H2$'$), we have
\begin{align*}
  & E\left({\rm{e}}^{\lambda(t-s)}\|X(t,s,\zeta_{s})-X(t)\|_{H}^{2}\right)\\
  & =E\|\zeta_{s}-X(s)\|_{H}^{2}+\int_{s}^{t}\lambda {\rm{e}}^{\lambda(\sigma-s)}E\|X(\sigma,s,\zeta_{s})-X(\sigma)\|_{H}^{2}\d\sigma \\
  & \quad +E\int_{s}^{t}{\rm{e}}^{\lambda(\sigma-s)}\Big(2_{V^{*}}\langle A(\sigma,\overline{X}(\sigma,s,\zeta_{s}))
    -A(\sigma,\overline{X}(\sigma)),\overline{X}(\sigma,s,\zeta_{s})-\overline{X}(\sigma)\rangle_{V}\\
  & \qquad +\|B(\sigma,\overline{X}(\sigma,s,\zeta_{s}))-B(\sigma,\overline{X}(\sigma))\|_{L_{2}(U,H)}^{2}\Big)\d\sigma\\
  & \leq E\|\zeta_{s}-X(s)\|_{H}^{2}.
\end{align*}
It follows that
\begin{equation*}
  E\|X(t,s,\zeta_{s})-X(t)\|_{H}^{2}\leq {\rm{e}}^{-\lambda(t-s)}E\|\zeta_{s}-X(s)\|_{H}^{2},\quad \rm{for~all}~t\geq s.
\end{equation*}
The proof is complete.
\end{proof}

Applying Lemma \ref{adcont} we obtain the following result:
\begin{theorem}
Consider equation \eqref{spdean}.
Suppose that {\rm{(H1)}}, {\rm{(H2$''$)}}, {\rm{(H3)}} and {\rm{(H4)}} hold,
then the unique $L^{2}$-bounded solution of equation \eqref{spdean} is globally asymptotically stable
in square-mean sense. Moreover, for any $t\geq s$ and
$\zeta_{s}\in L^{2}(\Omega,\mathcal{F}_{s},P;H)$, we have
\begin{equation*}
  E\|X(t,s,\zeta_{s})-X(t)\|_{H}^{2}\leq E\|\zeta_{s}-X(s)\|_{H}^{2}\wedge
  \left\{\frac{\lambda}{2}(r-2)(t-s)\right\}^{-\frac{2}{r-2}}.
\end{equation*}
\end{theorem}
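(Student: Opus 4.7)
The plan is to reduce everything to the two-solution contraction estimate \eqref{ANSCP} of Lemma \ref{adcont}, which already contains all the analytic content. Let $X(\cdot)$ denote the unique $L^{2}$-bounded solution to equation \eqref{spdean}; by pathwise uniqueness on $[s,\infty)$ we have $X(t) = X(t,s,X(s))$ for every $t \geq s$. First, I would apply \eqref{ANSCP} to the pair of solutions $X(\cdot,s,\zeta_{s})$ and $X(\cdot) = X(\cdot,s,X(s))$, specializing the two initial times $u$ and $\sigma$ in Lemma \ref{adcont} both to $s$. This substitution directly yields
\begin{equation*}
E\|X(t,s,\zeta_{s}) - X(t)\|_{H}^{2} \leq E\|\zeta_{s} - X(s)\|_{H}^{2} \wedge \left\{\tfrac{\lambda}{2}(r-2)(t-s)\right\}^{-\frac{2}{r-2}},
\end{equation*}
which is exactly the quantitative estimate asserted in the ``Moreover'' clause of the theorem.

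From this single inequality the two stability assertions follow almost mechanically. For the asymptotic convergence \eqref{stab}, specializing to $s=0$ and using that $r>2$, the term $\{\tfrac{\lambda}{2}(r-2)t\}^{-\frac{2}{r-2}}$ decreases to $0$ as $t\to\infty$, so the minimum forces $E\|X(t,0,\zeta_{0}) - X(t)\|_{H}^{2} \to 0$ for every $\zeta_{0} \in L^{2}(\Omega,\mathcal{F}_{0},P;H)$, which is global asymptotic convergence. For the $\epsilon$-$\delta$ part of stability in square-mean sense, I would simply choose $\delta := \epsilon$; the other branch of the minimum then gives
\begin{equation*}
E\|X(t,0,\zeta_{0}) - X(t)\|_{H}^{2} \leq E\|\zeta_{0}-X(0)\|_{H}^{2} < \epsilon
\end{equation*}
uniformly in $t \geq 0$ whenever $E\|\zeta_{0}-X(0)\|_{H}^{2} < \delta$.

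Since the conclusion is essentially a specialization of Lemma \ref{adcont} together with uniqueness, there is no real obstacle in this step: the heavy lifting (the strong-monotone bound that produces the $\{\cdots\}^{-2/(r-2)}$ decay without any monotonicity assumption on the noise) has already been performed in the proof of \eqref{ANSCP}. The only tacit point I would make sure to state explicitly is that the $L^{2}$-bounded solution $X(\cdot)$ obeys the Cauchy flow identity $X(t) = X(t,s,X(s))$ for $t\geq s$, so that Lemma \ref{adcont} applies with $Y = X$; this is immediate from the uniqueness of solutions established earlier in Section 5.
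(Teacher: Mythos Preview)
Your proposal is correct and follows exactly the paper's approach: the paper proves this theorem simply by invoking Lemma \ref{adcont} (the estimate \eqref{ANSCP}), and you have spelled out precisely how that specialization works together with the flow identity $X(t)=X(t,s,X(s))$. The additional remarks you give about deriving stability and asymptotic stability from the two branches of the minimum are accurate and make explicit what the paper leaves implicit.
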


\section{Applications}
In this section, we illustrate our theoretical results by two examples. For simplicity,
we mainly consider the additive type noise in these examples.

\subsection{Stochastic reaction diffusion equations.}
Let $\Lambda$ be an open bounded subset of $\mathbb{R}^{n}$, $n\in\mathbb{N}$. Consider the equation
\begin{equation}\label{ex1}
 \d u=\left(\Delta u-au|u|^{p-2}+\phi(t)u\right)\d t+B(t)\d W(t),
\end{equation}
where $W(\cdot)$ is a two-sided cylindrical $Q$-Wiener process with $Q=I$ on a separable Hilbert space
$(U,\langle~,~\rangle_{U})$ and $p\in[2,\infty)$. Here $a>0$ is a constant and
$\phi(\cdot)$ is  bounded, i.e. there exists a constant $C_{1}>0$ such that
$|\phi(t)|\leq C_{1}$ for all $t\in\R$.
We define $V_{1}:=H_{0}^{1,2}(\Lambda)$, $V_{2}:=L^{p}(\Lambda)$, $H:=L^{2}(\Lambda)$,
$V_{1}^{*}:=(H_{0}^{1,2}(\Lambda))^{*}$, $V_{2}^{*}:=(L^{p}(\Lambda))^{*}$,
$V:=V_{1}\cap V_{2}$ and
$$A_{1}(t,u):=\Delta u+\phi(t)u ,\quad A_{2}(u):=-au|u|^{p-2}.$$

\begin{theorem}\label{app1}
Let $\lambda_{*}$ be the first eigenvalue of $-\Delta$ with the Dirichlet boundary condition and
assume that $\lambda_{*}-C_1>0$.

{\rm{(1)}} If $\|B(t)\|^{2}_{L_{2}(U,H)}\leq M$ for some constant $M>0$, then there exists a unique $L^{2}$-bounded solution $X(\cdot)$ to
equation \eqref{ex1}, which is globally asymptotically stable in square-mean sense. Furthermore, $X(\cdot)$ is $T$-periodic
in distribution (stationary) if $B$ and $\phi$ are $T$-periodic (independent of $t$).

{\rm{(2)}} Let $S=H_{0}^{1,2}(\Lambda)$. Suppose that there exists a constant $\hat M>0$
such that $\|B(t)\|^{2}_{L_{2}(U,S)}\leq \hat M$. Then the $L^{2}$-bounded solution $X(\cdot)$
is almost periodic (almost automorphic) in distribution if $B$ and $\phi$ are almost periodic (almost automorphic).
\end{theorem}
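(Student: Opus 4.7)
The plan is to verify the abstract hypotheses of Sections~3--6 for equation~\eqref{ex1} and then invoke Theorems~\ref{ANPth}, \ref{ANAPth} and the stability result from Section~6. Write $A = A_1 + A_2$ with $A_1(t,u) = \Delta u + \phi(t)u$ acting from $V_1 = H_0^{1,2}(\Lambda)$ to $V_1^*$, and $A_2(u) = -au|u|^{p-2}$ acting from $V_2 = L^p(\Lambda)$ to $V_2^*$, with $H = L^2(\Lambda)$ and $V = V_1 \cap V_2$. I would first check (H1), (H2$'$), (H3), (H4). Hemicontinuity is immediate. For strict monotonicity, integration by parts and Poincar\'e's inequality yield
\[
2\,{}_{V_1^*}\langle A_1(t,u) - A_1(t,v), u-v\rangle_{V_1} = -2\|\nabla(u-v)\|_H^2 + 2\phi(t)\|u-v\|_H^2 \le -2(\lambda_* - C_1)\|u-v\|_H^2,
\]
while the monotonicity of $r \mapsto r|r|^{p-2}$ gives ${}_{V_2^*}\langle A_2(u) - A_2(v), u-v\rangle_{V_2} \le 0$; hence (H2$'$) holds with $\lambda = 2(\lambda_* - C_1) > 0$. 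Coercivity (H3) follows from $2\langle \Delta u, u\rangle_H = -2\|\nabla u\|_H^2$ (controlling $\|u\|_{V_1}^{\alpha_1}$ with $\alpha_1 = 2$) and $2\langle -au|u|^{p-2}, u\rangle_H = -2a\|u\|_{L^p}^p$ (so $\alpha_2 = p$), together with $|\phi| \le C_1$ and $\|B(t)\|_{L_2(U,H)}^2 \le M$; boundedness (H4) is elementary. Thus part~(1) is immediate: Theorem~\ref{ANPth} produces the unique $L^2$-bounded solution $X(\cdot)$ and its $T$-periodicity (respectively stationarity) in distribution when $B$ and $\phi$ share the corresponding recurrence, while the stability result for equation~\eqref{spdean} in Section~6 gives global asymptotic stability in square-mean sense.

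For part~(2), the remaining task is to verify (H5) with $S = H_0^{1,2}(\Lambda)$, after which Theorem~\ref{ANAPth} concludes. Let $\{e_i\}$ be the $L^2$-orthonormal Dirichlet eigenbasis of $-\Delta$, with eigenvalues $0 < \lambda_* = \lambda_1 \le \lambda_2 \le \cdots \uparrow \infty$, and set $T_n x := \sum_i \min(\lambda_i, n)\,\langle x, e_i\rangle_H\, e_i$. Then $T_n$ is bounded and positive self-adjoint on $H$; the induced norm $\|x\|_n^2 = \sum_i \min(\lambda_i, n)|\langle x, e_i\rangle_H|^2$ is equivalent to $\|\cdot\|_H$ for each fixed $n$ (with constants $\min(\lambda_1, n)$ and $n$) and monotonically increases to $\|\nabla x\|_H^2 = \|x\|_S^2$ on $S$. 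Writing $T_n = -\Delta P_n + n(I - P_n)$, with $P_n$ the finite-rank spectral projection onto $\{e_i : \lambda_i \le n\}$, one sees that $T_n$ maps $V$ into $V$ continuously. The $S$-boundedness of $B$ transfers to $\|B(t)\|_{L_2(U, H_n)}^2 \le \|B(t)\|_{L_2(U, S)}^2 \le \hat M$, and the linear contribution is controlled by the spectral representation, $2\langle \Delta v + \phi(t) v, T_n v\rangle_H \le -2(\lambda_* - C_1)\|v\|_n^2$.

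The main obstacle is the nonlinear term $-2a\langle v|v|^{p-2}, T_n v\rangle_H$ when $p > 2$. Using the decomposition above, it splits as
\[
-2an\|v\|_{L^p}^p + 2an\langle v|v|^{p-2}, P_n v\rangle_H + 2a\langle v|v|^{p-2}, \Delta P_n v\rangle_H;
\]
the first piece is strongly dissipative, while the other two are estimated via H\"older's and Young's inequalities using that $P_n v$ lies in a fixed finite-dimensional space of smooth eigenfunctions so that all its relevant Sobolev norms are comparable to $\|v\|_H$; they can then be absorbed into the leading dissipation, along the lines of Liu~\cite{LiuW101} and~\cite{GLR}. This delivers
\[
2\,{}_{V^*}\langle A(t,v), T_n v\rangle_V + \|B(t)\|_{L_2(U, H_n)}^2 \le -C\|v\|_n^2 + M_0,
\]
verifying (H5). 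Theorem~\ref{ANAPth} then gives the almost periodic (respectively almost automorphic) character in distribution of $X(\cdot)$, completing the proof.
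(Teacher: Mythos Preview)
Your verification of (H1)--(H4) for part~(1) is correct and matches the paper's argument. One bookkeeping point: you invoke Theorems~\ref{ANPth} and~\ref{ANAPth} from Section~5, but that section assumes the single-triple setting $V_1=V_2$, whereas here $V_1=H_0^{1,2}(\Lambda)\neq L^p(\Lambda)=V_2$. You should cite Theorems~\ref{Boundedth}, \ref{Pth}, \ref{APth}, \ref{AAth} and~\ref{gasms} instead (with {\rm (HL)} trivially satisfied since the noise is additive). This is cosmetic.

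The real gap is in part~(2). Your spectral truncation $T_n=\sum_i\min(\lambda_i,n)\langle\cdot,e_i\rangle_H e_i=-\Delta P_n+n(I-P_n)$ does not deliver an $n$-independent estimate for the nonlinear term, and (H5) requires exactly that: the constants $C,M_0$ must be uniform in $n$, since Proposition~\ref{tightprop} passes to the limit $n\to\infty$ to obtain $\sup_t E\|X(t)\|_S^2\le M_0/C$. Your absorption argument bounds $\|P_nv\|_{L^p}$ and $\|\Delta P_nv\|_{L^p}$ by norm equivalence on the finite-dimensional range of $P_n$, but those equivalence constants blow up with~$n$. In particular the term $2an\langle v|v|^{p-2},P_nv\rangle_H$ carries an explicit factor $n$ and can be absorbed into $-2an\|v\|_{L^p}^p$ only if $P_n$ is an $L^p$-contraction, which spectral projections of the Dirichlet Laplacian are not for $p\neq 2$.

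The paper takes a different $T_n$: the Yosida approximation $T_n=-\Delta\bigl(I-\tfrac{\Delta}{n}\bigr)^{-1}=n\bigl(I-(I-\tfrac{\Delta}{n})^{-1}\bigr)$. The point is the resolvent representation $(I-\tfrac{\Delta}{n})^{-1}u=\int_0^\infty e^{-t}P_{t/n}u\,dt$ combined with the contractivity of the heat semigroup $\{P_t\}$ on $L^p(\Lambda)$. This gives, without any absorption and with no $n$-dependence,
\[
{}_{V_2^*}\langle A_2(v),T_nv\rangle_{V_2}
=-an\int_0^\infty e^{-t}\Bigl(\|v\|_{L^p}^p-\int_\Lambda v|v|^{p-2}\,P_{t/n}v\,d\xi\Bigr)dt\le 0,
\]
so the nonlinear contribution has the correct sign outright. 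The $L^p$-contractivity of the heat semigroup (hence of the resolvent) is the key structural fact your argument is missing; once you switch to the Yosida $T_n$, the rest of your outline for (H5) goes through.
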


\begin{proof}
(1) In order to prove (1), by Theorems \ref{Boundedth}, \ref{gasms} and \ref{Pth},
it suffices to show that $A$ and $B$ satisfy (H1), (H2$'$), (H3) and (H4).

(H1) $A_{1}$ is obviously hemicontinuous.
We now prove that $A_{2}$ is hemicontinuous.
Let $u$, $v$, $w\in V$. For $\theta\in\mathbb{R}$,
without loss of generality, we assume $|\theta|\leq1$, then we have
\begin{align}\label{appien1}
   & ~_{V^{*}}\langle A_{2}(u+\theta v)-A_{2}(u),w\rangle_{V} \\\nonumber
   & =\int_{\Lambda}\left(-\left(u(\xi)+\theta v(\xi)\right)|u(\xi)+\theta v(\xi)|^{p-2}w(\xi)
     +u(\xi)|u(\xi)|^{p-2}w(\xi)\right)\d\xi\\\nonumber
   & \leq\int_{\Lambda}\left(2^{p-2}\left(|u(\xi)|^{p-1}+|v(\xi)|^{p-1}\right)|w(\xi)|
     +|u(\xi)|^{p-1}|w(\xi)|\right)\d\xi<\infty.
\end{align}
The last inequality holds since $u$, $v$, $w\in L^{p}(\Lambda)$. Then
$~_{V^{*}}\langle A_{2}(u+\theta v)-A_{2}(u),w\rangle_{V}$ converges to zero
as $\theta\rightarrow0$ by Lebesgue's dominated convergence theorem. So, (H1) holds.

(H2$'$) If $u,v\in V$ then there exist $u_{n},~v_{n}\in C_{c}^{\infty}(\Lambda)$, $n\in \mathbb{N}$
such that $u_{n}\rightarrow u$, $v_{n}\rightarrow v$ as $n\rightarrow \infty$ in $V_{1}$.
Hence we get
\begin{align*}
   & ~_{V_{1}^{*}}\langle A_{1}(t,u)-A_{1}(t,v),u-v\rangle_{V_{1}} \\
   & =\lim_{n\rightarrow \infty}~_{V_{1}^{*}}\langle\Delta u_{n}-\Delta v_{n},u_{n}-v_{n}\rangle_{V_{1}}
      +\phi(t)\langle u-v,u-v\rangle_{H}\\
   & \leq-\lambda_{*}\lim_{n\rightarrow \infty}\langle u_{n}- v_{n},u_{n}-v_{n}\rangle_{H}
     +\phi(t)\|u-v\|_{H}^{2}\\
   & \leq-(\lambda_{*}-C_1)\|u-v\|_{H}^{2}
\end{align*}
and
\begin{equation*}
  ~_{V_{2}^{*}}\langle A_{2}(u)-A_{2}(v),u-v\rangle_{V_{2}}=-a\int_{\Lambda}
  \left(u(\xi)|u(\xi)|^{p-2}-v(\xi)|v(\xi)|^{p-2}\right)\left(u(\xi)-v(\xi)\right)\d\xi\leq0.
\end{equation*}
Then
\begin{equation*}
  2_{V^{*}}\langle A(t,u)-A(t,v),u-v\rangle_{V}+\|B(t)-B(t)\|^{2}_{L_{2}(U,H)}
  \leq-2(\lambda_{*}-C_1)\|u-v\|^{2}_{H}.
\end{equation*}
So (H2$'$) holds with $\lambda=2(\lambda_{*}-C_1)>0$.

(H3) For all $v\in V$, $t\in\mathbb{R}$ we have
\begin{align*}
  ~_{V_{1}^{*}}\langle A_{1}(t,v),v\rangle_{V_{1}} & = \lim_{n\rightarrow \infty}
     ~_{V_{1}^{*}}\langle\Delta v_{n},v_{n}\rangle_{V_{1}}+\phi(t)\langle v,v\rangle_{H}\\
   & =-\int_{\Lambda}|\nabla v(\xi)|^{2}\d\xi+\phi(t)\|v\|_{H}^{2}\\
   & =\|v\|_{H}^{2}-\|v\|^{2}_{1,2}+\phi(t)\|v\|^{2}_{H}\\
   & \leq\left(C_1+1\right)\|v\|^{2}_{H}-\|v\|^{2}_{V_{1}}
\end{align*}
and
\begin{equation*}
 ~_{V_{2}^{*}}\langle A_{2}(v),v\rangle_{V_{2}}= -a\int_{\Lambda}|v(\xi)|^{p}\d\xi =-a\|v\|^{p}_{V_{2}}.
\end{equation*}
Then
\begin{equation*}
  2_{V^{*}}\langle A(t,v),v\rangle_{V}+\|B(t)\|_{L_{2}(U,H)}^{2}
    \leq2\left(C_1+1\right)\|v\|_{H}^{2}-2\|v\|_{V_{1}}^{2}-2a\|v\|_{V_{2}}^{p}+M.
\end{equation*}
So (H3) holds with $\alpha_{1}=2$, $\alpha_{2}=p$.

(H4) For all $u$, $v\in V$, $t\in\R$ we have
\begin{align*}
  \left|_{V_{1}^{*}}\langle A_{1}(t,u),v\rangle_{V_{1}}\right| & =\left|\lim_{n\rightarrow \infty}
     ~_{V_{1}^{*}}\langle\Delta u_{n},v_{n}\rangle_{V_{1}}+\phi(t)\langle u,v\rangle_{H}\right| \\
   & \leq\|\nabla u\|_{H}\|\nabla v\|_{H}+C\|u\|_{H}\|v\|_{H}\\
   & \leq\left(C_1+1\right)\|u\|_{V_{1}}\|v\|_{V_{1}}
\end{align*}
and
\begin{equation*}
   \left|_{V_{2}^{*}}\langle A_{2}(u),v\rangle_{V_{2}}\right|  =\left|a\int_{\Lambda}-u(\xi)|u(\xi)|^{p-2}v(\xi)\d\xi\right|
    \leq a\|u\|_{V_{2}}^{p-1}\|v\|_{V_{2}}.
\end{equation*}
Therefore, we get
\[
\|A_{1}(t,u)\|_{V_{1}^{*}}\leq\left(C_1+1\right)\|u\|_{V_{1}},\quad \|A_{2}(u)\|_{V_{2}^{*}}
\leq a\|u\|^{p-1}_{V_{2}}.
\]
So (H4) holds with $\alpha_{1}=2$, $\alpha_{2}=p$.

(2) Note that  $\|B(t)\|^{2}_{L_{2}(U,H)}\leq\|B(t)\|^{2}_{L_{2}(U,S)}$ and
$\|B(t)\|^{2}_{L_{2}(U,H_{n})}\leq\|B(t)\|^{2}_{L_{2}(U,S)}$ for all $t\in\R$ (see, e.g. \cite[Remark B.0.6]{PR}).
So, in order to prove the almost periodic (almost automorphic) property of the $L^2$-bounded solution, by Theorems
\ref{APth} and \ref{AAth}, it suffices to show that (H5) holds. To this end,
we define $T_{n}=-\Delta\left(I-\frac{\Delta}{n}\right)^{-1}=n(I-(I-\frac{\Delta}{n})^{-1})$
which is the Yosida approximation of $\Delta$.
Note that $T_{n}$ are continuous on $W_{0}^{1,2}(\Lambda)$. Since the heat semigroup
$\{P_{t}\}_{t\geq0}$ (generated by $\Delta$) is contractive on $L^{p}(\Lambda)$,
$p>1$ (see Theorem 3.6 on page 215 of \cite{Pazy}) and
$(I-\frac{\Delta}{n})^{-1}u=\int_{0}^{\infty}{\rm{e}}^{-t}P_{\frac{t}{n}}u\d t$,
$T_{n}$ are continuous on $L^{p}(\Lambda)$.

For all $u\in V$, $t\in\R$ we have
\begin{align*}
  ~_{V_{1}^{*}}\langle\Delta u,T_{n}u\rangle_{V_{1}} & = \lim_{m\rightarrow \infty}
     ~_{V_{1}^{*}}\langle\Delta u_{m},T_{n}u_{m}\rangle_{V_{1}}\\
   & =\lim_{m\rightarrow \infty}\langle\sum_{j=1}^{\infty}
     \langle\Delta u_{m},e_{j}\rangle_{H}e_{j},T_{n}u_{m}\rangle_{H}\\
   & =\lim_{m\rightarrow \infty}\langle\sum_{j=1}^{\infty}
     -\lambda_{j}\langle u_{m},e_{j}\rangle_{H}e_{j},T_{n}u_{m}\rangle_{H}\\
   & \leq-\lambda_{*}\lim_{m\rightarrow \infty}\langle u_{m},T_{n}u_{m}\rangle_{H}\\
   & \leq-\lambda_{*}\|u\|_{n}^{2}
\end{align*}
and
\[
\phi(t)\langle u,T_{n}u\rangle_{H}=\phi(t)\|u\|_{n}^{2}\leq C_1\|u\|_{n}^{2}.
\]
In view of the contractivity of $\{P_{t}\}_{t\geq0}$ on $L^{p}(\Lambda)$, we have
\begin{equation*}
  _{V_{2}^{*}}\langle A_{2}(u),T_{n}u\rangle_{V_{2}}
  =n\int_{0}^{\infty}{\rm{e}}^{-t}\left(\int_{\Lambda}-au(\xi)|u(\xi)|^{p-2}\left(u(\xi)
  -P_{\frac{t}{n}}u(\xi)\right)d\xi\right)\d t\leq0.
\end{equation*}
Then we obtain
\begin{equation*}
  2_{V^{*}}\langle A(t,u),T_{n}u\rangle_{V}+\|B(t)\|^{2}_{L_{2}(U,H_{n})}\leq
  -2\left(\lambda_{*}-C_1\right)\|u\|_{n}^{2}+\hat M.
\end{equation*}
That is, (H5) holds.

The proof is complete.
\end{proof}

\begin{remark}\rm
\begin{enumerate}
  \item In the above stochastic reaction diffusion equation, we can also consider multiplicative noise case.
Here we give a simple example; see e.g. \cite{LiuW101}. Consider
\begin{equation}\label{lmneq}
  B(t,v)u:=B_{0}(t)u+\sum_{i=1}^{N}\phi_{i}(t)\langle u,u_{i}\rangle_{U}v,~u\in U, ~v\in V
\end{equation}
where $B_{0}:\mathbb{R}\rightarrow L_{2}(U,S)$ is progressively measurable,
$u_{i}\in U$, $\phi_{i}:\mathbb{R}\rightarrow\mathbb{R}$ and there exist  constants $C_{1,i}>0$
such that $|\phi_{i}(t)|\leq C_{1,i}$ for all $t\in\R$, $i=1,2,...,N$.
Suppose that there exists a constant $\hat{M}>0$ such that $\|B_{0}(t)\|^{2}_{L_{2}(U,S)}\leq \hat{M}$
for all $t\in\R$ and
$$\lambda_{*}-C_1-\frac{N+1}{2}\sum\limits_{i=1}^{N}C_{1,i}^{2}\|u_{i}\|_{U}^{2}>0.$$
Then there exists a unique $L^{2}$-bounded solution $X(\cdot)$, which is globally
asymptotically stable in square-mean sense. Furthermore, if
 $\phi$, $\phi_{i}$, $i=1,2,...,N$ and $B_{0}$ are $T$-periodic (respectively, almost periodic,
almost automorphic), then the $L^{2}$-bounded solution $X(\cdot)$ is $T$-periodic
(respectively, almost periodic, almost automorphic) in distribution. In particular,
$X(\cdot)$ is stationary provided $\phi$, $\phi_{i}$, $i=1,2,...,N$ and $B_0$ are independent of t.
  \item Note that when the noise is additive or of linear form as in \eqref{lmneq},
the main result of \cite{GaoP} is a special case of the above example for $p=4$.
\end{enumerate}
\end{remark}

\subsection{Stochastic porous media equations}
Let $\Lambda$ be an open bounded subset of $\mathbb{R}^{n}$, $n\in\mathbb{N}$. Consider the equation
\begin{equation}\label{spme}
  \d u=\left(\Delta(|u|^{p-2}u)+\phi(t)u\right)\d t+B(t)\d W(t),
\end{equation}
where $W(\cdot)$ is a two-sided cylindrical $Q$-Wiener process with $Q=I$ on a separable Hilbert space
$(U,\langle~,~\rangle_{U})$, $p>2$. And there exist constants $C_{1}>C_{2}>0$
such that $-C_{1}<\phi(t)<-C_{2}$ for all $t\in\R$. We define
$$V:=L^{p}(\Lambda)\subset H:=W_{0}^{-1,2}(\Lambda)\subset V^{*}.$$

\begin{theorem}

{\rm{(1)}} If $\|B(t)\|^{2}_{L_{2}(U,H)}\leq M$ for some constant $M > 0$, then there exists a unique $L^{2}$-bounded
solution $X(\cdot)$ to equation \eqref{spme}, which is globally asymptotically stable in square-mean sense.
Furthermore, $X(\cdot)$ is $T$-periodic in distribution (stationary) if $\phi$ and $B$ are T-periodic (independent of t).

{\rm{(2)}} Let $S=L^{2}(\Lambda)$. Suppose that there exists a constant $\hat{M}>0$ such that
$\|B(t)\|^{2}_{L_{2}(U,S)}\leq \hat{M}$. Then the $L^{2}$-bounded solution $X(\cdot)$ is almost periodic
(almost automorphic) in distribution provided $\phi$ and $B$ are almost periodic (almost automorphic).
\end{theorem}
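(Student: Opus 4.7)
The plan is to apply the additive-noise theorems from Sections 5 and 6---namely Theorems \ref{ANPth}, \ref{ANAPth}, and \ref{gasms}---by verifying the structural hypotheses in the Gelfand triple $V = L^{p}(\Lambda) \subset H = W_{0}^{-1,2}(\Lambda) \subset V^{*}$. The standing pairing formula (cf.\ \cite{PR}) is
\[
{}_{V^{*}}\langle A(t,u), w\rangle_{V} = -\int_{\Lambda} |u|^{p-2} u \cdot w\,\d\xi + \phi(t) \langle u, w\rangle_{H}, \qquad u, w \in V,
\]
obtained from the identification of $\Delta$ on the porous-media Gelfand triple.

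Verification of {\rm(H1)}--{\rm(H4)} is routine. Hemicontinuity follows from dominated convergence applied to $s\mapsto|s|^{p-2}s$, with dominating function $(|u|+|v|)^{p-1}|w|\in L^{1}(\Lambda)$. The classical monotonicity $(|u|^{p-2}u-|v|^{p-2}v)(u-v)\ge 0$ combined with $\phi(t)\le -C_{2}$ yields
\[
2\,{}_{V^{*}}\langle A(t,u)-A(t,v), u-v\rangle_{V} \le -2C_{2}\|u-v\|_{H}^{2},
\]
so since $B$ is additive, {\rm(H2$'$)} (and a fortiori {\rm(H2$''$)} with $r=2$) holds with $\lambda=2C_{2}$. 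Coercivity follows from ${}_{V^{*}}\langle A(t,v),v\rangle_{V} = -\|v\|_{L^{p}}^{p}+\phi(t)\|v\|_{H}^{2} \le -\|v\|_{V}^{p}$, giving {\rm(H3)} with $\alpha=p$ and $c_{1}=0$. Boundedness $\|A(t,v)\|_{V^{*}} \le \|v\|_{V}^{p-1} + C_{1}\|v\|_{V}$ follows from H\"older and yields {\rm(H4)} after Young's inequality absorbs the lower-order term. Together with $\|B(t)\|_{L_{2}(U,H)}^{2}\le M$, part (1) then follows from Theorems \ref{ANPth} and \ref{gasms}.

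For part (2) one must additionally verify {\rm(H5)} with $S=L^{2}(\Lambda)$. The chain $V=L^{p}\subset S=L^{2}\subset H=W_{0}^{-1,2}$ is continuous and the second inclusion is compact by Rellich--Kondrachov duality. I would take $T_{n}:=(-\Delta)(I-\Delta/n)^{-1}$, the Yosida approximation of the Dirichlet Laplacian. Functional calculus in the eigenbasis of $-\Delta$ with eigenvalues $\lambda_{j}$ shows that $T_{n}$ is positive-definite self-adjoint on $H$ with eigenvalues $n\lambda_{j}/(n+\lambda_{j})$, and
\[
\|x\|_{n}^{2} = \langle x, T_{n}x\rangle_{H} = \sum_{j} \frac{n}{n+\lambda_{j}}\,x_{j}^{2} \;\uparrow\; \sum_{j} x_{j}^{2} = \|x\|_{S}^{2},
\]
with each $\|\cdot\|_{n}$ equivalent to $\|\cdot\|_{H}$. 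Continuity of $T_{n}:V\to V$ reduces to $L^{p}$-contractivity of the resolvent, which follows from $(I-\Delta/n)^{-1}=\int_{0}^{\infty}e^{-t}P_{t/n}\,\d t$ and the contractivity of the heat semigroup on $L^{p}$.

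The main obstacle will be the dissipativity estimate in {\rm(H5)}. Writing $T_{n}v = n(v-w)$ with $w:=(I-\Delta/n)^{-1}v$, one computes
\[
{}_{V^{*}}\langle \Delta(|v|^{p-2}v), T_{n}v\rangle_{V} = -n\|v\|_{L^{p}}^{p} + n\int_{\Lambda}|v|^{p-2}v\cdot w\,\d\xi,
\]
and H\"older combined with $\|w\|_{L^{p}}\le\|v\|_{L^{p}}$ shows the right-hand side is nonpositive. Coupling this with $\phi(t)\langle v,T_{n}v\rangle_{H}\le -C_{2}\|v\|_{n}^{2}$ and $\|B(t)\|_{L_{2}(U,H_{n})}^{2}\le \|B(t)\|_{L_{2}(U,S)}^{2}\le \hat{M}$ (since $\|\cdot\|_{n}\le\|\cdot\|_{S}$) gives {\rm(H5)} with $C=2C_{2}$ and $M_{0}=\hat{M}$, from which part (2) follows via Theorem \ref{ANAPth}. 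The technical crux lies in this last nonpositivity computation, which rests essentially on the $L^{p}$-contractivity of the Dirichlet resolvent.
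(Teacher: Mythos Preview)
Your proof is correct, and the verification of (H5) via the Yosida approximation is essentially identical to the paper's. The genuine difference lies in which monotonicity hypothesis you target. The paper uses the quantitative inequality $(|a|^{p-2}a-|b|^{p-2}b)(a-b)\ge 2^{2-p}|a-b|^{p}$ to obtain
\[
2\,{}_{V^{*}}\langle A(t,u)-A(t,v),u-v\rangle_{V}\le -2^{3-p}\|u-v\|_{H}^{p},
\]
i.e.\ (H2$''$) with $r=p>2$ and $\lambda=2^{3-p}$---this is precisely the regime for which Section~5 was written. You instead keep only the sign of the nonlinear term and extract (H2$'$) (equivalently (H2$''$) with $r=2$) directly from the damping $\phi(t)\le -C_{2}$. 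Your route is shorter and avoids the sharp $p$-monotonicity constant, but it depends essentially on the strict negativity of $\phi$; the paper's route would still yield (H2$''$) if $\phi$ were merely nonpositive, and it illustrates the point of Section~5 that for porous-media type operators the exponent $r>2$ arises naturally. Since you land on (H2$'$), you could equally well invoke Theorems~\ref{Pth}, \ref{APth}, \ref{AAth} and \ref{gasms} from Sections~4 and~6 rather than the additive-noise Theorems~\ref{ANPth}, \ref{ANAPth}.

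One minor discrepancy worth flagging: the paper defines the pairing of the linear part via the $L^{2}$ inner product, $\phi(t)\int_{\Lambda}u\,w\,\d\xi$, whereas you use the $H$-inner product $\phi(t)\langle u,w\rangle_{H}$. These are genuinely different (since $H=W_{0}^{-1,2}$), and they correspond to slightly different operators $A$; nevertheless, because $\|\cdot\|_{L^{2}}$ dominates $\|\cdot\|_{H}$ on this triple, either convention leads to the required estimates for (H2$'$), (H3), and (H5), so the argument is unaffected.
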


\begin{proof}
Fix $u\in V$, $t\in\R$, for all $v\in V$ we denote
\begin{equation*}
  ~_{V^{*}}\langle A(t,u),v\rangle_{V}:=-\int_{\Lambda}u(\xi)|u(\xi)|^{p-2}v(\xi)\d\xi
  +\int_{\Lambda}\phi(t) u(\xi)v(\xi)\d\xi.
\end{equation*}
We first show that $A:\R\times V\rightarrow V^{*}$ is well-defined.
Indeed, for all $u$, $v\in V$, $t\in\R$
\begin{align*}
   & \left|_{V^{*}}\langle A(t,u),v\rangle_{V}\right| \\
   & \leq\int_{\Lambda}|u(\xi)|^{p-1}|v(\xi)|\d\xi+C_{1}\int_{\Lambda}|u(\xi)||v(\xi)|\d\xi\\
   & \leq\left(\int_{\Lambda}|u(\xi)|^{p}\d\xi\right)^{\frac{p-1}{p}}
     \left(\int_{\Lambda}|v(\xi)|^{p}\d\xi\right)^{\frac{1}{p}}
     +C_{1}\left(\int_{\Lambda}|u(\xi)|^{\frac{p}{p-1}}\d\xi\right)^{\frac{p-1}{p}}
     \left(\int_{\Lambda}|v(\xi)|^{p}\d\xi\right)^{\frac{1}{p}}\\
   & \leq\|u\|_{L^{p}}^{p-1}\|v\|_{L^{p}}+C_{1}\|u\|_{L^{\frac{p}{p-1}}}\|v\|_{L^{p}}\\
   & \leq \|u\|_{L^{p}}^{p-1}\|v\|_{L^{p}}+C_{1}\left(\frac{1}{p-1}\|u\|_{L^{p}}^{p-1}
     +\frac{p-2}{p-1}\left(|\Lambda|\right)^{\frac{p-1}{p}}\right)\|v\|_{L^{p}}.
\end{align*}
Therefore, $A:\R\times V\rightarrow V^{*}$ is well-defined and we have
\begin{equation}\label{AP3ine1}
  \|A(t,u)\|_{V^{*}}\leq\left(1+C_{1}\frac{1}{p-1}\right)\|u\|_{L^{p}}^{p-1}
  +\frac{p-2}{p-1}\left(|\Lambda|\right)^{\frac{p-1}{p}}C_{1}.
\end{equation}

Next we verify assertions (1) and (2).

(1) It suffices to show that (H1), (H2$''$), (H3) and (H4) hold.

(H1) follows immediately from \eqref{appien1}.

(H2$''$) For all $u$, $v\in V$, $t\in\R$ we have
\begin{align*}
   & ~_{V^{*}}\langle A(t,u)-A(t,v),u-v\rangle_{V} \\
   & =-\langle u|u|^{p-2}-v|v|^{p-2},u-v\rangle_{L^{2}}+\phi(t)\|u-v\|_{L^{2}}^{2}\\
   & \leq-2^{2-p}\|u-v\|_{L^{p}}^{p}+\phi(t)\|u-v\|_{L^{2}}^{2}\\
   & \leq-2^{2-p}\|u-v\|_{H}^{p}.
\end{align*}
Therefore, (H2$''$) holds with $r=p$, $\lambda=2^{3-p}$.

(H3)  Note that for all $u\in V$, $t\in\R$
\begin{equation*}
  ~_{V^{*}}\langle A(t,u),u\rangle_{V}=-\int_{\Lambda}u(\xi)|u(\xi)|^{p-2}u(\xi)\d\xi
  +\phi(t)\int_{\Lambda}u(\xi)u(\xi)\d\xi\leq-\|u\|_{L^{p}}^{p},
\end{equation*}
so we have
\begin{equation*}
  2_{V^{*}}\langle A(t,u),u\rangle_{V}+\|B(t)\|_{L_{2}(U,H)}^{2}\leq-2\|u\|_{L^{p}}^{p}+M.
\end{equation*}
That is, (H3) holds with $\alpha=p$.

(H4) holds by (\ref{AP3ine1}) with $\alpha=p$.

(2) Like the proof of Theorem \ref{app1}, it suffices to verify (H5). Let $S=L^{2}(\Lambda)$ and $\Delta$ be the Laplace operator
on $L^{2}(\Lambda)$ with the Dirichlet boundary condition. We define
$T_{n}=-\Delta\left(I-\frac{\Delta}{n}\right)^{-1}=n\left(I-(I-\frac{\Delta}{n})^{-1}\right)$.
The continuity of $T_{n}$ on $L^{p}(\Lambda)$, $p>1$ was already shown in the proof of Theorem \ref{app1}.
Then recalling the contractivity of $\{P_{t}\}_{t\geq0}$ on $L^{p}(\Lambda)$, $p>1$, we obtain
\begin{align*}
   & ~_{V^{*}}\langle \Delta(u|u|^{p-2})+\phi(t) u,-\Delta(I-\frac{\Delta}{n})^{-1}u\rangle_{V} \\
   & =-\langle u|u|^{p-2},nu-n(I-\frac{\Delta}{n})^{-1}u\rangle_{L^{2}}+\phi(t)\|u\|_{n}^{2}\\
¡¡ & =-\langle u|u|^{p-2},nu-n\int_{0}^{\infty}{\rm{e}}^{-t}P_{\frac{t}{n}}u(\xi)\d t\rangle_{L^{2}}
     +\phi(t)\|u\|_{n}^{2}\\
   & =-n\int_{0}^{\infty}{\rm{e}}^{-t}\left(\int_{\Lambda}|u(\xi)|^{p}\d\xi
     -\int_{\Lambda}|u(\xi)|^{p-2}u(\xi)\cdot P_{\frac{t}{n}}u(\xi)\d\xi\right)\d t
     +\phi(t)\|u\|_{n}^{2}\\
   & \leq -C_{2}\|u\|_{n}^{2}.
\end{align*}
Then we have
\begin{equation*}
  2_{V^{*}}\langle A(t,u),T_{n}u\rangle_{V}+\|B(t)\|^{2}_{L_{2}(U,H_{n})}\leq
  -2C_{2}\|u\|_{n}^{2}+\hat{M}.
\end{equation*}
That is, (H5) holds.

The proof is complete.
\end{proof}

\section*{Acknowledgements}
This work is partially supported by NSFC Grants 11522104, 11871132, and the startup and
Xinghai Jieqing funds from Dalian University of Technology.

\end{document}